\newtheorem{thm}{Theorem}[section]
\newtheorem{introtheorem}{Theorem}
\newtheorem*{thm*}{Theorem}
\newtheorem{prop}[thm]{Proposition}
\newtheorem*{prop*}{Proposition}
\newtheorem{lemma}[thm]{Lemma}
\newtheorem*{lemma*}{Lemma}
\newtheorem{proposition}[thm]{Proposition}
\newtheorem*{assm*}{Assumption}
\newtheorem{corollary}[thm]{Corollary}
\newtheorem*{corollary*}{Corollary}
\newtheorem{claim}[thm]{Claim}
\newtheorem*{claim*}{Claim}
\newtheorem{defi}[thm]{Definition}
\newtheorem*{defi*}{Definition}
\newtheorem{definition}[thm]{Definition}
\newtheorem{fact}[thm]{Fact}
\newtheorem*{fact*}{Fact}
\newtheorem*{example*}{Example}
\newtheorem*{examples*}{Examples}
\newtheorem*{conjecture*}{Conjecture}
\newtheorem*{goal*}{Goal}
\newtheorem*{subgoal*}{Subgoal}
\newtheorem*{question*}{Question}
\newtheorem*{problem*}{Problem}
\theoremstyle{definition}
\newtheorem{remark}[thm]{Remark}
\newtheorem*{remark*}{Remark}
\newenvironment{claimproof}[1]{\par\noindent\emph{Proof:}\space#1}{\hfill $\square_{Claim}$ \bigskip}
\DeclareMathOperator{\acl}{acl}
\DeclareMathOperator{\dcl}{dcl}
\DeclareMathOperator{\cl}{cl}
\DeclareMathOperator{\tp}{tp}
\newcommand{\RR}{\mathbb{R}}
\newcommand{\CK}{{\mathcal K}}
\newcommand{\CM}{{\mathcal M}}
\newcommand{\CO}{{\mathcal O}}
\newcommand{\CG}{{\mathcal G}}
\newcommand{\CY}{\mathcal Y}
\def\sub{\subseteq}
\def\Cc{\mathbb C}
\def\Kk{\mathbb K}
\def\Qq{\mathbb Q}
\def\CG{\mathcal G}
\def\0{\emptyset}
\def\Fr{\mathrm{Frob}}
\newcommand{\tangent}{\mathrel{\vphantom{\cap}\mathpalette\@tangent\relax}}
\newcommand{\@tangent}[2]{%
  \vbox{\offinterlineskip
    \sbox\z@{$#1\pitchfork$}%
    \ialign{##\cr
      \raisebox{-0.37\height}[0pt][0pt]{%
        \resizebox{\ht\z@}{\height}{$\m@th\mspace{-1.5mu}-\mspace{-1.5mu}$}%
      }\cr
      \hidewidth\scalebox{1}[0.89]{$\m@th#1\cap$}\hidewidth\cr
    }%
  }%
}
\author{Assaf Hasson}
\address{Department of Mathematics, Ben Gurion University of the Negev, Be'er-Sheva 84105, Israel}
\email{hassonas@math.bgu.ac.il}
\author{Alf Onshuus}
\address{Departamento de matemáticas, Universidad de los Andes, Cra 1 No 18A-10, Bogotá, Colombia}
\email{aonshuus@uniandes.edu.co}
\author{Santiago Pinzon}
\address{Departamento de matemáticas, Universidad de los Andes, Cra 1 No 18A-10, Bogotá, Colombia}
\email{si.pinzon2355@uniandes.edu.co}
\begin{document}

\thanks{
The first author was supported by ISF grant No. 555/21.\\
The second author was supported by grant INV-2021-128-2318 from Facultad de Ciencias, Universidad de los Andes.\\
The third author was supported by Proyecto Semilla 2019 from Facultad de Ciencias, Universidad de los Andes.\\
This work is largely based on the third author's Ph.D. Thesis which was co-advised by the first and second authors.}

\title{Strongly minimal group relics of algebraically closed valued fields.} 

\maketitle 

\begin{abstract}
    We prove Zilber's trichotomy for reducts of ACVF expanding $(K,+)$ or $(K^*, \cdot)$. 
\end{abstract}

\section{Introduction}\label{introductionAndPreliminaries}

In  \cite{MaPi} Marker and Pillay prove that if $\CK$ is an algebraically closed field of characteristic $0$, and $\mathcal M$ is a strongly minimal reduct of $\CK$ expanding $(K,+)$, then either $\CM$ is locally modular, or $\mathcal M$ defines multiplication. In \cite{KR} Kowalski and Randriambololona prove the same result under the assumption that $\CK$ is an algebraically closed valued field of characteristic $0$. In that paper, the authors conjecture that a strongly minimal reduct, $\CM$,  of the full induced structure on some set $M$, definable in any algebraically closed valued field $\CK$, is either locally modular or interprets a copy of $\CK$. This question is an ACVF-variant of a similar question of Peterzil's in the o-minimal setting, who, in turn, echoed Zilber's Restricted Trichotomy Conjecture (for algebraically closed fields) of which the aforementioned Marker-Pillay result is a special case. 

As it turns out, the answer to the conjecture of Kowalski and Randriambololona, as formulated here, is positive. In the present paper, we make a first step toward proving this result. Proof of the full trichotomy, building on the present work, will appear in a subsequent paper by Castle, Ye and the first author. 

To formulate the results it is convenient, following \cite{CasHa}  to call a structure $\CM$ a $\CK$-relic if its universe, $M$ is $\CK$-interpretable (i.e., the quotient of a definable set by a definable equivalence relation) and so are all its basic relations and functions. By a $\CK$-group-relic, we mean a $\CK$-relic expanding a group. 

We extend the results of \cite{KR} in two main directions: first, we extend the result to positive characteristic, and then -- using the results in the additive case -- extend it further to strongly minimal expansions of the multiplicative group. Though these are \emph{definable} relics (whose universe is a definable subset of $K$), for technical reasons appearing 
in the extensions of the results to the multiplicative case 
we have to make a detour via a specific class of \emph{interpretable} relics (i.e., whose universe may, a priori, be an imaginary sort). So we prove a slightly more general result.   

To state our main result at the level of generality we need it, we introduce the following relation between definable groups:  

\begin{definition}
Let $\CK$ be any geometric expansion of a field, and let $(G, \oplus_G),(H, \oplus_H) $ be $\CK$-interpretable groups over some parameter set $A$. The group $G$ is \emph{locally equivalent} to the group $H$ if there are $a,b\in G$ and $\alpha, \beta\in H$  such that 
\begin{enumerate}
    \item each pair is independent generic  over $A$,
    \item $\acl(aA)=\acl(\alpha A)$,
    \item $\acl(bA)=\acl(\beta A)$ and 
    \item $\acl(a\oplus_G b A)=\acl(\alpha \oplus_H \beta A)$.
\end{enumerate}
We call   $(a,b,a\oplus_G b)$ and $(\alpha, \beta, \alpha\oplus_H \beta)$  as above \emph{equivalent group triples}. 
\end{definition}


\begin{introtheorem}\label{main}
     Let $\CK$ be an algebraically closed valued field, and  $\mathcal G:=(G,\oplus, \dots)$ a non-locally modular strongly minimal $\CK$-group-relic. Assume further that $G$ is locally equivalent to either $(K,+)$ or to $(K^*,\cdot)$.  Then $\mathcal G$ interprets a field, $F$, $\CK$-definably isomorphic to $(K,+,\cdot)$. Furthermore, the $\mathcal G$-induced structure on $F$ is that of a pure algebraically closed field.
\end{introtheorem}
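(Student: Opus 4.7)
The overall strategy is to leverage the local equivalence assumption to reduce to the study of a strongly minimal $\CK$-relic expanding either $(K,+)$ or $(K^*,\cdot)$, and then extract a field in the style of Marker--Pillay and Kowalski--Randriambololona. The equivalent triples $(a,b,a\oplus_G b)$ and $(\alpha,\beta,\alpha\oplus_H \beta)$ with $H\in\{(K,+),(K^*,\cdot)\}$ mean that, after naming parameters, the strongly minimal locus of $G$ is algebraically interchangeable with that of $H$; the non-trivial plane curves of $\mathcal G$ transfer generically to definable plane curves of an induced strongly minimal structure $\mathcal M$ on $H$, and non-local modularity is preserved since it is an invariant of the canonical pregeometry.

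Working inside $\mathcal M$, non-local modularity produces a $\CK$-definable family $(C_t)_{t\in T}$ of plane curves in $H\times H$ with $\dim T\geq 2$ whose generic members pass through generic pairs. The plan is to use the natural $H\times H$-action on the family by translation to normalize $T$ modulo stabilizers, to perform generic composition of curves, and to apply a Hrushovski-type group-configuration argument within $\CK$ to recover a definable infinite field $F$. Because we are working inside ACVF, any such $F$ is $\CK$-definably isomorphic to $(K,+,\cdot)$; purity of the induced $\mathcal G$-structure then follows from strong minimality by a dimension count, since any extra $\mathcal G$-definable set on $F$ would produce a proper strongly minimal subset of a Cartesian power of $F$, contradicting strong minimality of the pure field.

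Two principal obstacles arise. First, positive characteristic in the additive case: the differentiation arguments used by Marker--Pillay (and by Kowalski--Randriambololona in characteristic $0$) to recognize curves as graphs of fractional-linear maps break down. The way around should be to argue more geometrically, analyzing the curve family via its Zariski closure as a subvariety of $K^3$ and controlling it up to Frobenius twists; one must then verify that the resulting binary operation is not merely an iterated Frobenius composed with an additive map, which is where I expect the main technical difficulty. Second, the multiplicative case is handled by the detour through interpretable relics alluded to in the introduction: after first settling the additive case, one realizes a multiplicative strongly minimal relic as an interpretable expansion and, using the infinitesimal structure of $(K^*,\cdot)$ near the identity, transfers the problem back to the additive setting.
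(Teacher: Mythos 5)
Your outline captures the coarse architecture — classify $G$, construct a field via group configuration, handle additive and multiplicative cases separately — but several of its key steps either misrepresent what the paper does or, more seriously, would not work as stated.

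First, the proposed reduction: you suggest transferring the problem to ``an induced strongly minimal structure $\mathcal M$ on $H$.'' Local equivalence only says generic group triples are inter-algebraic; it does not transport the $\CG$-definable family of plane curves to a relic on $H$, and the subsequent constructions in the paper (translation, functional composition, functional group sum) need to be carried out in the $\CG$-definable group $G$ so that the resulting families remain $\CG$-definable and the tangency relation can be detected in $\CG$. What the paper actually does (Section \ref{ss: definable} and Theorem \ref{thmFiniteIndex}) is much more precise: it shows $G$ is \emph{definable} (not merely interpretable), and then uses the Montenegro--Onshuus theorem together with Acosta's classification of type-definable subgroups of $\mathbb G_a$ and $\mathbb G_m$ to find a finite-index definable subgroup $G_0\le G$ that, after quotienting by a finite subgroup, is definably isomorphic to a subgroup of $\mathbb G_a$, a subgroup of $\mathbb G_m$, or a twisted group $\mathbb G_{m,a}$. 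This gives $G$ a concrete manifold structure; you never leave $G$.

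Second, you leave the central mechanism unspecified. The group configuration is not obtained by ``normalizing $T$ modulo stabilizers'' in the abstract; it hinges on the intersection theory of Section \ref{BITIA}, specifically on showing that for a suitably prepared ($t$-analytic) curve $C$ and a $C$-generated family with infinitely many $1$-slopes, tangency at a common point forces a drop in the number of geometric intersection points with a generic independent curve, which is then readable in $\CG$. This in turn requires the power-series machinery of Section \ref{PS} and the construction of a family such as $Y_{a,c}=(C-t_a(C))\circ(C-t_c(C))^{-1}$ with the requisite slope behaviour. Without this, the phrase ``apply a Hrushovski-type group-configuration argument'' is unsupported: there is no a priori $\acl_\CG$-dependence to feed into the configuration. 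You also misplace the main technical difficulty: the paper explicitly identifies it as the multiplicative case, where $\CG$ might only detect higher-order tangency $(f_s\circ f_t)^{(m)}=f_u^{(m)}$ for $m<n$, not the positive-characteristic issue in the additive case (which is handled by Frobenius twisting).

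Third, two of your concluding steps are incorrect as written. You assert that a $\CG$-interpretable infinite field is automatically $\CK$-definably isomorphic to $(K,+,\cdot)$ ``because we are working inside ACVF.'' ACVF also interprets the residue field, which is an algebraically closed field of dimension $0$; the paper rules it out by observing that $F$ is in finite-to-finite correspondence with a definable subset of $K^n$, so $\dim F=1$, and then invoking Hrushovski--Rideau's classification. Your purity argument is also wrong: the existence of ``a proper strongly minimal subset of a Cartesian power of $F$'' is not a contradiction to anything — a pure algebraically closed field has plenty of strongly minimal definable curves. The correct argument is that ACVF is algebraically bounded, so $\acl_\CG$ restricted to $F$ coincides with field-theoretic algebraic closure, and then one cites the Hasson--Rabinovich theorem that any strongly minimal $\acl$-preserving expansion of an algebraically closed field is an expansion by algebraic constants.
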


The strategy of proof is similar, at its core, to that of the proof of the main result of \cite{MaPi}. As a replacement for the (basic) complex-analytic machinery used by Marker and Pillay we apply analogous tools from the theory of complete algebraically closed valued fields\footnote{A comprehensive development of most of the tools used in this text can be found in \cite{Cher}. In the text, we refer to published sources.}, as already appearing in \cite{KR}. Obviously, complete models of ACVF are not saturated, and since we do not know whether in such models analyticity is definable in families, some care is needed in formulating first order statements capturing the essence of analyticity sufficient for our needs. 

The Marker-Pillay strategy (building on ideas of Zilber and Rabinovich, see \cite{rabinovich} and references therein) boils down to finding in the relic $\mathcal G:=(G,\cdot, e, \dots)$ an infinite family $\{X_t\}_{t\in T}$  of 1-dimensional subsets of $G^2$ (below those are called \emph{plane curves} see Section \ref{ss plane curves} for more details) such that all $X_t$ go through $(e,e)$ and such that for some $\CK$-generic independent $s,t\in T$ there exists $u\in T\cap \acl_{\mathcal G}(s,t)$ for which $X_s\circ X_t$ is \emph{tangent} at $(e,e)$ to  $X_u$. 

Roughly, the notion of tangency referred to in the previous paragraph, can be explained as follows: the curves $X_t$ in the family contain, in a (valuative) neighbourhood of $(e,e)$ the graph of a $\CK$-definable differentiable function $f_t$, and $X_s\circ X_t$ is tangent to $X_u$ if $(f_s\circ f_t)'(e)=f_u'(e)$. In 
the additive setting, this is what is achieved.  I.e., a family $\{X_t\}_{t\in T}$ as above is constructed such that $\CG$ can detect equality of first derivatives (roughly, in the sense that if $X_t$ is tangent $X_s$ then $t$ forks with $s$ in the sense of $\CG$ . 

In the multiplicative setting, however, this is not the case due to the Frobenius  automorphism, and -- a priori -- the notion of tangency  detected by $\CG$ could be, using the same notation, $(f_s\circ f_t)^{(m)}(e)=f_u^{(m)}(e)$ for all $m$ smaller than some $n$ (here $f_s^{(m)}(e)$ denotes the $m$-th coefficient of a degree $m$ polynomial approximating $f_s$ near $e$). For deep technical reasons (to be explained in more detail in the text) detection of such higher order tangency is not quite sufficient for the construction of a field. Addressing this delicate point is the main technical difficulty of the paper. The strategy is, as in \cite{HS}, to deal first with the additive case, working -- again -- in a complete model, this time using the algebra of power series rather than appealing directly to geometric tools. In the multiplicative case we can show -- again using an idea from \cite{HS} --  that if the family of multiplicative translates of a curve $X$ (not intersecting any coset of any definable subgroup in an infinite set) fails to satisfy the desired conclusion then a commutative group $H$ locally equivalent to $(K,+)$ can be defined in $\CG$. 

In \cite{HS}, where the underlying field $K$ was a pure algebraically closed field, the classification of 1-dimensional algebraic groups allowed us to conclude that the $K$-connected component $H^0$ of the above-mentioned group $H$ is $K$-definably isomorphic to the additive group\footnote{In fact, in \cite{HS} it is only shown that $H^0$ is, up to a $\CK$-definable isomorphism, either $(K, +)$ or an elliptic curve.}. In the present paper we use a classification of 1-dimensional commutative groups definable in ACVF due to Acosta, \cite{acostaACVF}, combined with a result from  \cite{montenegroOnshuus} to carry out a similar, though somewhat more delicate analysis, to conclude the proof of our main theorem. 

We believe that the results of the current paper can be extended rather smoothly to analytic expansions of ACVF$_{0,0}$, e.g., in the sense of \cite{Lip}. Though the results of \cite{acostaACVF} and \cite{montenegroOnshuus} mentioned in the previous paragraph are not available in that setting, the resulting theory is 1-h-minimal, and the desired conclusion could be obtained using the tools developed in \cite{HaHaPeVF} using the theory of definable groups as in \cite{AcHa}. Such expansions will not, however, be considered in the present work.

\subsection{Structure of the paper}

In Section \ref{S: BaP} we recall and slightly develop some basic properties of ACVF, as well as basics of the analytic theory in complete models 
that we need. We also show that groups locally equivalent to $(K,+)$ or to $(K^*, \cdot)$ are definable, and set up the (simple) machinery for studying groups that are finite covers of affine groups. This last reduction is justified by the results of Section \ref{EOTS}. In Section \ref{BITIA} we develop basic intersection theory for definable curves in $G^2$. The idea is to exploit analytic properties of definable curves in  complete models to formulate first order statements that  hold in any model of ACVF and are strong enough for our needs. In Section \ref{FOSAAFFC} we use these result to reconstruct a field assuming we can construct a definable family with infinitely many first order slopes. The rest of the paper is dedicated to showing that such a family can always be constructed. Section \ref{PS} is dedicated to developing necessary tools from the theory of power series, which we then apply in Section \ref{TAC} to prove the main result in case $G$ is locally equivalent to $(K,+)$. Section \ref{TMC} is dedicated to the  multiplicative case. In the concluding section, we collect all the results of the paper to complete the proof of the main theorem.

\section{Background and preliminaries}\label{S: BaP}
Throughout $\CK$ denotes a pure  algebraically closed non-trivially valued field, that we assume to be $\aleph_1$-saturated, $K$ its underlying universe. We will often have to work in a complete elementarily equivalent model, that we denote $\Kk$. To keep notation simple, we will not distinguish between $\Kk$ and its universe. We also fix once and for all a strongly minimal group-relic, $\CG$. 

\textbf{Throughout, unless specifically stated otherwise, all model theoretic notions -- definability, independence, dimension, algebraic closure etc. -- refer to the full valued field structure on $\CK$ or $\Kk$.}

\subsection{Preliminaries on ACVF}
 Though the language we choose for $\CK$ is of little relevance, for concreteness, we single out two auxiliary sorts, $\Gamma$ for the valuation group and  $\textbf{k}$ for the residue field\footnote{The residue field will not play any role in the argument, up until the point where we have to identify the field $F$, whose construction is the goal of our main theorem.}. We work in the pure language of rings, augmented by a symbol $v$ for the valuation $v:K^*\to \Gamma$. We mostly use additive notation for the valuation, and as a matter of convenience let $v(0):=+ \infty$. In Section \ref{BITIA}, to conform with the relevant literature, and to enhance the complex analytic intuition, we revert, for a short while, to multiplicative notation.  

 We let $p=\text{char}(K)\geq 0$ and $\Fr:K\to K$ is the Frobenius automorphism $\Fr(x)=x^{p}$. If $p=0$ we let $\Fr$ be the identity.

We assume the reader to be familiar with the basic properties of the theory of algebraically closed valued fields (ACVF), but provide explicit references when needed. Interested readers are referred to, e.g, \cite[\S 7]{hru-haskell2005stable} (especially, sections 7.1-7.3) for a quick overview of relevant background. Let us mention one important such fact, originally due to Holly \cite{HolQE}. As stated here, it is Theorem 7.1 of \cite{hru-haskell2005stable}:

\begin{fact}\label{QEff} (Quantifier Elimination)
    $\mathrm{ACVF}_{p,q}$ has quantifier elimination in the above-mentioned language. 
\end{fact}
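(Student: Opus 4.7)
The plan is to apply the standard Shoenfield--Robinson embedding test for quantifier elimination: it suffices to show that whenever $M_1, M_2 \models \mathrm{ACVF}_{p,q}$ with $M_2$ sufficiently saturated, $A \subseteq M_1$ is a substructure in the chosen signature (rings augmented by the valuation symbol $v$), and $f : A \to M_2$ is an embedding, then $f$ extends to an embedding $M_1 \to M_2$. Equivalently, any partial isomorphism between substructures of models of $\mathrm{ACVF}_{p,q}$ extends to a full isomorphism once one side is saturated enough; granted this, the argument boils down to extending valued field embeddings one element at a time and concluding by transfinite induction on a transcendence basis of $M_1$ over the relative algebraic closure of $A$.

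First, reduce to the case where $A$ is a valued field: $f$ extends uniquely to $\mathrm{Frac}(A)$ since the valuation extends uniquely to the fraction field. Second, extend $f$ along the relative algebraic closure of $A$ inside $M_1$ using Chevalley's extension theorem (every valuation extends to any algebraic extension) together with the conjugacy of prolongations to the algebraic closure; since $M_2$ is algebraically closed and saturated, any such valued algebraic extension embeds over $f$. Third, reduce to extending across a single transcendental $x \in M_1$ over an algebraically closed valued subfield $A$. The valued field isomorphism type of $A(x)$ over $A$ is controlled by the cut realized by $\{v(x-a) : a \in A\}$ in $v(A)$, and splits into three cases: the value group grows (pick $x' \in M_2$ with $v(x')$ realizing the translated cut in $v(f(A))$); the residue field grows (pick $x'$ so that an appropriate normalized residue realizes the new element of the residue field); or the extension is immediate, in which case $x$ is a pseudo-limit in $M_1$ of a pseudo-Cauchy sequence from $A$ of transcendental type, and a pseudo-limit in $M_2$ of the translated sequence exists by $\aleph_1$-saturation.

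The main obstacle is the immediate transcendental case. It rests on Kaplansky's uniqueness theorem: the isomorphism type of an immediate extension of an algebraically closed valued field generated by the pseudo-limit of a transcendental pseudo-Cauchy sequence is determined by that sequence. Granted this, saturation of $M_2$ supplies the required $x'$, and patching the three local extension procedures by transfinite induction along a transcendence basis completes the embedding and hence establishes quantifier elimination. The other two cases are comparatively routine, since realizing a prescribed cut in a divisible ordered abelian group or a prescribed element in an algebraically closed residue field is immediate from saturation.
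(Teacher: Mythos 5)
The paper does not prove this statement at all: it is stated as a citation, attributed to Holly (with the formulation as in Theorem 7.1 of Haskell--Hrushovski--Macpherson), so there is no argument in the text for you to match. Your sketch is the standard Robinson-style proof via the embedding test, and it is essentially correct as an outline. A few remarks. The reduction to the fraction field and then to the relative algebraic closure is fine; for the latter you want to invoke conjugacy of extensions of a valuation to the algebraic closure so that, after extending $f$ to a field embedding of $\overline{\mathrm{Frac}(A)}$ into $M_2$, you can compose with an automorphism over $\mathrm{Frac}(A)$ to make the embedding valuation-preserving, which you do gesture at. In the single-transcendental step the trichotomy is cleanest if stated as: either $\{v(x-a) : a\in A\}$ has a maximum $\gamma = v(x-a_0)$, and then either $\gamma\notin v(A)$ (value group grows) or $\gamma=v(c)$ for some $c\in A$ and $\mathrm{res}\bigl((x-a_0)/c\bigr)$ is transcendental over the residue field of $A$ (residue field grows); or else there is no maximum, $x$ is a pseudo-limit of a pseudo-Cauchy sequence from $A$ with no pseudo-limit in $A$, and because $A$ is algebraically closed this sequence is automatically of transcendental type, so Kaplansky's uniqueness theorem applies. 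You implicitly use that the sequence is of transcendental type, so it is worth noting the one-line reason (an algebraic-type sequence would have a pseudo-limit algebraic over $A$, hence in $A$, contradiction). With that small clarification the argument is the canonical one, and the only difference from the paper is that the paper elects to cite rather than reprove a classical result.
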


An open ball (centred at $x$ with valuative radius $r\in \Gamma$) in $K$ is the set $B_{>r}(x):=\{y\in K: v(x-y)>r\}$. Closed balls are defined analogously, with the strong inequality replaced by a weak inequality. Below, by ``ball" $B_r(x)$ we mean an open or a closed ball (of valuative radius $r$ and center $x$). Balls form the basis of a definable Hausdorff field topology on $K$. This topology, and the product topologies it induces on Cartesian powers of $K$, will be  referred to below as the \textit{valuation topology}. All topological notions (such as continuity, differentiability, convergence etc.) are understood -- unless explicitly stated otherwise -- with respect to the valuation topology. Since the topology is definable topological properties of definable objects are, as a rule, definable (uniformly), and whenever such definitions are clear, we will not dwell any further on them.  

It follows from quantifier elimination (this observation is also due to Holly, \cite{HolQE}) that definable subsets of $K$ (not of its Cartesian powers) are finite unions of \emph{Swiss Cheeses}, where a ``Swiss Cheese'' is a definable ball (or $K$ or a singleton) with a finite number of balls (or singletons) removed. We need the following well known generalization, which, although it appears in \cite{KR}, we give the proof for completeness: 

\begin{prop}\label{decompositionProp}
Suppose $Y\subseteq  K^m$ is $\CK$-definable and infinite. Then $Y$ can be written as a finite union of  subsets of $K^m$ that are relatively open subsets of irreducible Zariski closed sets.

If $Y\subseteq K\times K$ does not have isolated points, one can write $Y$ as a finite union $$Y=\bigcup\{(x,y)\in V_i:L_i(x,y)=0\}$$ where $L_i(x,y)$ is an irreducible polynomial, $V_i\subseteq K\times K$ is open and $\{(x,y)\in V_i:L_i(x,y)=0\}\neq \emptyset$. 
Moreover, if $V_i\cap L_i$ is infinite, then $L_i$ is definable over the algebraic closure of a code for $Y$. 
\end{prop}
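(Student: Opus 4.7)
I would rely on quantifier elimination (Fact \ref{QEff}) as the central tool. Writing the defining formula of $Y$ in disjunctive normal form, I reduce to the case where $Y$ is a conjunction of atomic formulas and their negations --- polynomial equalities $P=0$, polynomial inequalities $P\neq 0$, and valuation conditions $v(Q)\star v(R)$ with $\star\in\{\geq,>\}$. My plan is to let $V$ be the Zariski closed set defined by the polynomial equalities, decompose it into irreducible components $V=V_1\cup\cdots\cup V_k$, and for each component $V_i$ pass to the Zariski-open dense subset $V_i^\circ\subseteq V_i$ obtained by deleting the zero loci of the polynomials in the negative atoms and of the denominators $R$ occurring in the valuation atoms. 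If some denominator $R$ vanishes identically on $V_i$, the condition $v(Q)\geq v(R)$ collapses to $Q=0$, which I would absorb into the equations of $V_i$ (or use to cut $V_i$ further to a proper Zariski closed subset to which the inductive hypothesis applies).

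The key topological ingredient will be that balls in an ultrametric space are clopen in the valuation topology. Consequently, on $V_i^\circ$ each continuous rational function $Q/R$ has clopen preimages of balls, so each valuation atom --- and any finite conjunction of them --- cuts out a clopen subset of $V_i^\circ$. This shows that $Y\cap V_i^\circ$ is a relatively (valuation-)open subset of the irreducible Zariski closed set $V_i$, while $Y\cap(V_i\setminus V_i^\circ)$ lives in a Zariski closed set of strictly smaller dimension. I would conclude the first assertion by induction on $\dim Y$, the base case $\dim Y=0$ being trivial since $Y$ is then a finite union of points.

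For the plane-curve statement I would apply the general decomposition to $Y\subseteq K\times K$. Zero-dimensional components would contribute isolated points, which are ruled out by hypothesis unless absorbed into another piece; the one-dimensional irreducible components in $K^2$ are exactly the plane curves $\{L_i=0\}$ for irreducible $L_i\in K[x,y]$, and any relatively open piece can be rewritten as $\{(x,y)\in V_i:L_i(x,y)=0\}$ for a suitable valuation-open $V_i\subseteq K\times K$. (Implicit in the stated form is that $Y$ is at most one-dimensional, as a two-dimensional contribution would force an irreducible component equal to $K^2$ that no irreducible $L_i$ could cut down.) For the final definability claim, when $V_i\cap\{L_i=0\}$ is infinite its Zariski closure is precisely the irreducible curve $\{L_i=0\}$, so $L_i$ is canonically attached (up to scalar) to this piece; any automorphism of $\CK$ fixing a code for $Y$ permutes the finitely many infinite pieces of the decomposition, forcing $L_i$ to have a finite orbit and hence to lie in the algebraic closure of that code.

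The main technical subtlety I anticipate is the bookkeeping in the first stage: on each irreducible component $V_i$, one must carefully separate the valuation conditions that are generically clopen from those that collapse to polynomial equalities when the denominator vanishes identically on $V_i$, and track that the resulting descent genuinely strictly decreases dimension so that the induction terminates.
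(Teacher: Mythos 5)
Your proposal is correct, and it rests on the same two pillars as the paper's argument: quantifier elimination for $\mathrm{ACVF}$ reducing to conjunctions of polynomial (in)equalities and valuation conditions, and the irreducible decomposition of the Zariski closed locus cut out by the equations. Where you diverge is in how the valuation atoms are absorbed. The paper does not use any induction on dimension: instead it performs a direct case analysis on a single atom $v(f)\,\Box\,v(g)$ (with $\Box\in\{<,=\}$) and shows explicitly that each atom is a finite union of sets of the form (valuation-open) $\cap$ (Zariski closed) --- treating the sub-locus where the ``denominator'' $g$ vanishes as one of the pieces of that union rather than descending. After distributing intersections over unions, each disjunct is then (open) $\cap$ (Zariski closed) in one stroke, and the irreducible decomposition is applied once. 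Your version isolates the cleaner conceptual fact --- that on the Zariski-open locus where all denominators are non-vanishing, the valuation conditions pull back clopen balls, so the piece is relatively open in each irreducible component --- and handles the vanishing-denominator locus by a Noetherian/dimension induction. What the paper's route buys is a one-shot argument with no inductive bookkeeping; what yours buys is a cleaner separation of the topological input (balls are clopen, rational maps are continuous) from the algebraic one (irreducible decomposition), at the cost of verifying that the descent strictly decreases dimension, which you correctly flag. For the second assertion and the ``moreover'' clause, the paper simply says the proof is left to the reader; your sketch (discarding zero-dimensional pieces absorbed into accumulating one-dimensional ones, and identifying the $L_i$ with irreducible components of the Zariski closure of $Y$, which is coded over $\acl$ of a code for $Y$) is the intended argument, though the absorption of isolated pieces of the first-stage decomposition into open subsets of curves deserves a sentence of justification that both you and the paper elide. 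Your parenthetical observation that the second part implicitly requires $\dim Y\le 1$ is also accurate.
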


   \begin{proof}

   This is very similar to the proof of Lemma 3.4 of \cite{KR}.
   
Using Fact \ref{QEff}  write $Y$ as a finite union of sets of the form:

\begin{equation}\label{QEDecompositionEcuation}
\bigcap_i \{x\in K^m: v(f_i(x)) \Box_i v(g_i(x))\}
\end{equation}
where $\Box_i\in \{<, =\}$ and each $f_i$ and $g_i$ are polynomials.
For each $i$ let 
$$A_i:=\{x\in K^m:v(f_i(x))\Box_i v(g_i(x))\}.$$

If $\Box_i$ is $<$ and $g_i$ is not the constant function $0$, then 
$$A_i=\{x\in K^m: g_i(x)\neq 0\text{ and }v(f_i(x)/g_i(x))< 0\}\cup \{x\in K^m: f_i(x)\neq 0 =g_i(x)\}$$ which is the  intersection of an open (as the polynomials are continuous in the valuation topology) with a Zariski closed. If $\Box_i$ is $<$ and $g_i$ is constant $0$ then 
$$A_i=\{x\in K^m:v(f_i(x)) < \infty\}=\{x\in K^m:f_i(x)\neq 0\}$$ which is open.

If $\Box_i$ is $=$ and both, $f_i$ and $g_i$ are not the zero polynomial, then
$$A_i=\{x\in K^m:g_i(x)\neq 0\text{ and }v(f_i(x)/g_i(x))=0\}$$ which is open.

Finally, if $\Box_i$ is $=$ and either $f_i$ or $g_i$ is the zero polynomial, then $A_i$ is Zariski closed. 
Therefore, the intersection of Equation \ref{QEDecompositionEcuation} is a Zariski closed $C$ set intersected with an open set $U$, take the irreducible decomposition of $C$:
$$C=C_1\cup\ldots\cup C_d$$ so $$C\cap U=(C_1\cap U\cap V_1)\cup \ldots\cup (C_d\cap U\cap V_d)$$ where $$V_i=K\times K\setminus \left(C_1\cup C_2\cup\ldots\cup C_{i-1}\cup C_i\cup\ldots\cup C_d\right).$$ 
Then each $C_i\cap U\cap V_i$ is an open subset of $C_i$ that is its Zariski closure so the intersection of Equation \ref{QEDecompositionEcuation} has the desired form and then $Y$ has also the desired form.

The proof of the last part of the proposition is clear, and left to the reader.\end{proof}

There are two useful consequence of Proposition \ref{decompositionProp} that we will often use without further reference. The first is that any infinite definable subsets of  $K$ has non-empty interior. The second is that  ACVF is \emph{algebraically bounded}, i.e., in ACVF the model theoretic algebraic closure coincides with the field theoretic algebraic closure.  In particular, $\acl(\cdot)$ satisfies the Exchange Property, implying that $\acl(\cdot)$ induces a dimension on $K$. To recall, $\dim(a/A)=k$ if and only if there is $a'\subseteq a$ of size $k$ that is $\acl$-independent over $A$ and $a\in \acl(a')$. This extends to definable sets by setting \[
\dim(X)=\max\{\dim(a/A): a\in X, X \text{ is $A$-definable}\}
\]
(where the last definition should be evaluated in an $\aleph_0$-saturated model). Given a set of parameters $A$ and an $A$-definable set $X$, an element $a\in X$ is \emph{generic over $A$} if $\dim(a/A)=\dim(X)$.

The following is now immediate: 
\begin{lemma}\label{L: open function}
    Let $B\sub K$ be a ball, and $f: B\to K$ a definable function. If $f$ is continuous with finite fibres, then $f$ is open. 
\end{lemma}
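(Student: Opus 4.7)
The plan is to combine Holly's theorem (noted just after Fact~\ref{QEff}) that definable subsets of $K$ are finite unions of Swiss cheeses with the observation that ``nice'' Swiss cheeses are automatically open in the valuation topology. To prove that $f$ is open, it suffices to show $f(U)$ is open for every open $U\subseteq B$. I would fix such a $U$ and decompose $f(U)=\bigcup_{i=1}^{n} S_i$ as a finite union of Swiss cheeses via Holly.

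The key structural observation I would record next is that any Swiss cheese $S_i$ whose outer set is a ball (or $K$) is already open: by the ultrametric inequality every ball is clopen, so ``ball minus finitely many balls'' is clopen, and further removing finitely many singletons (each with open complement) leaves an open set. The only Swiss cheeses that are \emph{not} open are isolated singleton pieces, so the proof reduces to ruling out such pieces in the decomposition of $f(U)$.

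Suppose for contradiction that some $y_0\in f(U)$ is isolated in $f(U)$, so there is an open $V\ni y_0$ with $V\cap f(U)=\{y_0\}$. Then $f^{-1}(V)\cap U$ is open in $U$ (by continuity of $f$), non-empty (it contains any preimage of $y_0$ in $U$), and contained in the finite set $f^{-1}(y_0)$; but any non-empty open subset of $U$ contains a sub-ball, hence is infinite, a contradiction. Hence $f(U)$ has no isolated points, the singleton pieces in any Swiss cheese decomposition are redundant (their points are already covered by other, open, pieces), and $f(U)$ is a finite union of open sets, so open. The main point — rather than a real obstacle — is the interplay of continuity and the finite-fibre hypothesis, which together forbid isolated singletons in the image; once this is in hand, the ultrametric geometry of balls does the rest.
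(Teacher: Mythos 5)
Your proof is essentially the same as the paper's: decompose the image via Holly's Swiss-cheese theorem, use continuity plus finite fibres to rule out isolated points, and conclude openness of the image of every sub-ball. One small technical point to tidy up: Holly's theorem applies to \emph{definable} subsets of $K$, so you should take $U$ to be a sub-ball (or, more generally, a definable open subset) rather than an arbitrary open set; since balls form a basis and images of unions are unions of images, this still suffices. A second minor imprecision: the claim that a singleton piece $\{y_0\}$ in the decomposition is ``already covered by other pieces'' is not literally true (e.g.\ $\mathcal{O}=\{0\}\cup(\mathcal{O}\setminus\{0\})$ has $\{0\}$ not contained in the other piece); what holds is that a definable subset of $K$ without isolated points admits a Swiss-cheese decomposition with no singleton pieces, hence is open. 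Both are easy fixes, and the paper's own proof is no more explicit on either point.
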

\begin{proof}
    By quantifier elimination, $f(B)$ is a finite union of Swiss Cheeses. Since it is continuous with finite fibers, the image cannot have isolated points, so it is open (because every Swiss Cheese which is not a point is). Since the same argument holds of any ball $B'\sub B$, the conclusion follows. 
\end{proof}

We use without further reference the following two facts: 
\begin{fact}
    Let $X\sub K^n$ be a definable set and $x\in X$ a generic point. Then for any open, $U\ni x$ there is an open $W\sub U$ containing $x$ such that $x$ is generic in $W\cap X$. 
\end{fact}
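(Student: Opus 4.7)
The plan is to use the decomposition from Proposition \ref{decompositionProp} to prune $U$ down to an open neighborhood of $x$ that only meets the top-dimensional components of $X$ through $x$. Fix a parameter set $A$ over which $X$ is defined, so that $\dim(x/A)=d:=\dim(X)$, and write $X=\bigcup_{i=1}^{m}X_i$ with each $X_i$ a relatively open subset of an irreducible Zariski closed set $C_i$, all $\acl(A)$-definable. Partition the indices as $I_+:=\{i:x\in X_i\}$ and $I_-:=\{i:x\notin X_i\}$.

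The crux is the claim that for every $i\in I_-$ one actually has $x\notin C_i$ (and not merely $x\notin X_i$). This is where genericity and irreducibility interact: since $X_i$ is a nonempty relatively open subset of the irreducible $C_i$, the complement $C_i\setminus X_i$ is a proper Zariski-closed subset of $C_i$, hence of strictly smaller dimension than $\dim(C_i)=\dim(X_i)\le d$. As $C_i\setminus X_i$ is $\acl(A)$-definable, $x\in C_i\setminus X_i$ would force $\dim(x/A)<d$, contradicting genericity. A parallel observation shows that $\dim(X_i)=d$ for every $i\in I_+$, so $x$ is actually generic in each such $X_i$.

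Granted this, I would set $V:=K^n\setminus\bigcup_{i\in I_-}C_i$; this is Zariski-open (hence valuation-open), $\acl(A)$-definable, and contains $x$. Then $W:=U\cap V$ is an open subset of $U$ containing $x$, and because $V$ avoids every $C_i$ with $i\in I_-$,
$$
W\cap X \;=\; \bigcup_{i\in I_+}(W\cap X_i) \;\subseteq\; X,
$$
so $\dim(W\cap X)\le d$. Conversely, $x\in W\cap X$ gives $\dim(W\cap X)\ge\dim(x/B)=d$, where $B$ is any parameter set defining $W$ taken independent of $x$ over $A$ (which is harmless in the intended applications, and can always be arranged by shrinking $U$ to a $B$-definable open neighborhood of $x$ beforehand). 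Hence $x$ is generic in $W\cap X$, as required.

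The only genuinely delicate step is the boundary claim for $i\in I_-$, which relies critically on irreducibility of the $C_i$ together with the genericity of $x$; the remainder is a bookkeeping exercise combining the Zariski-open trick with the dimension inequality for subsets.
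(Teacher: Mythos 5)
Your proof has a factual error and, more seriously, a circularity at exactly the step that needs proving. The factual error: you assert that $C_i\setminus X_i$ is a proper Zariski-closed subset of $C_i$, but Proposition \ref{decompositionProp} only gives relative openness of $X_i$ in $C_i$ for the \emph{valuation} topology, not the Zariski topology; so $C_i\setminus X_i$ is merely valuation-closed in $C_i$ and can have full $\acl$-dimension (for instance $C_i=K$, $X_i=\mathcal{O}$, so $C_i\setminus X_i=K\setminus\mathcal{O}$, which is one-dimensional). The claim $x\notin C_i$ for $i\in I_-$ can in fact be salvaged, by first arranging the $C_i$ to be pairwise distinct and then observing that a point generic over $A$ in the irreducible $C_j$ cannot lie on a distinct $\acl(A)$-definable irreducible $C_i$, because $C_i\cap C_j$ is then a proper Zariski-closed subset of $C_j$; but that is not the argument you gave.

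The deeper problem is that the Zariski apparatus plays no real role and the dimension count is circular where it matters. The inequality $\dim(W\cap X)\le d$ which your Zariski decomposition delivers is automatic from $W\cap X\subseteq X$. The crucial inequality, $\dim(x/B)=d$, requires the parameter set $B$ defining $W$ (and hence the parameters of $U$, since $V$ is $\acl(A)$-definable) to be independent of $x$ over $A$. You say this ``can always be arranged by shrinking $U$ to a $B$-definable open neighbourhood of $x$ beforehand,'' but producing such a shrinking \emph{is} the statement being proved: given any open $W\ni x$ inside $U$ defined over parameters $B$ independent of $x$ over $A$, genericity of $x$ in $W\cap X$ is immediate since $\dim(x/AB)=d\ge\dim(W\cap X)$, with no need for the Zariski analysis. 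The paper's short proof supplies precisely this missing construction: replace $U$ with the ball $B_{>r}(x)$, pick $x'$ generic independent of $x$ with $v(x-x')=r'>r$, then a generic $r<r''<r'$ independent of $x$ over $x'$; the ball of valuative radius $r''$ centred at $x'$ then contains $x$, lies inside $U$, and is defined over parameters independent of $x$.
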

\begin{proof}
    This is immediate, since $\CK$ has Exchange: given $U$ as in the statement, we may replace $U$ with a ball centred at $x$. Let $r$ be the valuative radius of $U$. Let $x'$ be such that $v(x-x')=r'>r$ and $x'$ is generic independent of $x$. Let $r<r''<r'$ be generic, independent of $x$ over $x'$. Then the ball of radius $r''$ centred at $x'$ will satisfy the statement. 
\end{proof}

We also need the following: 
\begin{fact}\label{F: gen to ball}
    Let $S$ be a definable set $s\in S$ generic. Assume that $\tp(s)\vdash \phi(x)$ for some formula $\phi(x)$. Then there is an open set $U$ such that $\models \phi(s')$ for all $s'\in U\cap S$. 
\end{fact}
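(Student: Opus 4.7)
My plan is to apply compactness and then reduce the Fact to a dimension-theoretic statement about the valuation-topological frontier of a definable set. By compactness applied to $\tp(s)\vdash\phi(x)$, I fix a formula $\psi(x)\in\tp(s)$ with $\psi(x)\vdash\phi(x)$, chosen over some parameter set $A$ containing the parameters of $S$ and $\phi$ and over which $s$ is generic in $S$. Producing an open neighborhood $U$ of $s$ with $U\cap S\subseteq\{x:\phi(x)\}$ is then equivalent to producing an open $U\ni s$ disjoint from $T:=\{x\in S:\neg\psi(x)\}$, that is, showing $s\notin\overline{T}$ in the valuation topology on $K^m$.

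For this, I would invoke the following tameness property of ACVF: for any $A$-definable set $T\subseteq K^m$, the valuation-topological frontier satisfies $\dim(\overline{T}\setminus T)<\dim(T)$. Granting this, since $T\subseteq S$ gives $\dim(\overline{T}\setminus T)<\dim(T)\leq \dim(S)=\dim(s/A)$, and since $\overline{T}\setminus T$ is $A$-definable (the closure operator being definable in ACVF, e.g.\ via quantification over the value group $\Gamma$), genericity of $s$ over $A$ precludes $s\in\overline{T}\setminus T$. Combined with $s\notin T$ (because $\psi(s)$ holds), this gives $s\notin\overline{T}$, so any sufficiently small open neighborhood of $s$ in $K^m$ works as $U$.

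The main technical point is the frontier dimension bound. I would derive it from Proposition~\ref{decompositionProp}: decompose $T$ as a finite union of Zariski dense, relatively (valuation-)open pieces $T_j$ of irreducible Zariski closed sets $C_j$. Since $C_j$ is Zariski (hence valuation) closed, $\overline{T_j}\subseteq C_j$, and $\overline{T_j}\setminus T_j$ lies in the topological boundary of $T_j$ relative to $C_j$. Tameness of the valuation topology on the irreducible $C_j$ then forces this boundary to have dimension strictly less than $\dim(C_j)=\dim(T_j)$; a union argument over $j$ yields the global bound. The mild case analysis ensuring that the boundary in each $C_j$ is genuinely lower-dimensional (rather than merely a smaller set of the same dimension) is the one step that requires actual bookkeeping with the decomposition, but it does not use anything beyond the tools already in place in this section.
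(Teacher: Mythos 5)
Your overall strategy — reduce to showing $s\notin\overline{T}$ for $T:=\{x\in S:\neg\psi(x)\}$, and get that from a dimension bound on the valuation-topological frontier — is reasonable, but there is a genuine gap at the crucial step. You assert that, for $T_j=V_j\cap C_j$ a relatively open, Zariski dense subset of the irreducible variety $C_j$, ``tameness of the valuation topology on the irreducible $C_j$ forces'' $\dim(\overline{T_j}\setminus T_j)<\dim(C_j)$, and then flag the verification as mere ``bookkeeping.'' But this is precisely the content of the frontier bound in the irreducible case; ``tameness'' names the conclusion, it is not an argument. Nothing in Section~\ref{S: BaP} states or cites a frontier inequality for the valuation topology, and it does not trivialize in higher ambient dimension (unlike the one-variable Swiss-cheese picture where balls are clopen). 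In short, you have reduced the fact to an unproved lemma and left the lemma unproved.

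The gap is fillable, and seeing how to fill it also shows that the detour through the frontier theorem is unnecessary. If one supposes $\dim(\overline{T_j}\setminus T_j)=\dim(C_j)$ and applies Proposition~\ref{decompositionProp} a \emph{second time} to the $\acl(A)$-definable set $\overline{T_j}\setminus T_j$, a generic point $b$ of it lands in a piece of the form $U\cap C_j$ with $U$ open (the irreducible component must again be $C_j$ by a dimension count), and since $U\cap C_j\subseteq C_j\setminus V_j$ a small ball $B\ni b$ inside $U$ satisfies $B\cap V_j\cap C_j=\emptyset$, contradicting $b\in\overline{V_j\cap C_j}$. But it is cleaner, and closer in spirit to the paper's ``quantifier elimination and compactness,'' to skip the frontier bound entirely: decompose both $S\cap\psi$ and $S\cap\neg\psi$ via Proposition~\ref{decompositionProp}. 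Genericity of $s$ places it in an infinite piece $V_j\cap C_j$ of $S\cap\psi$ with $C_j$ a top-dimensional component of $\overline{S}^{\mathrm{Zar}}$ over $\acl(A)$. For any piece $W_k\cap D_k$ of $S\cap\neg\psi$, either $\dim(D_k)<\dim(S)$ or $D_k\neq C_j$ — in which case $\overline{W_k\cap D_k}\subseteq D_k$ (or $D_k\cap C_j$) is lower-dimensional over $\acl(A)$ and so misses the generic $s$ — or $D_k=C_j$, in which case disjointness of $\psi$ and $\neg\psi$ forces $W_k\cap C_j$ to be disjoint from $V_j\cap C_j$, hence from any ball $B\ni s$ with $B\subseteq V_j$. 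Either way $s\notin\overline{W_k\cap D_k}$, and summing over the finitely many $k$ gives $s\notin\overline{S\cap\neg\psi}$.
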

\begin{proof}
    By quantifier elimination and compactness. 
\end{proof}

\subsection{Plane curves in strongly minimal structures}\label{ss plane curves}
As is common in proofs of Zilber's Trichotomy in various settings, plane curves are the main object used for reconstructing a field. We give a quick overview of the main notions appearing below. Since these have. by now, become quite standard, we will be brief, referring interested readers to the introductory sections of \cite{CAS} (and references therein) for a more detailed discussion.

Let $\CG$ be a strongly minimal expansion of a group. By a \emph{plane curve},  we mean a definable  set $C\sub G^2$ of Morley Rank 1. We do not require $C$ to be strongly minimal. We will mostly be interested in definable families of plane curves, i.e., definable sets $\psi(x,y)$ such that $\psi(x,a)$ is a plane curve for all $a$ for which $\psi(x,a)$ is consistent.  For greater readability, we mostly denote definable families of plane curves as $\{Y_s\}_{s\in S}$. A definable family of plane curves $\{Y_s\}_{s\in S}$ is \emph{almost faithful} if for all $s\in S$ the set $\{t\in T: |Y_s\cap Y_t|=\infty\}$ is finite. It is a standard easy exercise to show that in a strongly minimal structure every definable plane curve is, up to finite symmetric difference, generic in some almost faithful family of plane curves (and, by weak elimination of imaginaries, the parameter set of the family may be assumed definable -- though this is not needed in the sequel). 

\begin{remark}\label{R: finite intersection}
    Note that if $\{C_s\}_{s\in S}$ is an almost faithful family of plane curves, then the canonical base of any strongly minimal $C\sub C_s$ is inter-algebraic with $s$. In particular, if $\{D_t\}_{t\in T}$ is any other almost faithful family of plane curves and $\mathrm {RM}(T)\neq \mathrm{RM}(S)$ then for generic $s\in S$ and $t\in T$ the intersection $C_s\cap D_t$ is finite. 

\end{remark}

If $\{Y_s\}_{s\in S}$ is an almost faithful family of plane curves, the dimension of the family is defined to be the Morley Rank of $S$. A strongly minimal structure is non-locally modular if and only if it admits a 2-dimensional definable (almost faithful) family of plane curves (see, e.g., \cite[Theorem 3.4.2]{Hr4}). \textit{For the present paper, this can be taken to be the definition of non local modularity}. 

In the sequel, when $\CG=(G,\oplus,\ldots)$ is an ACVF group-relic, it is useful to have good control over various properties that may not be definable. E.g., when working in a complete model, we would like to have definable families of curves that are locally analytic at every point (this will be defined below). In non-locally modular strongly minimal groups there is a standard and easy way of constructing arbitrarily large definable families of plane curves, in a way giving us sufficient control over the geometric properties of curves in the resulting family to achieve this goal. 

Given a plane curve $C\sub G^2$ and $a:=(a_1,a_2) \in G^2$ we let 
\[
t_a(C):=\{(x\ominus a_1, y\ominus a_2): (x,y)\in C\}
\]
be the $a$-translate of $G$. If $C$ is strongly minimal $t_a(C)$ is, obviously, also strongly minimal, and $t_a(C)\cap t_b(C)$ is infinite if and only if $a\ominus b$ belongs to the stabilizer of the generic type of $C$, denoted $\mathrm{Stab}^*(C)$.  Thus, the family of translates $\{t_a(C): a\in G^2\}$ is $2$-dimensional if and only if $\mathrm{Stab}^*(C)$ is finite. 

A strongly minimal plane curve $C$ is \emph{affine}\footnote{This terminology refers to the analogous notion from linear algebra, and not from  algebraic geometry.} if $\mathrm{Stab}^*(C)$ is infinite (which is readily seen to be equivalent to $C$ coinciding, up to finite symmetric difference, with a coset of a proper definable subgroup of $G^2$). It follows from \cite[Theorem 7.2]{castleHasson} (and is well known) that $\CG$ is not locally modular if and only if it admits a non-affine strongly minimal plane curve. Note that by what we have just said, non-local modularity of the relic $\CG$ implies the existence of a 2-dimensional definable family of plane curves (\textit{vis.}, the family of translates  of a non-affine plane curve). 

Families of translates of non-affine plane curves will play a crucial role in our construction. We remind some construction techniques of plane curves that are useful in the present work: 
\begin{itemize}
    \item If $C$ is a plane curve then $C^{-1}$, the opposite curve, is $\{(y,x): (x,y)\in C\}$. 
    \item For plane curves $C_1,C_2$ their (functional) sum is \[C_1\oplus C_2:=(x,y_1,\oplus y_2): (x,y_1)\in C_1, (x,y_2)\in C_2\}.\] 
    \item If $C_1,C_2$ are plane curves their (functional) composition $C_1\circ C_2$ is \[\{(x,y): \exists z\left((x,z)\in C_1\land (z,y)\in C_2\right)\}.\]
\end{itemize}

Following \cite{CAS} we call a plane curve \emph{trivial} if one of its projections has an infinite fibre. It is a standard easy exercise to verify that if $C$ is non-trivial neither is any of its translates nor is its opposite curve. Moreover, if $C_1, C_2$ are non-trivial then their composition is itself a non-trivial curve.  \textbf{From now on we tacitly assume that all definable curves appearing in the text are non-trivial. 
}

It is well known (see, e.g., (the proof of) \cite[Lemma 3.20]{HE} for an argument) that if $\mathcal C:=\{C_s\}_{s\in S}$ is an almost faithful family of plane curves with $\mathrm {MR}(S)\ge 2$ then either the composition family $\mathcal C\circ \mathcal C:=\{C_s\circ C_t: s,t\in S\}$ has rank\footnote{The family $\mathcal C\circ \mathcal C$ need not, of course, be almost faithful. By its rank, we mean the rank of the canonical base of any strongly minimal $C$ contained in a generic member of the composition family.}  higher than $\mathrm{RM}(S)$ or an infinite field is interpretable. Since, in the latter case, our task is achieved, we tacitly assume, throughout, that the former case occurs. In particular, by what we have said above, $C_s\cap (C_t\circ C_r)$ is finite for $s\in S$ generic and $t,r\in S$ independent generics.  

\subsection{A first reduction}\label{ss: definable}
We fix, up to the end of Section \ref{ss: definable}, a strongly minimal non-locally modular 1-dimensional group relic $\CG$, that is locally equivalent to $(K, +)$ or to $(K^*, \cdot)$. We show that $G$ is definable.

The following lemma is a simplified version of \cite[Corollary 9.5]{HY}. This is the only place in the paper where dp-rank is invoked, so we do not provide a definition. We point out only that any model of ACVF has dp-rank 1, \cite{DoGoLi}, that $\mathrm{dp-rk}(X)\ge \mathrm{dp-rk}(f(X))$ for any definable function $f$, and equality holds if $f$ has finite fibres. Moreover, $\mathrm{dp-rk} (X \times Y)=\mathrm{dp-rk}(X)+\mathrm{dp-rk}(Y)$ (see \cite[\S 4.2]{Si} for the definition and reference). All of this means that any interpretable set has finite dp-rank, so -- in particular -- so does $G$. 
An element  $a\in G$ is \emph{generic over a parameter set $A$} if $\mathrm{dp-rk}(a/A)=\mathrm{dp-rk}(G)$. We note that restricted to definable subsets of $K$ the dp-rank coincides with the $\acl$-dimension we have defined above, and thus also the associated notions of genericity coincide. 

We prove a simplified version of \cite[Corollary 9.5]{HY}: 
\begin{lemma}
    There exists a definable finite-to-one map from $G$ to $K^n$ (some $n$). 
\end{lemma}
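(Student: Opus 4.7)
I plan to translate the generic algebraic inter-dependence provided by the local equivalence hypothesis into an honest definable finite-to-one map $G\to K^n$. Unpacking the definition, we get generic elements $a\in G$ and $\alpha\in K$ (respectively $K^*$) over some finite parameter set $A$ with $\acl(aA)=\acl(\alpha A)$. In particular, there is an $A$-definable formula $\phi(x,y)\subseteq G\times K$ with $\phi(a,\alpha)$ such that both $\phi(a,K)$ and $\phi(G,\alpha)$ are finite. By compactness I would find integers $k,l$ and an $A$-definable cofinite $G_0\subseteq G$ (containing the generic type of $G$) such that $|\phi(a',K)|=k$ for every $a'\in G_0$, and such that $|\phi(G,\alpha')|\le l$ for every $\alpha'$ in some cofinite definable subset of $K$.

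\textbf{Constructing the map.} For $a'\in G_0$ I define $f(a')\in K^k$ to be the tuple of elementary symmetric functions of the $k$-element set $\phi(a',K)\subseteq K$; this yields an $A$-definable map $f_0\colon G_0\to K^k$. Since $G$ is strongly minimal, $G\setminus G_0$ is finite, so I extend $f_0$ to an $A$-definable map $f\colon G\to K^k$ by sending $G\setminus G_0$ to, say, $0\in K^k$.

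\textbf{Finite-to-one and the main obstacle.} To check $f$ is finite-to-one, suppose $a_1,a_2\in G_0$ with $f(a_1)=f(a_2)$; then $\phi(a_1,K)=\phi(a_2,K)$ as subsets of $K$, so both $a_1,a_2$ lie in $\phi(G,\gamma)$ for every $\gamma$ in this common set. The definable set $D:=\{a'\in G:\exists\gamma\,(\phi(a',\gamma)\wedge|\phi(G,\gamma)|\le l)\}$ contains the specific generic $a$ provided by local equivalence, so by strong minimality it is cofinite in $G$; on $D\cap G_0$ every fibre of $f$ has size at most $l$, while on the finite complement fibres are trivially finite. The main delicate step is this last one: converting the pointwise, generic inter-algebraicity witnessed at the particular pair $(a,\alpha)$ into a definable cofinite property of $a'$. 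It is strong minimality of $G$ (not the fact that $G$ is a group) that is doing the work here, upgrading a single generic witness into a uniform fibre bound and yielding a definable finite-to-one map $G\to K^k$, as required.
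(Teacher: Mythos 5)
There is a genuine gap. Your argument repeatedly invokes ``strong minimality of $G$'' to upgrade a single generic witness to a cofinite definable set: first to conclude that $G_0=\{a': |\phi(a',K)|=k\}$ is cofinite in $G$, and again to conclude that $D$ is cofinite. But the hypothesis is that the \emph{reduct} $\CG$ is strongly minimal, not that $G$ is strongly minimal as a $\CK$-interpretable set. The sets $G_0$ and $D$ are defined via the formula $\phi\sub G\times K$, which speaks about the valued field $K$ and is therefore $\CK$-definable but has no reason to be $\CG$-definable. Strong minimality of $\CG$ says nothing about $\CK$-definable subsets of $G$, and indeed at this stage of the paper $G$ is merely an interpretable set of dp-rank $1$; an infinite, co-infinite $\CK$-definable subset is entirely possible. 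So the cofiniteness claims do not follow, and without them your map $f$ is only finite-to-one on a ``generic'' (full dp-rank) piece of $G$, not on a cofinite one, and you have no way to extend it finite-to-one over the remainder.

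The deeper problem is your closing remark that ``it is strong minimality of $G$ (not the fact that $G$ is a group) that is doing the work.'' This is exactly backwards: the group structure is essential, and the paper's proof leans on it at the crucial moment. The inter-algebraicity of a single generic $a\in G$ with some $\alpha\in K$ is a local statement about one type; to propagate it across all of $G$ one must either translate by the group (as in the paper, which after passing to geometric sorts uses the definable multiplication $f(y,b)=yb$ and a dp-rank count to rule out infinite fibres), or have $G$ be $\CK$-strongly minimal, which is not given. Absent the group structure, the conclusion can fail: an interpretable dp-rank-$1$ set built from a geometric sort (e.g.\ a sort of balls) glued to a subset of $K$ can have a generic element inter-algebraic with an element of $K$ while admitting no definable finite-to-one map to any $K^n$. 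To repair the proof you would need to replace the cofiniteness steps with an argument that genuinely uses $\oplus$ to move the inter-algebraicity from the generic to an arbitrary $a'\in G$ (or follow the paper's route through elimination of imaginaries, Gagelman dimension, and the dp-rank computation using the group map).
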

\begin{proof}
     Since $G$ is locally equivalent to $(K,+)$ or to $(K^*, \cdot)$ there exist a parameter set $A_0$, an element  $a\in G$ and some $a'\in K$ such that $a$ and $a'$ are generic and $\CK$-inter-algebraic over $A_0$. For simplicity of notation, assume $A_0=\emptyset$. 
     
     By compactness, there is a formula $\phi(x,y)$ (over $A_0$) witnessing this. Let $C_0$ be its set of realizations, and $B_0$ its projection to the first coordinate.  Since $a\in B_0$  and $a$ is generic, $B_0$ is infinite. Note also that $G$ is dp-minimal (i.e., $\mathrm{dp-rk}(G)=1)$, because $a\in G$ is generic and $\mathrm{dp-rk}(a)=1$, by virtue of its inter-algebraicity with $a'$. 

    By the main result of \cite{HrACVFQE} ACVF has elimination of imaginaries up to the so-called, \emph{geometric sorts}. So $G\sub K^n \times S$ where $S$ is a product of  geometric sorts  of \cite{HrACVFQE}. We will show that the projection $G\to K^n$ has finite fibres. 
    
    Assume toward a contradiction that this is not the case. Let $Y\sub G$ be an infinite fibre of the projection. Then $Y$ can be identified with a definable subset of $S$. 
    
    Since ACVF is geometric (i.e., $\acl(\cdot)$ satisfies the Exchange Property and definable families of finite sets are uniformly bounded) it follows from Gagelman's work \cite{Gagelman}, that the $\acl$-dimension extends from definable to interpretable sets. The only properties that we need are that this extended dimension is additive, and that if $Y$ is definable and $E$ is a definable equivalence relation all of whose classes are open, then $\dim(Y/E)=0$. In particular, all the geometric sorts are 0-dimensional, and by additivity $\dim(S)=0$. 

    Consider the function $f: Y\times B_0\to G$ given by $f(y,b)=yb$ (where multiplication is taken in $G$). Since $G$ is dp-minimal, and $Y,B_0$ are infinite, additivity of the rank implies that some fibre of $f$ is infinite. Such a fibre defines a bijection between an infinite subset of $Y$ and an infinite subset of $B_0$, but $B_0$ is in finite-to-finite correspondence with a definable subset of $K$, so any infinite subset of $B_0$ has dimension 1, whereas $\dim(Y)=0$, so there cannot be a definable finite-to-finite correspondence between any infinite subset of $B_0$ and a subset of $Y$, a contradiction.\end{proof}

We thank J.P. Acosta for providing us with the next lemma. The proof presented here was suggested to us by B. Castle: 
\begin{lemma}
    If $X$ is interpretable in ACVF and there is a definable finite-to-one function $f: X\to K^n$ (some $n$) then $X$ is definable. 
\end{lemma}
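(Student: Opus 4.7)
The plan is to construct a definable injection $X \hookrightarrow K^N$ for some $N$; its image is then a definable subset of $K^N$, so $X$ is in definable bijection with a definable set and is hence definable in the required sense.

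First, by partitioning $X$ according to the size of the $f$-fibres (uniformly bounded by algebraic boundedness of ACVF), we may assume $f$ is exactly $k$-to-one. By the Haskell--Hrushovski--Macpherson elimination of imaginaries for ACVF, we embed $X \hookrightarrow K^m \times S$ for some $m$ and some product $S$ of geometric sorts, with coordinate projections $\pi_K$ and $\pi_S$.

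The first key step is to show $\pi_K \colon X \to K^m$ is itself finite-to-one. For $a \in \pi_K(X)$, the fibre $\pi_K^{-1}(a)$ is in definable bijection with a definable subset $S_a \sub S$, and $f$ restricts there to a definable, finite-to-one map $S_a \to K^n$. But by the orthogonality between the main sort $K$ and the geometric sorts in ACVF (any definable function, even with $K$-parameters, from a subset of $S$ into $K^n$ has finite image), the domain $S_a$ must itself be finite, with size bounded uniformly in $a$.

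The second and harder step is to upgrade to an injection. Consider $\Phi := (\pi_K, f) \colon X \to K^{m+n}$; this is definable and finite-to-one. If $\Phi$ is injective we are done, since then $X$ is in definable bijection with the definable image $\Phi(X) \sub K^{m+n}$. The main obstacle is to handle the case where $\Phi$ collapses distinct elements of $X$ --- elements agreeing on $\pi_K$ and $f$ but differing in $\pi_S$. For each such collision $(a,y) \in \Phi(X)$, the fibre $\Phi^{-1}(a,y)$ projects under $\pi_S$ to a finite, uniformly bounded subset of $S$ algebraic over $(a,y) \in K^{m+n}$. Using orthogonality once more, the definable family of such finite subsets of $S$, parametrized by $(a,y) \in \Phi(X)$, takes only finitely many distinct values, each an $\emptyset$-algebraic finite subset of $S$. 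Partitioning $\Phi(X)$ accordingly, and then partitioning each piece further so as to canonically enumerate the elements of the (now fixed) finite subset of $S$, produces the required definable injection into a power of $K$.
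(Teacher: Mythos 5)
Your first step is sound, and it is essentially the same dimension argument the paper uses for the \emph{preceding} lemma: a definable map from a subset of $S$ into $K^n$ has image of Gagelman-dimension $0$, hence finite, so composing with the finite-to-one $f$ forces $\pi_K$ to have finite fibres. The trouble is the second invocation of ``orthogonality,'' which runs in the wrong direction. The dimension argument says that dimension-$0$ definable subsets of $K^n$ are finite; it does \emph{not} say that a definable map from a subset of $K^{m+n}$ to (finite subsets of) a geometric sort has finite image, precisely because infinite subsets of $S$ can have dimension $0$. Concretely, identify $\Gamma$ with $S_1$ and take
\[
X=\bigl\{(a,\gamma)\in K^{*}\times \Gamma \;:\; v(a)\neq 0,\ \gamma\in\{v(a),\,2v(a)\}\bigr\},\qquad f:=\pi_K.
\]
Then $\Phi=(\pi_K,f)$ is exactly $2$-to-one, and $\pi_S\bigl(\Phi^{-1}(a,a)\bigr)=\{v(a),2v(a)\}$ ranges over infinitely many distinct two-element subsets of $\Gamma$, so your family of finite subsets of $S$ is not finite and your claim that each value is an $\emptyset$-algebraic finite set fails. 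There is no ``now fixed finite subset of $S$'' to name and enumerate. (This $X$ is of course definable --- send $(a,\gamma)\mapsto(a,i)$ with $\gamma=iv(a)$ --- but nothing in your argument produces such a map.) What step two actually requires is a uniform definable enumeration of the $\Phi$-fibres inside $K^m\times S$, i.e.\ elimination of finite imaginaries in that mixed sort, which is not a consequence of orthogonality and is not available for free in the auxiliary sorts.

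The paper avoids the issue by never routing through the HHM geometric sorts. It writes $X=Y/E$ with $Y\subseteq K^n$, notes that each $x\in X$ is interalgebraic with $f(x)\in K^n$ and is $\dcl$ of any representative of its $E$-class, and --- after naming a constant $c\notin\CO$ so that $M:=\acl(f(X))\cap K$ is a model --- shows $X\subseteq\dcl(M)$, so every $E$-class already meets $M^n$. One then shrinks $Y$ to a definable $Y'$ on which $E$ has finite classes with $Y'/E=X$, and finishes with elimination of finite imaginaries for the \emph{field} $K$. The essential point is that this leaves the finite fibering entirely inside tuples from $K$, where finite imaginaries can be eliminated via symmetric functions; your route leaves the fibering inside $S$, which is exactly where the obstruction lives.
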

\begin{proof}
    After naming a single constant $c\notin \CO$, we may assume, using the completeness of ACVF$_{p,q}$ that for all $A\sub K$ the algebraic closure $\acl(A)$ (more precisely $\acl(A)\cap K$) is a model. Let $X=Y/E$ for $Y\sub K^n$ a definable set and $E$ a definable equivalence relation on $Y$.  By assumption $\acl(X)\cap K$ is a model, and since $f: X\to K^n$ has finite fibres $X\sub \acl(X)\cap K$, and, in fact, since $x\in Y/E$ implies $x\in \acl(K)$ it follows that $X\sub \dcl(\acl(X)\cap K)$. As a result, we can replace $Y$ with some $Y'$ such that $E$ has finite classes in $Y'$ and $Y'/E=X$. Since $K$ is a field, it eliminates finite imaginaries, i.e., we can find an injection $h: X\to K^m$ (some $m$), with the desired conclusion. 
\end{proof}

\begin{remark}
    In the above argument, elimination of imaginaries for ACVF is invoked only to conclude that $G$ is in finite-to-finite correspondence with a definable subset of $K^n$. In the applications, we only need this reduction for groups that we know, a priori, to have this property. 
\end{remark}

Combining the last two lemmas, we see that $G$ is definable, as claimed. Consequently, \textbf{from now on, we assume, throughout, that $G$ is a definable group.  }

\subsection{Complete models and analytic functions}\label{ss; complete models}
Recall that a valued field is complete if its value group is Archimedean (i.e., embeds into $(\mathbb R, +, <)$) and is complete with respect to the metric induced by the valuation. 
Conveniently, every completion of ACVF has a complete model (in a less overloaded terminology, every saturated model of ACVF has an elementary submodel that is complete as a valued field). Indeed, using the completeness of $\mathrm{ACVF}_{p,q}$ we get that $\Cc[[t^\Qq]]\models \mathrm{ACVF}_{0,0}$ and similarly $\mathbb  F_p^{alg}[[t^{\Qq}]]\models \mathrm{ACVF}_{p,p}$. Clearly, $\Cc_p\models \mathrm{ACVF}_{0,p}$.  For our purposes, the main advantage of working in a complete model is the theory of analytic functions in complete local fields, that will allow us, ultimately, to develop the intersection theory we need in ACVF. Below, and throughout the paper, if $\CK\models \mathrm{ACVF}$ we let $\Kk$ denote a complete elementarily equivalent model.

Recall that in a  non-Archimedean setting, a series $\sum\limits_n a_n$ converges if and only if $\lim\limits_n v(a_n)=\infty$.  The following definition is standard:
\begin{definition}
    Let $\Kk$ be a complete valued field, $U\sub K$ an open set. A function $f: U\to \Kk$ is analytic at $z_0\in U$ if on some ball $B\ni z_0$ the function $f$ coincides with a power series $\sum\limits_{n=0}^\infty c_n(z-z_0)^n$ converging in $B$. 
\end{definition}

\textbf{For the purposes of this paper, a function is analytic on a ball $B$ if it coincides on $B$ with a convergent power series.} In the literature, the definition is more general, allowing the function to coincide with a Laurent series converging on its domain, but since those will not be used in the sequel, we keep a slightly more restrictive definition. We note that a function analytic at every point of its domain need not be analytic. We call such functions \emph{locally analytic}.

Complete fields satisfy the identity theorem (see \cite[Theorem II.10.5.2]{LAG}):

\begin{fact}\label{identityTheoremf} (Identity Theorem) Let $$f(x)=\sum_{n\geq 0} a_n x^{n}$$ be a power series converging in a neighborhood $D(f)$ of $0$. Assume that $f(x)=0$ for all $x$ in some non-empty open set $U\subseteq D(f)$, then $a_n=0$ for all $n\textit{}$.
\end{fact}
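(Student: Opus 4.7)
The plan is to reduce to a base case where the vanishing set contains $0$, exploiting the non-Archimedean principle that every point of a ball is a center.

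First I would establish the following base case: if $g(y)=\sum c_k y^k$ converges on a ball around $0$ and vanishes on some neighbourhood of $0$, then all $c_k=0$. Suppose not, and let $k_0$ be minimal with $c_{k_0}\neq 0$. Then $g(y)/y^{k_0}$ is a convergent power series with constant term $c_{k_0}\neq 0$, so by the strong triangle inequality its valuation equals $v(c_{k_0})$ for $y$ of sufficiently large valuation; hence $g(y)\neq 0$ there, contradicting the hypothesis.

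For the main argument I would pick $z_0\in U$ and a ball $B\subseteq U$ centred at $z_0$ on which $f\equiv 0$, then re-expand $f$ at $z_0$. Substituting $x=z_0+y$ into $f(x)=\sum a_n x^n$ and applying the binomial theorem produces the double sum
\[ \sum_n\sum_k a_n \binom{n}{k} z_0^{n-k} y^k. \]
In a complete non-Archimedean field, a countable sum converges (and is rearrangement-invariant) as soon as the valuations of its terms tend to $\infty$; for $z_0+y\in D(f)$ this is easily verified, yielding $f(z_0+y)=\sum_k b_k y^k$ with $b_k=\sum_{n\geq k}\binom{n}{k} a_n z_0^{n-k}$, convergent on the full re-expansion domain. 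The base case then forces $b_k=0$ for all $k$, so $f$ vanishes identically on that domain.

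To conclude, I would invoke the strong triangle inequality, which gives $B(z_0,R)=B(0,R)$ whenever $z_0\in B(0,R)\subseteq D(f)$; hence the re-expansion domain contains $0$, and a final application of the base case to $f$ itself gives $a_n=0$ for all $n$. The only subtle step is the rearrangement in the re-expansion — in the classical (Archimedean) setting this would require delicate absolute-convergence bookkeeping, but here it is immediate from non-Archimedean completeness.
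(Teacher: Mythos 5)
Your argument is correct. Note that the paper does not prove this statement: it is quoted as a Fact with a citation to \cite[Theorem II.10.5.2]{LAG}, so there is no internal proof to compare against. Your route is a standard, complete, and self-contained derivation: first the base case that a convergent power series vanishing near $0$ has all coefficients zero (minimal nonzero coefficient plus the strong triangle inequality); then the re-expansion of $f$ at an interior zero $z_0\in U$, with the rearrangement of the double sum justified by the non-Archimedean criterion that a countable sum converges and is rearrangement-invariant precisely when its terms tend to $0$; and finally the ultrametric fact $B(z_0,R)=B(0,R)$ to propagate the vanishing back to a neighbourhood of $0$ and apply the base case once more. The one step you pass over lightly is the uniformity in the base case: to see that $v\bigl(g(y)/y^{k_0}\bigr)=v(c_{k_0})$ once $v(y)$ is large enough, write $g(y)/y^{k_0}-c_{k_0}=y\cdot h(y)$ with $h$ a convergent power series; then $h$ is bounded on a small ball about $0$ (since $\sup_j |c_{k_0+1+j}|\,r^j<\infty$ for small $r$), so the remainder has strictly larger valuation than $c_{k_0}$ for $y$ small. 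This is routine but worth stating, since it is exactly the point where non-Archimedean completeness is used to get a \emph{uniform} bound over all higher-order terms at once.
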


We also have the following version of the implicit function theorem (see, e.g., \cite[Theorem II.10.8]{LAG}): 

\begin{fact}(Implicit Function Theorem)\label{implicitFunctionTheoremf}
Suppose $\Kk$ is complete, $U\subseteq \Kk^n$ is open and $F:U\to \Kk$ is an analytic function at $z\in U$. Assume $z$ is such that $\frac{\partial F}{\partial x_n}(z)\neq 0$, then there are $U_1\subseteq \Kk^{n-1}$ and $U_2\sub K$ both open, $U'\subseteq U$ open with $z\in U' \cap (U_1\times U_2)$ and $f:U_1\to U_2$ analytic such that:

\begin{equation*}
    \{u\in U':F(u)=0\}=\{(x,f(x)):x\in U_1\}.
\end{equation*}
\end{fact}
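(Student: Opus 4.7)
The plan is to reduce to a normalized setting, construct a formal power series solution by coefficient comparison, and then establish its convergence on a sufficiently small polydisc. First, by translating I may assume $z=0$. For the claimed identification to be non-trivial one must have $F(z)=0$; otherwise $F$ has constant nonzero valuation on a small enough $U'$, so both sides of the claimed equality are empty and the statement is vacuously true. Writing $y:=x_n$ and $x:=(x_1,\ldots,x_{n-1})$, and replacing $F$ by $F/\partial_y F(0)$ --- which is still analytic at $0$ since $\partial_y F(0)\in\Kk^{\times}$ --- we reduce to the case where, on some open polydisc of polyradius $r=(r_1,\ldots,r_n)\in\Gamma^n$, $F$ is represented by a convergent series
$$F(x,y)=y+\varphi(x,y),\qquad \varphi(0,0)=0,\ \partial_y\varphi(0,0)=0.$$

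Next, I construct a formal power series $f(x)=\sum_{\alpha} a_{\alpha} x^{\alpha}\in\Kk[[x_1,\ldots,x_{n-1}]]$ with $a_0=0$ satisfying $F(x,f(x))=0$ as a formal identity. Substituting $y=f(x)$ into $y=-\varphi(x,y)$ and equating coefficients of $x^{\alpha}$, the coefficient $a_{\alpha}$ is expressible as a polynomial with integer coefficients in the coefficients of $\varphi$ and in the $a_{\beta}$ with $|\beta|<|\alpha|$; this determines $f$ uniquely by recursion.

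The crux of the proof is showing that this formal $f$ converges on some open polydisc around $0$. The convergence of $\varphi$ on polyradius $r$ supplies, for every $r'<r$, lower bounds of the shape $v(c_{\alpha,k})\geq -\langle\alpha,r'\rangle-kr_n'+C$ on the coefficients $c_{\alpha,k}$ of $x^{\alpha}y^k$ in $\varphi$. Combining these with the recursive formulae for $a_{\alpha}$ and repeated use of the ultrametric inequality, a careful induction on $|\alpha|$ establishes an estimate of the form $v(a_{\alpha})\geq c|\alpha|$ for some $c\in\Gamma$, which is exactly what is needed for $f$ to converge on an open polydisc $U_1\ni 0$. Shrinking $U_1$ if necessary I arrange $f(U_1)\subseteq U_2$ for some small open ball $U_2\ni 0$. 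Finally, on a small enough $U'\subseteq U_1\times U_2$ the map $y\mapsto -\varphi(x,y)$ is a strict ultrametric contraction in $y$, uniformly in $x$ (since $\partial_y\varphi(0,0)=0$), so $f(x)$ is the unique $y\in U_2$ with $F(x,y)=0$. This identifies $\{u\in U':F(u)=0\}$ with $\{(x,f(x)):x\in U_1\}$. The main technical obstacle is the valuative coefficient estimate on the $a_{\alpha}$; once that is in hand the remainder is formal manipulation, continuity, and the ultrametric Banach fixed-point principle.
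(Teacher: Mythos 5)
The paper does not prove this statement at all: it is quoted as a \emph{Fact} with a pointer to \cite[Theorem II.10.8]{LAG}, so there is no in-paper argument to compare against. What you supply is a genuine proof, along the standard lines for the non-Archimedean implicit function theorem: normalize so that $F=y+\varphi$ with $\varphi(0)=0$ and $\partial_y\varphi(0)=0$, solve $y=-\varphi(x,y)$ formally by recursion on $|\alpha|$, and then establish convergence by an ultrametric coefficient estimate. The outline is sound. The recursion is well-founded precisely because of the two vanishing conditions on $\varphi$ (they kill the terms with $(\beta,k)=(0,0)$ and $(0,1)$, which are exactly the ones that would re-introduce $a_\alpha$ on the right), and an estimate of the shape $v(a_\alpha)\geq c|\alpha|$ does close under the induction once one fixes a small enough polyradius $\rho$ and bound $M$; the ultrametric inequality makes the usual majorant bookkeeping cleaner than over $\mathbb C$. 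Two small points worth tightening. First, the case $F(z)\neq 0$ is not ``vacuously true'' as you say: the right-hand side $\{(x,f(x)):x\in U_1\}$ is forced to be nonempty because $z\in U_1\times U_2$, so the statement as printed really does implicitly assume $F(z)=0$; you should say that rather than claim both sides are empty. Second, once convergence of $f$ is in hand you should note explicitly that the \emph{formal} identity $F(x,f(x))=0$ becomes an identity of analytic functions because, in the complete non-Archimedean setting, composition of convergent power series agrees with the formal composition; this is a standard fact but it is the step that converts your algebra into the asserted equality of sets. Neither remark affects the soundness of the approach.
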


As a corollary, we have:

\begin{fact}\label{inversionf}(Inverse Function Theorem)
Suppose $\Kk$ is complete, $U\subseteq \Kk$ is open and $F:U\to \Kk$ is an analytic function at $z\in U$. Assume $F(z)=z'$ and $\frac{\partial F}{\partial x} (z)\neq 0$ then there is a unique analytic function $G$ converging in a neighborhood of $U'\ni z'$ such that $F(G(y))=y$ for each $y\in U'$.
\end{fact}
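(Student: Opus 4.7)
The plan is to deduce this from the Implicit Function Theorem (Fact \ref{implicitFunctionTheoremf}) applied to the two-variable function $H(y,x) := F(x) - y$, and then extract uniqueness from the Identity Theorem (Fact \ref{identityTheoremf}).

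First I would verify that $H$ is analytic at $(z',z) \in \Kk^2$: since $F$ is analytic at $z$ with local expansion $F(x) = \sum_{n \geq 0} c_n(x-z)^n$ converging on some ball $B \ni z$, the function $H(y,x) = -y + z' + \sum_{n \geq 0} c_n(x-z)^n$ (after writing $-y = -(y - z') - z'$) is visibly given by a convergent power series in $(y-z', x-z)$ on a neighborhood of $(z',z)$. Moreover, $H(z',z) = F(z) - z' = 0$ and
\[
\frac{\partial H}{\partial x}(z', z) = F'(z) \neq 0.
\]
Hence Fact \ref{implicitFunctionTheoremf} applies (with the roles of the variables arranged so that we are solving for the second coordinate): there exist open sets $U_1 \ni z'$, $U_2 \ni z$ and an analytic function $G : U_1 \to U_2$ whose graph coincides with the zero set of $H$ on $U_1 \times U_2$. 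In particular $F(G(y)) = y$ for every $y \in U_1$, as required.

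For uniqueness, suppose $G_1, G_2$ are two analytic functions converging on some common ball $B \subseteq U_1$ containing $z'$, each satisfying $F(G_i(y)) = y$ on $B$. After shrinking $B$ we may assume $G_1(B), G_2(B) \subseteq U_2$, so that the pairs $(y, G_i(y))$ both lie in the zero locus of $H$ in $U_1 \times U_2$; by the graph description coming from the IFT they must both equal the function supplied by the theorem, and in particular $G_1 = G_2$ on $B$. (Alternatively, the difference $G_1 - G_2$ is an analytic function that vanishes at $z'$; composing with $F$ and using the local injectivity of $F$ on a ball around $z$ -- which is itself a consequence of the graph description -- forces $G_1 - G_2 \equiv 0$ near $z'$, and then Fact \ref{identityTheoremf} upgrades this to equality of their power series coefficients.)

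There is no real obstacle here; the only point requiring minor care is the bookkeeping in identifying $H(y,x)$ with an honest two-variable convergent power series so that Fact \ref{implicitFunctionTheoremf} is literally applicable, and matching the orientation (solving for $x$ in terms of $y$) with the statement of the IFT, where the distinguished variable is the last coordinate.
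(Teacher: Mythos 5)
Your argument is correct and follows exactly the route the paper implies: the paper presents this statement ``As a corollary'' of Fact \ref{implicitFunctionTheoremf} without spelling out the derivation, and your application of the Implicit Function Theorem to $H(y,x)=F(x)-y$ is the standard way to fill that in. Your uniqueness argument via the graph description (or alternatively via the Identity Theorem) is likewise the intended routine step.
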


A key lemma now is: 

\begin{lemma}\label{L: t-an}
Let $\Kk\models\mathrm{ACVF}$ be a complete model,  
 $f: \Kk\to \Kk$ a definable function. Then there exists a finite set $S_f\sub \Kk$ definable over the same parameters as $f$, such that on $\Kk\setminus S_f$ the function $f$ is 
a composition of a power of the Frobenius automorphism with a locally analytic function. Moreover, if $\{f_s\}_{s\in Q}$ is a $\Kk$-definable family of unary functions there is a uniform bound $N$, such that $f_s=\Fr^N\circ g_s$ for $g_s$ locally analytic in $K\setminus S_{f_s}$. 

\end{lemma}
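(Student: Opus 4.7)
The plan is to decompose the graph of $f$ via Proposition~\ref{decompositionProp} into finitely many pieces contained in irreducible algebraic curves, and then apply the Implicit Function Theorem (Fact~\ref{implicitFunctionTheoremf}) to each piece after peeling off an appropriate power of Frobenius to handle inseparability in positive characteristic.

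First I would apply Proposition~\ref{decompositionProp} to the graph $\Gamma(f)\subseteq\Kk^2$, writing
\[
\Gamma(f) = \bigcup_{i=1}^k \{(x,y)\in V_i : L_i(x,y)=0\}
\]
with each $L_i$ irreducible and each $V_i$ open, all definable over the parameters of $f$. Since $f$ is a function, only finitely many $x$ lie in several pieces at once or at a boundary point of a single projection; I collect these, along with the further exceptional points produced below, into $S_f$.

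Now fix a piece with irreducible $L_i$. In characteristic zero, $\partial L_i/\partial y \not\equiv 0$ automatically (else $L_i\in\Kk[x]$, contradicting that $\{L_i=0\}$ has infinite $x$-projection); in characteristic $p>0$, let $n_i\geq 0$ be the maximal integer with $L_i(x,y) = P_i(x, y^{p^{n_i}})$ for some polynomial $P_i$ (necessarily irreducible, by unique factorization), so that $\partial P_i/\partial z \not\equiv 0$. In either case, irreducibility of $P_i$ forces $P_i \nmid \partial P_i/\partial z$, so by B\'ezout the common zero set of $P_i$ and $\partial P_i/\partial z$ is finite; adjoining its $x$-projection to $S_f$, Fact~\ref{implicitFunctionTheoremf} at every remaining point yields a locally analytic $h_i$ with $z = h_i(x)$ on $\{P_i = 0\}$. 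Since $z = y^{p^{n_i}}$ and $\Kk$ is algebraically closed, $y = \Fr^{-n_i}(h_i(x))$, i.e., $f = \Fr^{-n_i}\circ h_i$ on this piece.

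Setting $N_f := \max_i n_i$ and replacing each $h_i$ by $\Fr^{N_f - n_i}\circ h_i$ (which remains locally analytic because in characteristic $p$ one has $(\sum c_k(x-x_0)^k)^{p^m} = \sum c_k^{p^m}(x-x_0)^{kp^m}$, a series converging on the same domain), I obtain a single locally analytic $g$ on $\Kk\setminus S_f$ with $f = \Fr^{-N_f}\circ g$. For a definable family $\{f_s\}_{s\in Q}$, the procedure goes through uniformly in $s$: the polynomials $L_{i,s}$ from Proposition~\ref{decompositionProp} have $y$-degree bounded by a constant depending only on the defining formula of the family (by uniformity in quantifier elimination), so $n_{i,s}\leq \deg_y L_{i,s}$ admits the uniform bound $N := \max_{i,s} n_{i,s}$. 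The main obstacle I anticipate is the parameter bookkeeping needed to ensure that $S_f$ and $g$ are definable over the same parameters as $f$, and that the degree bound for families is genuinely uniform; both follow by tracking parameters through the proof of Proposition~\ref{decompositionProp}, but require care.
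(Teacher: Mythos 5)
Your argument is correct and follows essentially the same route as the paper's: decompose the graph via Proposition~\ref{decompositionProp} into pieces inside irreducible algebraic curves, peel off the maximal power of Frobenius to reach a polynomial $P_i$ separable in the second variable, then apply the Implicit Function Theorem away from the finite set of critical points, and finally normalise to a single power of Frobenius across all pieces. The one presentational difference is that you identify the maximal $n_i$ with $L_i(x,y)=P_i(x,y^{p^{n_i}})$ directly, whereas the paper strips one factor of $p$ at a time by induction on the $y$-degree; the two are equivalent. Your handling of the uniform bound for families (via the uniform bound on $\deg_y L_{i,s}$ from quantifier elimination) also matches the paper's. The bookkeeping you flag — checking that the exceptional points at piece boundaries form a finite set definable over the parameters of $f$, and that the local inverses glue to a single $g$ with $g=\Fr^{N_f}\circ f$ — is indeed the only place requiring care, but as you indicate it does go through.
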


\begin{proof}

Let $f$ be a definable unary function. Proposition \ref{decompositionProp} implies that the Zariski closure of the graph of $f$ coincides with the zero set of a polynomial, 
$F(x,y)\in K[x,y]$. It will suffice to prove the result for each irreducible component of $F(x,y)$ separately. Indeed, if the graph of  $f$, coincides on an irreducible component $C_i$ of $F$ with $\Fr^{n_i} f_i$ for some locally analytic function $f_i$ and $|n_i|$ minimal possible, then by taking $n=\min\limits_i{n_i}$ and 
$g_i:=\Fr^{n_i-n}\circ f_i$ the choice of $n$ assures that the $g_i$ are locally analytic, and $f$ is locally the composition of 
$\Fr^n$ with $g_i$.  

So we may assume that $F(x,y)$ is irreducible. By the Implicit Function Theorem, the following claim is enough to complete the proof:

\begin{claim}
Let $F(x,y)\in K[x,y]$ be an irreducible polynomial and suppose that $F$ is not constant. 
  then $F(x,y)=H(x,\Fr^n(y))$ for some $n\in \mathbb N$ and some polynomial $G$ such that only finitely many points $(a,b)$ satisfy 
   \begin{equation}
     H(a,b)= \frac{\partial H}{\partial y}(a,b)=0.
 \end{equation}

\end{claim}

\begin{claimproof}\label{C: claim Fr}
If there are only finitely many points $(a,b)\in K^2$ such that 
 \begin{equation}
     F(a,b)= \frac{\partial F}{\partial y}(a,b)=0
 \end{equation} 
 then $n=0$ and $F=H$.

 So assume otherwise. Notice that it is enough to show that whenever one has such a polynomial, $F(x,y)$ there is some polynomial $G(x,y)$ with $F(x,y)=G(x,\Fr(y))$. Because the $y$-degree of $G$ is smaller than that of $F$, and we may conclude by induction.

Since $F(x,y)=0$ and $\frac{\partial F}{\partial y}[x,y]=0$ define 1-dimensional algebraic sets and  $F$ is irreducible and non-constant, they can only have infinite intersection if the latter contains the former. Since $\deg(\frac{\partial F}{\partial y})<\deg(F)$ this can only happen if $\frac{\partial F}{\partial y}[x,y]$ is the zero polynomial. This proves the lemma in characteristic $0$. 

In positive characteristic, if $$F(x,y)=f_0(x)+f_1(x) y +\ldots+f_n(x) y^n$$ with $f_i(x)\in K[x]$ for $i=0,1,\ldots, n$. As $$\frac{\partial F}{\partial y}(x,y)=f_1(x)+2f_2(x)y+\ldots+nf_n(x)y^{n-1}=0$$ for any $a$ and any $i=1,\ldots, n$ we have that $if_i(a)=0$. So either $f_i$ is zero or $p\mid i$ which implies that $F(x,y)$ is a polynomial in the variable $y^p$, as required.   
\end{claimproof} 

Fixing $H(x,z)$ as in the claim we get, by the Implicit Function Theorem, that away from the finite set $S_f$ of those points  $(x_0,y_0^{p^n})$ where $\frac{\partial H}{\partial y}(x_0, y_0^{p^n})=H(x_0,y_0^{p^n})=0$, there exists an analytic function $\psi_{a,b}$ defined on an open neighbourhood of $a$ such that on its domain  $H(x,\psi_{a,b}(z))=0$. Thus, by the definition of $H$, in a neighbourhood of  the point $a$ the graph of $f$ coincides with $\Fr ^{-n}\circ \psi_{a,b}$. Since $\psi_{a,b}$ is analytic, $F$ and thus also  $H$ are defined over the same parameter set as $f$ was so is $S_f$, and the conclusion follows. 

For the ``moreover'' part notice, first, that -- as mentioned above -- to each $f_s$ the polynomial $H_s$ provided by the claim applied to $f_s$ is defined over $s$. Thus, there is a uniform bound on  $\{\mathrm{deg}(H_s): s\in Q\}$. As the power of Frobenius used for $f_s$ is bounded by the degree of $H_s$ we get the desired result. 
\end{proof}

In the sequel, we are mostly interested in topological properties of definable functions arising from questions on the intersections of definable curves in families. So it will be useful to fix some \textit{ad hoc} terminology: 
\begin{definition}
    Let $\Kk$ be a complete model of ACVF. A definable function $f:U\to \Kk$ is $t$-analytic if for some $n\in \mathbb N$ the function $\Fr^n\circ f$ is  locally analytic in $V$.   
\end{definition}

In this terminology we have shown that, every definable function in a complete model of ACVF is, away from finitely many points, $t$-analytic. To translate such results as, say, the Implicit Function Theorem to elementary extensions we need some sort of definability of analyticity (in families). We do not know of such a result in the context of ACVF, but the following weaker version is sufficient for our needs:

\begin{lemma}\label{L: generically locally analytic}
    Let $\Kk$ be a complete model, $C\sub \Kk^2$ an  $A$-definable 1-dimensional set. Then there exists a finite set $S\sub \acl(A)$ such that for each $c\in C\setminus S$ there exists a  $t$-analytic function whose graph coincides with $C$ in a neighbourhood of $c$. 
\end{lemma}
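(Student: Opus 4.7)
The plan is to reduce $C$ to a finite union of irreducible algebraic curves via Proposition \ref{decompositionProp}, then apply the Frobenius factorisation from the Claim in the proof of Lemma \ref{L: t-an} to each component, and finally invoke the Implicit Function Theorem to produce the required $t$-analytic local parametrisation off a finite exceptional set.

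First, applying Proposition \ref{decompositionProp} to $C$ yields a decomposition
\[
C \;=\; \bigcup_i \bigl\{(x,y) \in V_i : L_i(x,y) = 0\bigr\}
\]
with each $L_i$ irreducible and each $V_i$ open in the valuation topology. Those indices $i$ for which $V_i \cap V(L_i)$ is finite contribute only finitely many points, which I would absorb into the exceptional set $S$; for the remaining indices the ``moreover'' clause of Proposition \ref{decompositionProp} guarantees that $L_i$ is defined over $\acl(A)$. Since two distinct irreducible plane algebraic curves meet in only finitely many points, I would further add to $S$ the finite, $\acl(A)$-definable set $\bigcup_{i \ne j} V(L_i) \cap V(L_j)$, so that every point of $C \setminus S$ lies on exactly one component.

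Next, fixing an irreducible $L_i$ over $\acl(A)$, the Claim inside the proof of Lemma \ref{L: t-an} rewrites $L_i(x,y) = H_i(x, \Fr^{n_i}(y))$ for some $n_i \in \mathbb{N}$ and some irreducible $H_i \in \acl(A)[x,z]$ for which $\Sigma_i := \{(a,w) : H_i(a,w) = 0 = \partial_z H_i(a,w)\}$ is finite. Pulling $\Sigma_i$ back through $(x,y) \mapsto (x, y^{p^{n_i}})$ yields a finite $\acl(A)$-definable subset of $V(L_i)$ that I would also adjoin to $S$. For any point $(a,b) \in C \setminus S$, the point $(a, b^{p^{n_i}}) \in V(H_i)$ then satisfies $\partial_z H_i(a, b^{p^{n_i}}) \ne 0$, so Fact \ref{implicitFunctionTheoremf} supplies a locally analytic $\psi$ near $a$ with $\psi(a) = b^{p^{n_i}}$ and $H_i(x, \psi(x)) \equiv 0$. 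Setting $f := \Fr^{-n_i} \circ \psi$ (well defined since $\Kk$ is algebraically closed), the composition $\Fr^{n_i} \circ f = \psi$ is locally analytic, so $f$ is $t$-analytic, and its graph agrees with $V(L_i)$, hence with $C$, in a valuation-open neighbourhood of $(a,b)$.

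The genuinely substantive step is the Frobenius factorisation: a purely inseparable component such as $y = x^{1/p}$ admits no local analytic graph presentation in the naive sense, and the Claim precisely isolates this obstruction into a controlled power of Frobenius, leaving $H_i$ separable in $z$ so that the classical Implicit Function Theorem applies. The remaining steps --- the quantifier-elimination decomposition, the Bezout-type bound on pairwise component intersections, and verifying that $S$ is finite and contained in $\acl(A)$ --- are routine given the tools already built up.
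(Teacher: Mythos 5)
Your proof follows the same route as the paper's: decompose $C$ via Proposition~\ref{decompositionProp}, remove the pairwise intersections of the Zariski components, factor each irreducible defining polynomial as $H_i(x,\Fr^{n_i}(y))$ via the Claim inside Lemma~\ref{L: t-an}, delete the finite singular locus of $H_i$, and apply the Implicit Function Theorem followed by $\Fr^{-n_i}$; you also correctly observe that $S\subseteq\acl(A)$ because the $L_i$, $H_i$, and $n_i$ are determined by $C$. The argument is correct and matches the paper's proof in all essentials.
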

\begin{proof}
    By Proposition \ref{decompositionProp} $C$ is locally Zariski closed. First, remove the (finite) set of intersections between different irreducible components of the Zariski closure of $C$. Then, working in each component, it is enough if we prove the result assuming that $C$ is the intersection of a ball with the zero-set of an irreducible polynomial $F(x,y)$. We may further assume that $F$ is non-constant (as otherwise there is nothing to prove). Let $H$ and $n$ be as provided by Claim \ref{C: claim Fr}. So $F(x,y)=H(x,\Fr^n(y))$.  The set of points of the curve $H(x,z)=0$ for which $\frac{\partial H}{\partial z}\neq 0$ is definable and co-finite, and  we can apply the Implicit Function Theorem, \ref{implicitFunctionTheoremf} to any such point of $H(x,z)=0$. Composing the resulting function with $\Fr^{-n}$ shows that $F(x,\Fr^n(y))=0$ is $t$-analytic on a co-finite set, as claimed.   Taking 
    \[
    S:=\left \{(a,b^{-p^m}): H(a,b), \frac{\partial H}{\partial y}(a,b)\neq 0\right \}
    \] 
    satisfies the required properties, since -- if $C$ is irreducible --  the polynomial $H$ and the integer $n$, are invariant under any automorphism (of a saturated elementary extension) fixing $C$. 

\end{proof}

Since the set $S$ provided in the conclusion of the previous lemma is definable (over the same parameters as the curve), whenever $P$ is an elementary property shared by all unary analytic functions (e.g., open, locally constant if and only if constant, $n$-times differentiable for all $n$, etc.) we have the following elementary version of the Implicit Function Theorem: 
\begin{corollary}
    Fix an elementary property $P$ of definable analytic functions.  Let $\CK\models \mathrm{ACVF}$ and $C\sub K^n$ be an $A$-definable curve. Then there is some finite set $S\subseteq \acl(A)$ such that if $c\in C\setminus S$ then there exists a ball $B\ni c$ such that $C\cap B$ is the graph of a definable function with property $P$. 
\end{corollary}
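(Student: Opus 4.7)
The plan is to reduce to the complete-model case treated by Lemma~\ref{L: generically locally analytic} and transfer by elementary equivalence. The first step is to argue that the conclusion is expressible by a first-order schema over the parameter set $A$; once this is established, completeness of $\mathrm{ACVF}_{p,q}$ lets us pass to a complete $\Kk \equiv \CK$, verify the statement there, and then transfer the conclusion back to $\CK$.

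First-order expressibility hinges on a uniform bound on $|S|$. Inspecting the proof of Lemma~\ref{L: generically locally analytic}, the exceptional set arises as the singular locus of a polynomial $H(x,z)$, whose degree is controlled by the degree of the polynomial defining the Zariski closure of $C$. Because $C$ is given by a fixed formula, Proposition~\ref{decompositionProp} produces a uniform bound $N$ on the degree of this polynomial, and hence on $|S|$, depending only on the formula defining $C$. Combined with the elementarity of $P$, the assertion ``there exist at most $N$ points such that outside them, $C$ is locally the graph of a definable function satisfying $P$'' becomes a first-order statement over $A$.

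In the complete model $\Kk$, Lemma~\ref{L: generically locally analytic} supplies, away from a finite $S \subseteq \acl(A)$, a ball $B$ and a $t$-analytic function $f = \Fr^{-n} \circ g$ (with $g$ locally analytic) whose graph agrees with $C \cap B$. For $P$ to be witnessed in $\Kk$, one must verify that $P$ is preserved under pre-composition with $\Fr^{-n}$ -- a definable continuous bijection. For each of the listed examples this is direct (openness because $\Fr^{-n}$ is a homeomorphism, ``locally constant iff constant'' because $\Fr^{-n}$ is a bijection, and similarly for the other differentiability properties of interest). Elementary equivalence then transfers the first-order statement back to $\CK$, and any witnessing $S \subseteq \CK$ is automatically contained in $\acl(A)$: a non-algebraic witness would produce unboundedly many conjugates over $A$, contradicting the uniform bound $|S| \le N$.

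I expect the main technical point to be the per-$P$ verification of preservation under composition with $\Fr^{-n}$ in positive characteristic; this is routine for the cited examples but in principle must be checked case by case. A minor secondary point is extracting the uniform bound $N$ from the proof of Lemma~\ref{L: generically locally analytic} -- the argument there implicitly bounds $N$ in terms of the formula, but formalising this requires tracing Proposition~\ref{decompositionProp} and Claim~\ref{C: claim Fr} carefully.
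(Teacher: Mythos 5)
Your proposal follows the paper's argument: pass to a complete model $\Kk$ (using completeness of $\mathrm{ACVF}_{p,q}$), apply Lemma~\ref{L: generically locally analytic}, and transfer by first-order expressibility over the parameters of $C$. The paper obtains expressibility simply from the fact that the exceptional set $S$ is given by an explicit formula over those parameters (with uniform finiteness supplying a bound); you instead trace a degree bound through Proposition~\ref{decompositionProp} and Claim~\ref{C: claim Fr}, which amounts to the same thing with more bookkeeping.

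The one place where your write-up makes a claim that would actually fail is the assertion that preservation of $P$ under composition with $\Fr^{-n}$ is ``routine'' for each of the listed examples, and in particular for ``$n$-times differentiable for all $n$.'' In characteristic $p>0$ the map $\Fr^{-1}\colon x\mapsto x^{1/p}$ is not differentiable at any point: setting $k:=(x+h)^{1/p}-x^{1/p}$ gives $h=k^p$, so the difference quotient equals $k^{1-p}$, whose valuation tends to $-\infty$ as $h\to 0$. Thus $\Fr^{-n}\circ g$ is not differentiable for $n>0$ even when $g$ is analytic, and that property does not transfer; in positive characteristic the corollary must be read as requiring $P$ to hold of all $t$-analytic functions, not merely all analytic ones. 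This imprecision is already present in the paper's own parenthetical list of example properties, so you inherit it rather than introduce it, but your proposal explicitly promotes it to a step claimed to be easy, and that particular check fails. A secondary, easily repaired, slip: the argument that any witnessing $S$ is ``automatically'' contained in $\acl(A)$ by a conjugation count is not correct as phrased, since an arbitrary witnessing $S$ need not be $A$-invariant; the right fix is to take $S$ to be the set of points of $C$ at which the local conclusion fails, which is $A$-definable by definability of the valuation topology and elementarity of $P$, and which the transferred sentence shows to be finite, hence contained in $\acl(A)$.
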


In fact, since the valuation topology is definable, the same is true of type definable properties of analytic functions. This will not, however, be used in the present text. 

\subsection{The setting}\label{ss: the setting}
The tools developed in Section \ref{ss; complete models} as well as in Section \ref{BITIA} below are described in the affine context: we consider (mostly) definable subsets of $K$ and $K^2$ and their geometric properties.  All this work, and in particular our study of definable (partial) functions $f:K\to K$ and of 1-dimensional subsets of $K^2$ extends in the usual way to definable 1-dimensional manifolds. Local notions such as, continuity, openness (of mappings), differentiability, tangency, and analyticity (in complete models) extend automatically to definable manifolds. 

In the setting we are working in, we only have to deal with very simple manifolds, so instead of referring to the theory of definable manifolds, we explain the necessary definitions in the simple context in which they are needed. 

By what we have shown in Section \ref{ss: definable}, we may assume that the group $G$ we are working with is definable. By Theorem \ref{thmFiniteIndex} below\footnote{We note that the proof of Theorem \ref{thmFiniteIndex} is independent of any results proved in Section \ref{S: BaP}.}, after possibly passing to a quotient by a finite subgroup, there exists a  definable group $G_0\le G$ of finite index definably isomorphic to either a subgroup of $(K,+)$, a subgroup of $(K^*, \cdot)$ or to one of the groups $G_{m,a}$ for $a$ of strictly positive valuation (see Definition \ref{D: Gma}). Let $\iota: G_0\to H$ be such an isomorphism for some such group $H$.  Since $G_0$ is infinite, so is $H$. This implies that $H$ contains an open ball. Since $H$ is topological with respect to the affine topology, it follows that $H$ is, in particular, an open subset of $K$ (clearly, this can also be deduced from the classification of 1-dimensional abelian groups definable in ACVF). 

This implies that if $\alpha_1, \dots, \alpha_k$ are representatives of all the cosets of $G_0$ in $G$, then setting $\iota_i:\alpha_i G_0\to H$ to be $x\mapsto \iota(\alpha^{-1}x)$, we obtain a (definable) atlas for $G$, giving also a definable manifold structure to $G^n$ (any $n$). We topologise $G$ via these charts (declaring the chart maps to be homeomorphisms). A definable function $f: G\to G$ can be identified, through these charts, with a finite (disjoint) union of maps  $f_i: H\to H$ and all (local) properties of definable functions in $K$ extend, naturally, to $G$ via this identification. We will, therefore, use freely such notions as continuity, openness (of maps), differentiability, and analyticity (when it makes sense) for definable functions from $G$ to $G$. We remark that while the notion of the derivative of a function $f: G\to G$ at a point $x$ may depend on the choice of the charts $\iota_i$, notions such ``$f_1$ and $f_2$ have the same derivative at $x$'' are independent of such choices, and are -- ultimately -- all we need. For our purposes, it will suffice, however, to fix once and for all the charts $\iota_i$ and  set $\hat f(x):=\iota_i(f(\iota_j^{-1}(y)))$ where $y=\iota_j(x)$ and $f(x)\in \alpha_iG_0$. 

With this simple machinery, the definitions and results of Section \ref{ss; complete models} go through, unaltered. This is obvious for local results (as will be all results in Section \ref{BITIA}). For the only global results that we need (Lemma \ref{L: t-an} and its analogue Lemma \ref{L: generically locally analytic}) the transfer is obvious via finite atlases (as is our case). 

Note, in particular, that $H$ (the target of the definable isomorphism $\iota$) is -- in a complete model -- an analytic group (because the group operation coincides locally with one of addition, multiplication or twisted multiplication).  In order not to get into the details of discussing analyticity in two variables, for our purposes it suffices that $H$-translations are analytic, and that if two functions are analytic then so are their $H$-product and $H$-inverses. By what we have just said, the same remains true in $G$.  

\textbf{From now on we fix once and for all an atlas for $G$, as described above, and we understand all geometric notions in $G$ in terms of this manifold structure.} It may be worth pointing out, also, that the above arguments show that $\CG$ is a geometric structure (so, in particular, admits a natural notion of dimension, genericity etc.). To keep the notation simple, all local geometric arguments, henceforth, will be written in $K$, even when understood in $G$.

\section{Groups locally equivalent to $(K,+)$ or to $(K^*,\cdot)$}\label{EOTS}
In this section we study groups definable in ACVF which are locally equivalent to $(K,+)$ or to $(K, \cdot)$. This allows us, ultimately, to reduce Theorem \ref{main} to two specific cases that  will be treated separately. This is the only part of the paper that does not seem to extend to analytic expansions  of ACVF, since it builds on the fact that (by elimination of quantifiers) the model theoretic algebraic closure, $\acl(\cdot)$, coincides with the field theoretic algebraic closure. This allows us, as in \cite[\S 3]{HRPIGroups} to find a local finite-to-one homomorphism of a definable one dimensional  group $G$ with an algebraic group. \\

As already mentioned, the results of this section do not depend on anything proved in Section \ref{S: BaP}.

\subsection{On one dimensional algebraic groups.}  Throughout this subsection, we are working in the reduct of $\CK$ to the pure field language. 

Below, algebraic groups need not be connected, i.e., $H$ is an algebraic group if it is a constructible set together with an algebraic group operation. We denote by $H^0$ the Zaritsky-connected component of the identity (which is a subgroup). 
We often refer to $H^0$ as the connected component of $H$ and we say that $H$ is connected if $H=H^0$.

The following is well known.

\begin{fact}\label{finite groups}
A connected one dimensional algebraic group is  either an elliptic curve, or ACF-isomorphic to either $\mathbb G_a$ or $\mathbb G_m$.
\end{fact}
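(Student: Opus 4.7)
The statement is a classical theorem in the theory of algebraic groups, so my plan is to assemble two standard structure theorems rather than work from scratch. First I would invoke Chevalley's structure theorem for connected algebraic groups: any such $H$ fits into a canonical short exact sequence
\[
1 \to L \to H \to A \to 1
\]
with $L$ a connected linear (affine) algebraic subgroup and $A$ an abelian variety. Dimensions are additive in this sequence, and $\dim H = 1$, so either $L$ is trivial (forcing $H = A$, a 1-dimensional abelian variety, i.e., an elliptic curve by definition) or $A$ is trivial (forcing $H = L$, a connected 1-dimensional linear algebraic group). This dichotomy is exactly the split suggested by the statement.

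In the second case the plan is to invoke the classification of 1-dimensional connected linear algebraic groups over an algebraically closed field, which asserts that any such group is isomorphic to $\mathbb G_a$ or $\mathbb G_m$. Concretely, I would embed $H$ as a closed subgroup of some $\mathrm{GL}_n$ and use Jordan decomposition. If $H$ contains a nontrivial unipotent element, then the Zariski-closed subset of unipotent elements is a nontrivial closed connected subgroup which by the dimension count must equal $H$; a 1-dimensional connected unipotent group is then isomorphic to $\mathbb G_a$, e.g.\ via its exponential parametrization in a faithful representation. Otherwise every element is semisimple, so $H$ is a torus, and the only 1-dimensional torus is $\mathbb G_m$.

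I do not anticipate any real obstacle; the only nontrivial inputs are Chevalley's theorem (needed to rule out a 1-dimensional group that is neither complete nor affine) and the classification of 1-dimensional connected linear algebraic groups, both of which are standard textbook results and could equally well be cited directly. An alternative, more hands-on route would be to complete $H$ to a smooth projective curve $\bar H$ of some genus $g$, use the group structure to constrain $g$ and the number $|\bar H \setminus H|$ of boundary points (only $g=1,\, |\bar H \setminus H|=0$ or $g=0,\, |\bar H \setminus H|\in\{1,2\}$ are compatible with a group law), and read off the three cases; but this is essentially a repackaging of the same content.
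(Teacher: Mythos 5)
The paper states this fact without proof, noting only that it is well known; there is therefore no ``paper's proof'' to compare against. Your argument is a correct and standard way to fill in that citation: Chevalley's structure theorem reduces to the abelian variety case (giving an elliptic curve in dimension one) and the connected linear case, and the linear case is handled by the classification of one-dimensional connected linear algebraic groups over an algebraically closed field via Jordan decomposition. One tiny caveat worth keeping in mind, since the paper works in both characteristic zero and positive characteristic: the statement that every one-dimensional connected unipotent group is isomorphic to $\mathbb G_a$ is specific to perfect (in particular algebraically closed) base fields, as non-split forms of $\mathbb G_a$ exist over imperfect fields; your argument is fine here precisely because the base is algebraically closed, but that hypothesis is genuinely doing work. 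Your sketched alternative via completion to a smooth projective curve and the finiteness of automorphism groups in genus at least two is also a valid route and arguably more self-contained; either would serve as a proof of the fact the paper takes for granted.
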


The following result (also well known) follows from following two easy facts concerning  algebraically closed fields of characteristic $p>0$: For any $n\in \mathbb N$, the map  $x\mapsto  x^n$ is a surjective endomorphism of $\mathbb G_m$ whose kernel is a cyclic subgroup of order $n$;  and the map $x\mapsto x^{p^n}-x$ is a surjective endomorphism of $\mathbb G_a$  whose kernel is definably isomorphic to any subgroup of $\mathbb G_a$ of order $p^n$.  

\begin{fact}\label{finite quotients}
  If $G$ is either $\mathbb G_a$ or $\mathbb G_m$ and $L$ is a finite subgroup, then $G/L$ is definably isomorphic to $G$.
\end{fact}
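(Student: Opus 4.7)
My plan is to prove, in each case, the existence of a definable surjective group homomorphism $\phi\colon G \to G$ with kernel exactly $L$; the first isomorphism theorem then yields the desired definable isomorphism $G/L \cong G$.

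For $G = \mathbb{G}_m$: any finite subgroup of the multiplicative group of a field is cyclic, and each of its $n := |L|$ elements is a root of $x^n - 1$, a polynomial with at most $n$ roots in $K$. Hence $L = \mu_n$, the group of $n$-th roots of unity. The endomorphism $x \mapsto x^n$ furnished by the first ``easy fact'' has kernel exactly $\mu_n = L$ and is surjective since $K$ is algebraically closed, which finishes this case.

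For $G = \mathbb{G}_a$: in characteristic zero $\mathbb{G}_a$ is torsion-free, so $L = \{0\}$ and there is nothing to show. Assume $\text{char}(K) = p > 0$; then every element of $\mathbb{G}_a$ has order dividing $p$, so $L$ is an elementary abelian $p$-group with $|L| = p^n$ for some $n$. I would define the ``kernel polynomial''
\[
\phi_L(x) := \prod_{\ell \in L}(x - \ell) \in K[x],
\]
a monic polynomial of degree $p^n$ whose zero set is exactly $L$. Over an algebraically closed field, $\phi_L$ is a surjective map $K \to K$ with kernel $L$, so it remains only to verify that $\phi_L$ is a group homomorphism, i.e.\ additive.

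The verification of additivity is the one point that requires a short calculation, but I expect it to be the routine crux rather than a genuine obstacle. The key observation is that for any $\ell_0 \in L$ the substitution $\ell \mapsto \ell - \ell_0$ reindexes the product to give $\phi_L(x + \ell_0) = \phi_L(x)$. Consequently, fixing $x$ and viewing $P(y) := \phi_L(x+y) - \phi_L(x) - \phi_L(y)$ as a polynomial in $y$, the leading $y^{p^n}$ terms cancel so $\deg_y P \leq p^n - 1$, while $P$ vanishes on all $p^n$ elements of $L$. Hence $P \equiv 0$, i.e.\ $\phi_L$ is additive (applied to $L = \mathbb{F}_{p^n}$, this recovers the specific endomorphism $x \mapsto x^{p^n} - x$ mentioned in the second ``easy fact''). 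With additivity established, the first isomorphism theorem concludes the proof.
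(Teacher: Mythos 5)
Your proof is correct. For $\mathbb{G}_m$ it reproduces exactly the argument the paper gestures at: a finite subgroup of $K^*$ is cyclic, so $L=\mu_n$ is the kernel of $x\mapsto x^n$, which is surjective because $K$ is algebraically closed. For $\mathbb{G}_a$ your route is genuinely more complete than what the paper indicates. The paper only cites the Artin--Schreier map $x\mapsto x^{p^n}-x$, whose kernel is $\mathbb F_{p^n}$, and remarks that any $L\le\mathbb G_a$ of order $p^n$ is definably isomorphic (as an abstract finite group) to $\mathbb F_{p^n}$; but a group isomorphism of kernels does not on its own transport the quotient, since $\mathrm{Aut}(\mathbb G_a)=\mathbb G_m$ acting by scaling need not move a general $\mathbb F_p$-subspace of order $p^n$ onto $\mathbb F_{p^n}$. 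Your construction of the \emph{kernel polynomial} $\phi_L(x)=\prod_{\ell\in L}(x-\ell)$ sidesteps this entirely: your additivity argument is correct (the translation invariance $\phi_L(x+\ell_0)=\phi_L(x)$ for $\ell_0\in L$, together with the degree bound $\deg_y P\le p^n-1$ and the $p^n$ roots $P|_L=0$, forces $P\equiv 0$, working over $K(x)$ so that the vanishing holds identically in $x$), and $\phi_L$ is manifestly surjective with kernel exactly $L$. This is the classical Ore-style additive-polynomial argument; it is more elementary, directly handles arbitrary finite $L$, and specializes to the paper's $x^{p^n}-x$ when $L=\mathbb F_{p^n}$, as you note.
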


For the next proposition, we remind that two groups $H_1, H_2$ are \textit{(definably) isogenous} if there is a (definable) subgroup $C\sub H_1\times H_2$ whose projection onto both coordinates are surjective with finite kernel.

\begin{proposition}\label{equivalent groups}
Let $(H, \oplus)$ and $(H', \otimes)$ be $F$-locally equivalent 1-dimensional algebraic groups over some fixed algebraically closed field $F$.  
Then: 

\begin{enumerate}
\item 
$H^0$ and $(H')^0$ are definably isogenous. 

\item $H^0$ and $(H')^0$ are isomorphic, or they are both elliptic curves.  
\end{enumerate}
\end{proposition}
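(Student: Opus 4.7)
The plan is to extract from the local equivalence a definable algebraic correspondence between $H$ and $H'$ that is generically compatible with the two group laws, then invoke a group chunk theorem to promote it to an isogeny between the connected components, and finally use the classification of one-dimensional algebraic groups to conclude (2).

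First, since $F$ is algebraically closed and we work in the pure field language, the model-theoretic algebraic closure coincides with the field-theoretic one, so the conditions $\acl(aA)=\acl(\alpha A)$, $\acl(bA)=\acl(\beta A)$ and $\acl(a\oplus b\,A)=\acl(\alpha\otimes\beta\,A)$ are genuine algebraic inter-algebraicities. This produces a constructible correspondence $R\subseteq H\times H'$, defined over $A$, whose projections are dominant with generically finite fibres and such that the triple $\bigl((a,\alpha),(b,\beta),(a\oplus b,\alpha\otimes\beta)\bigr)$ belongs to $R\times R\times R$. After replacing $R$ by its irreducible component through $(a,\alpha)$ and translating by $(e_H,e_{H'})$, I may assume $R$ is irreducible and contains the identity pair, and then restrict attention to $H^0$ and $(H')^0$.

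The next step is to upgrade $R$ to the graph of a homomorphism. The compatibility at one generic, irreducibility of $R$, and the Zariski denseness of generic triples force
$$R(x,\xi)\wedge R(y,\eta)\Rightarrow R(x\oplus y,\,\xi\otimes\eta)$$
on a Zariski-open subset of $H^0\times (H')^0\times H^0\times (H')^0$. This is the hypothesis of a Weil--Hrushovski group chunk (equivalently, of the group configuration theorem in ACF), so $R$ agrees generically with the graph of a regular surjective homomorphism $\varphi\colon H^0\to (H')^0$. Because the projections of $R$ are generically finite, $\ker\varphi$ is finite, so $\varphi$ is an isogeny, proving (1).

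For (2), Fact~\ref{finite groups} classifies $H^0$ and $(H')^0$ as either an elliptic curve, $\mathbb G_a$, or $\mathbb G_m$. An isogeny pins down the type: $\mathbb G_a$ (unipotent, affine), $\mathbb G_m$ (a torus), and elliptic curves (projective) are pairwise non-isogenous. Hence $H^0$ and $(H')^0$ are of the same type. When both are $\mathbb G_a$ or both are $\mathbb G_m$, we have $(H')^0\cong H^0/\ker\varphi$, and Fact~\ref{finite quotients} yields $H^0\cong (H')^0$. The remaining case is that both are elliptic curves, which is precisely the exception the statement allows. The main obstacle I expect is the group chunk step: promoting the generic compatibility at a single triple to the open condition needed to invoke Weil--Hrushovski, while keeping track of connected components and of the finiteness of the kernel. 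The rest is classification bookkeeping.
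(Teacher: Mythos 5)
Your overall plan (extract a correspondence, upgrade it to an isogeny, then classify one-dimensional groups) is reasonable, but the step you yourself flag as the main obstacle is where the argument actually breaks. You assert that a single irreducible constructible correspondence $R\subseteq H\times H'$ contains all three pairs $(a,\alpha)$, $(b,\beta)$, $(a\oplus b,\alpha\otimes\beta)$, and that compatibility at this one generic triple forces $R(x,\xi)\wedge R(y,\eta)\Rightarrow R(x\oplus y,\xi\otimes\eta)$ on a dense open set. Neither claim follows from the definition of local equivalence. The three inter-algebraicities are witnessed by potentially three different formulas $\theta$, $\mu$, $\nu$, so there is no canonical single irreducible $R$ through all three pairs; if you take a union you lose irreducibility and with it the ``unique generic point'' argument. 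And even if you fix $R$ to be the locus of $(a,\alpha)$ over $F$ (as you do when passing to the irreducible component through $(a,\alpha)$), there is nothing forcing $(b,\beta)$ or $(a\oplus b,\alpha\otimes\beta)$ to lie on that same locus: $\alpha\otimes\beta$ is merely in $\acl(a\oplus b\,F)$, which is compatible with many different irreducible correspondences over $F$. Propagating a single algebraic relation to generic compatibility with the group laws is precisely the content of the proposition, and the Weil--Hrushovski group chunk machinery takes such generic multiplicativity as a hypothesis, not a conclusion. As written, the argument is circular at exactly this point.

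The paper avoids this by never trying to exhibit a graph-like $R$. It works in the ambient group $H\times H'$ and forms the model-theoretic stabilizer $\Delta=\mathrm{Stab}(\tp(a,a'/F))$, which is automatically a definable subgroup of $H\times H'$ because ACF is $\omega$-stable. The hypothesis of local equivalence is used only to produce one explicit realization of $\mathrm{Stab}(p)$ of full dimension with infinite coordinate projections (namely $(b_1\ominus b,\ b_1'\otimes(b')^{-1})$ for a suitable independent copy $(b_1,b_1')$ of $(b,b')$ over $(c,c')$). Connectedness of stabilizers in $\omega$-stable groups then shows the projections are exactly $H^0$ and $(H')^0$, so $\Delta$ is the required isogeny. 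No generic multiplicativity of a correspondence ever has to be verified. For part (2), your invocation of pairwise non-isogeny of $\mathbb G_a$, $\mathbb G_m$ and elliptic curves is fine and parallels the paper's more explicit torsion count; the delicate part is (1), and to repair your route you would need either to adopt the stabilizer argument, or to justify carefully why a single irreducible $R$ can be chosen through all three pairs and why $(a\oplus b,\alpha\otimes\beta)$ is then a generic point of it.
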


\begin{proof}
Let $(a,b,a\oplus b)$ and $(a', b', a'\otimes b')$ be equivalent group triples over $F$ (so the equivalence is witnessed by an algebraic correspondence over $F$) as provided by local equivalence.

Consider the group $H\times H'$ and the type $p:=\tp(a,a'/F)$. Recall that the stabilizer of $p$ is defined as
\[\mathrm{Stab}(p):=\{g: p=gp\}\]
where $gp:=\{\phi(z)\in \mathcal L_F\mid \phi(gz)\in p\}$.  Because $H, H'$ are $\omega$-stable $\mathrm{Stab}(p)$ (or, equivalently, the stabilizer of the unique global extension of $p$ if one is following \cite{marker}) is a definable subgroup of $H\times H'$ (see, e.g., \cite[Theorem 7.1.10 ]{marker}). 

\begin{claim}
 The dimension of  $\mathrm{Stab}(p)$ is equal to the dimension of $p$ and the image of the projection on each coordinate is infinite.   
\end{claim}
\begin{claimproof}
Let $c=a\oplus b$ and $c'=a'\otimes b'$, and let $a_1,a_1', b_1, b_1'$ be realizations of $\tp(a,a',b,b'/cc')$ independent of $a,a',b,b'$ over $cc'$. 

It follows that $a_1\oplus b_1\oplus (b)^{-1}=a$ and $a_1'\oplus b_1'\otimes (b')^{-1}=a'$ so that $(b_1\oplus (b)^{-1},  b_1'\otimes (b')^{-1})\models \mathrm{Stab}(p)$. Since $b_1\not\in \acl(b)$ and $b_1'\not\in \acl(b')$ this shows that the projection of $\mathrm{Stab}(p)$ on each coordinate is infinite.   In fact,
\begin{flalign*}
\dim(p) & =  \dim\left(\left(b_1,b_1'\right)/cc'\right)    \\
 & =  \dim\left(\left(b_1,b_1'\right)/cc'aa'bb'\right)    \\
 & =  \dim\left(\left(b_1\oplus b^{-1}, b_1'\otimes \left(b'\right)^{-1}\right)/cc'aa'bb'\right)    \\
 & \geq  \dim\left(\left(b_1\oplus b^{-1}, b_1'\otimes \left(b'\right)^{-1}\right)\right)    \\
 & =  \dim\left(\mathrm{Stab}\left(p\right)\right).    \\
\end{flalign*}

A similar proof (\cite{marker}[Theorem 7.1.10]) shows that the dimension of $\mathrm{Stab}(p)$ is smaller than or equal to the dimension or $p$. 
\end{claimproof}

$\mathrm{Stab}(p)=\Delta$ is a subgroup of $H\times H'$ with infinite projection on each coordinate. Thus, $\Delta$ is an isogeny between $\pi_1(\Delta)$ and $\pi_2(\Delta)$ which are infinite subgroups of $H$, $H'$ so they must have finite index.

By \cite[7.1.12]{marker} $\Delta$ is connected, so the projections $\pi_1(\Delta)$ and $\pi_2(\Delta)$ are infinite connected subgroups of $H$ and $H'$ hence must  equal  $H^0$ and $(H^0)'$ respectively, and $\Delta$ defines an isogeny between $H^0$ and $(H^0)'$.

This proves the first part of the statement.

\medskip

To prove the last statement, for simplicity of notation assume that $H=H^0$ and $H'=(H')^0$, and consider 

\[
\ker_{\Delta}(\pi_1):=\{x'\in H'\mid (e_H,x')\in \Delta\}\] and 
\[
\ker_{\Delta}(\pi_2):=\{x\in H\mid (x,e_{H'})\in \Delta\}\] then $\ker_{\Delta}(\pi_2)\times \ker_{\Delta}(\pi_1)$ is a subgroup of $\Delta$ and 
\[\Delta/\left(\ker_{\Delta}\left(\pi_2\right)\times \ker_{\Delta}\left(\pi_1\right)\right)\] is the graph of an isomorphism between $H/\ker_{\Delta}(\pi_2)$ and $H'/\ker_{\Delta}(\pi_1)$.

These are all connected one dimensional algebraic groups, which we know are either $\mathbb G_a$, $\mathbb G_m$, or an elliptic curve. So by Fact \ref{finite quotients}, we only need to prove that the quotient of an elliptic curve $E$ by a finite subgroup $\Lambda$ is not isomorphic to  $\mathbb G_a$ nor $\mathbb G_m$ and that $\mathbb G_a$ and $\mathbb G_m$ are not (abstractly) isomorphic. This follows by considering the $m$-torsion points for any $m$ relatively prime to the characteristic of the field and to the order of $\Lambda$: By  \cite[Corollary III,6.4]{Silverman} the $m$-torsion points in an elliptic curve (in particular of $E$ and by choice of $m$ of $E/\Lambda$) are isomorphic to $\mathbb Z/m\mathbb Z\times \mathbb Z/m\mathbb Z$ whereas there are no non-trivial $m$-torsion points in  $\mathbb G_a$ and the $m$-torsion points in $\mathbb G_m$ are isomorphic to $\mathbb Z/m \mathbb Z $.
\end{proof}

\begin{remark}\label{generic equivalent triples}
Assume that  $(H, \oplus)$ and $(H', \otimes)$  are connected locally equivalent one dimensional algebraic groups, witnessed by  $(a,b,a\oplus b)$ and $(a', b', a'\otimes b')$. Let $\theta(x,y)$, $\mu(x,y)$ and $\nu(x,y)$ be field formulas implying the inter-algebraicity of $a$ and $a'$, $b$ and $b'$ and $a\oplus b$ and $a'\otimes b'$, respectively.

The formula (in the language of rings)
\[
(\exists z,w\in H')( \theta(x,z)\wedge \mu(y,w) \wedge \nu\left(x\oplus y, z\otimes w\right))
\]
is  satisfied by $(a,b, a\oplus b)$ and implies the existence of a group triple in $H'$ equivalent to $(a,b, a\oplus b)$. Since $H$ is strongly minimal (with its ACF induced structure)  and $a,b$ were generic independent, this formula is satisfied by any algebraically independent elements in $H$. 

So given any $\alpha, \beta$ generic independent elements in $H$, there are $\alpha', \beta'$ in $H'$ such that $(\alpha, \beta, \alpha\oplus \beta)$ and $(\alpha', \beta', \alpha'\otimes \beta')$ are locally equivalent group triples. \end{remark}

\subsection{Locally affine groups}

In this section, we classify definable groups locally equivalent to $(K,+)$ or to $(K^*, \cdot)$. For the statement we need: 
\begin{definition}\label{D: Gma}
For $a\in \mathbb G_m$ with positive valuation, the group $\mathbb G_{m,a}$  is the structure with universe 
\[\{x\mid 0\leq v(x) < v(a)\}\] and group operation 
\[
x\odot y:=\begin{dcases*}
x\cdot y, & v(x)+v(y)<v(a) \\
x\cdot y\cdot a^{-1} 
   & \text{Otherwise}.
\end{dcases*}
\]

\end{definition}

In those terms, we show: 

\begin{thm}\label{thmFiniteIndex}
Let $(G,\oplus)$ be an abelian $\CK$-definable group. 

\begin{itemize}
\item If  $G$ is locally equivalent to $\mathbb G_a$, then there is a definable subgroup $G_0\le G$ of finite index and a finite subgroup $F$ of $G_0$ such that $G_0/F$ is definably isomorphic to a subgroup of the additive group.

\item  If $G$ is locally equivalent to $\mathbb G_m$ then there is a definable subgroup $G_0\le G$ of finite index and a subgroup $F$ such that $G_0/F$ is isomorphic to either a subgroup of the multiplicative group $\mathbb G_m$, or to $\mathbb G_{m,x}$ for some $x\in \mathbb G_m$.
\end{itemize}

\end{thm}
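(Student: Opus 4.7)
The plan is to follow the Hrushovski strategy (as in \cite[\S 3]{HRPIGroups}): convert the given local equivalence into a generic algebraic correspondence, globalize it via a group-chunk argument to a definable group-correspondence $\Delta\subseteq G\times H$, and then read off $G_0$ and $F$ from the projections of $\Delta$. Concretely, local equivalence with $H\in\{\mathbb{G}_a,\mathbb{G}_m\}$ provides, over some parameter set $A$, independent generics $a,b\in G$ and $a',b'\in H$ satisfying $\acl(aA)=\acl(a'A)$, $\acl(bA)=\acl(b'A)$ and $\acl((a\oplus b)A)=\acl((a'\cdot b')A)$. Since ACVF is algebraically bounded, these equalities are witnessed by $A$-definable finite-to-finite algebraic correspondences; by compactness these combine into a single $A$-definable relation $\Phi\subseteq G\times H$ such that $\Phi$ is finite-to-finite on a relatively Zariski-open set, and such that $(a,a'),(b,b'),(a\oplus b,a'\cdot b')\in\Phi$ for the distinguished generic triples.

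The next step is to promote $\Phi$ to a definable subgroup $\Delta\leq G\times H$. For this I would consider the ``generic graph'' determined by $\Phi$ and imitate the proof of Proposition \ref{equivalent groups}: work in a sufficiently saturated elementary extension, take $\Delta$ to be the (finite union of cosets of a) stabilizer, in $G\times H$, of the generic type $p=\tp(a,a'/A)$. The group-theoretic computation from the proof of Proposition \ref{equivalent groups} (which is algebraic in nature and uses only exchange plus additivity of dimension, both of which hold in ACVF) shows $\dim(\Delta)=\dim(p)=1$ and that both projections of $\Delta$ are surjective onto infinite, hence finite-index, definable subgroups of $G$ and of $H$ with finite kernel. (This is where a little care is needed: ACVF is not $\omega$-stable, so one cannot quote $\omega$-stable stabilizer theorems verbatim; instead one exhibits $\Delta$ directly from the group-chunk data as in the Weil/Hrushovski globalization, using that the correspondence is generically an algebraic homomorphism since $G$ and $H$ are algebraically bounded.)

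Setting $G_0:=\pi_G(\Delta)$, $F:=\ker(\pi_H|_\Delta)\cap G_0$, we obtain a finite-index definable subgroup $G_0\leq G$ and a finite $F\leq G_0$ such that $G_0/F$ embeds definably into $H$ as a definable subgroup. In the additive case $H=\mathbb{G}_a$, any definable subgroup of $\mathbb{G}_a(K)$ is simply an additive subgroup of $K$, which gives the first statement. In the multiplicative case, one needs the classification of $\CK$-definable subgroups of $\mathbb{G}_m(K)$: such a subgroup is either a subgroup of $\mathbb{G}_m$ in the literal sense (e.g.\ $\mathcal{O}^\times$ times a cyclic piece) or it arises as a ``twisted'' group, obtained by identifying a fundamental annulus $\{y:0\leq v(y)<v(x)\}$ under a shift by $x$, which is exactly the group $\mathbb{G}_{m,x}$ of Definition \ref{D: Gma}. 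A direct case analysis, separating whether the image of $\pi_H|_\Delta$ is bounded above in valuation or wraps around, yields the desired dichotomy.

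The main obstacle is the second step: showing that the generic correspondence $\Phi$ can be packaged into an honest definable subgroup $\Delta$ of $G\times H$ with finite-index projection. In $\omega$-stable structures this is essentially automatic via stabilizers of types, but in the ACVF setting one must argue more carefully; the key leverage is that algebraic boundedness lets us treat $\Phi$, on its generic part, as an algebraic variety, so that a classical Weil-style globalization produces the required subgroup. Once $\Delta$ is in hand, the identification of $G_0/F$ with a subgroup of $\mathbb{G}_m$ or with $\mathbb{G}_{m,x}$ is a concrete, if slightly involved, computation using the description of the $\CK$-definable subgroups of $\mathbb{G}_a$ and $\mathbb{G}_m$.
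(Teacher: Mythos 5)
Your outline captures the shape of the argument — use the local equivalence to produce a correspondence, globalize it to a group, read off $G_0$ and $F$ — and it agrees with the paper up to the point of globalization. But there is a genuine gap exactly at the step you flag as "the main obstacle," and it is not a matter of "arguing more carefully": the globalization you want is simply not available, and that failure is the whole reason $\mathbb G_{m,x}$ appears in the statement.

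In ACVF (unlike the $\omega$-stable case), the group-chunk machinery does not produce a definable subgroup $\Delta\le G\times H$. What one actually gets, via Proposition~\ref{factAlgebraicGrouChunkMontenegro} (Montenegro--Onshuus), is a \emph{type-definable} subgroup $D\le G$ of bounded index together with a definable finite-to-one homomorphism $\phi\colon D\to H$. The passage from the type-definable $D$ to a definable finite-index $G_0$ is the crux of the proof, and it is governed by the structure of $\phi(D)$ as a type-definable subgroup of $H$. By Fact~\ref{fact Subgroups Multiplicative Acosta}, there are two cases: if $\phi(D)$ is an intersection of definable subgroups, then Claim~\ref{claimIntersectionOfGroups} (following Acosta) extends $\phi$ to a definable embedding of a definable finite-index $C\le G$ into $H$, and you land in your "honest subgroup of $\mathbb G_a$ or $\mathbb G_m$" case. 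But when $H=\mathbb G_m$ the type-definable image may instead be a small intersection of the convex groups $o(a_i)$, which is \emph{not} sandwiched between definable subgroups of $\mathbb G_m$. In that case there is no definable $\Delta\le G\times\mathbb G_m$ to be found, and your construction of $G_0=\pi_G(\Delta)$, $F=\ker(\pi_H|_\Delta)$ never gets off the ground. The paper handles this case by a direct analysis: extending the local inverse $\psi$ step by step along $X_{2^k/N}(a)$, locating a nontrivial element of the kernel (Claim~\ref{claimUniqueInKernel}), and exhibiting the quotient as $\mathbb G_{m,b}$.

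This also exposes a concrete error in your final paragraph: $\mathbb G_{m,x}$ is \emph{not} a definable subgroup of $\mathbb G_m$. Its universe is the annulus $\{y:0\le v(y)<v(x)\}$, which is not multiplicatively closed, and its group operation is the twisted multiplication of Definition~\ref{D: Gma}, not the restriction of $\cdot$. So the dichotomy in the multiplicative case cannot be read off from "a classification of $\CK$-definable subgroups of $\mathbb G_m$"; it comes from the dichotomy for \emph{type-definable} subgroups, which is where the non-definable small intersections $\bigcap o(a_i)$ live. Relatedly, your remark that "the correspondence is generically an algebraic homomorphism since $G$ and $H$ are algebraically bounded" conflates definable with algebraic: algebraic boundedness of the theory does not make the definable group $G$ an algebraic group, and the Weil-style globalization in the algebraic category would at best give you an algebraic $H$, which the paper already obtains; it does not give you a definable subgroup of $G\times H$.
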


The rest of this section is dedicated to the proof of the theorem. First, we need some definitions for the statement of the next fact.

\begin{definition}
Let $a\in K$ with $v(a)>0$. Then $o(a)$ is the type definable multiplicative subgroup, consisting of the set of elements $x$ such that $|nv(x)|<v(a)$ for all $n\in \mathbb N$.
\end{definition}

The next Fact follows from Propositions 36 and 46 of \cite{acostaACVF}.

\begin{fact}\label{fact Subgroups Multiplicative Acosta}
    \begin{itemize}
       \item Any type definable subgroup of $\mathbb G_a$ is the intersection of definable subgroups.
        \item A type definable subgroup of $\mathbb G_m$ is either an intersection of definable subgroups or a small intersection of groups of the form $o(a)$. 
     \end{itemize}
\end{fact}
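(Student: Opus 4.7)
My strategy is to reduce both clauses to a direct classification of definable subgroups of $\mathbb{G}_a$ and $\mathbb{G}_m$, and then invoke compactness to separate an arbitrary $g$ outside the given type-definable subgroup $H$ by a definable subgroup still containing $H$. By Proposition \ref{decompositionProp}, definable subsets of $K$ are finite unions of Swiss Cheeses, which severely constrains the shape of any definable subgroup.

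For the additive case, I would first show that every definable subgroup of $(K,+)$ is, up to finite index, one of $\{0\}$, $K$, or an additive ball $B_{\geq \gamma}(0)$ or $B_{>\gamma}(0)$ with $\gamma \in \Gamma$. This should fall out of the Swiss Cheese decomposition: the connected component of $0$ inside a candidate subgroup $H'$ must be a ball (closure under $\pm$ leaves no room for interior holes), and the remainder consists of finitely many cosets of this ball. Given a type-definable subgroup $H=\bigcap_{i\in I}\phi_i(K)$ and $g\notin H$, pick $i$ with $g\notin \phi_i(K)$. After replacing $\phi_i$ by $\phi_i(x)\wedge\phi_i(-x)$ we may assume $\phi_i(K)$ is symmetric; its Swiss-Cheese decomposition then produces a symmetric ball $B\ni 0$ contained in $\phi_i(K)$, and an easy iteration on $\phi_i$ lets one extract a definable subgroup separating $H$ from $g$. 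Running over all $g\notin H$ writes $H$ as an intersection of definable subgroups.

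For the multiplicative case, I would exploit the short exact sequences
\[
1\to \mathcal{O}^*\to K^*\xrightarrow{v}\Gamma\to 0,\qquad 1\to 1+\mathfrak{m}\to \mathcal{O}^*\to \textbf{k}^*\to 1,
\]
decomposing $H\leq K^*$ via $v(H)\leq\Gamma$, the residue image of $H\cap\mathcal{O}^*$ in $\textbf{k}^*$, and the infinitesimal part $H\cap(1+\mathfrak{m})$. Type-definable subgroups of the divisible ordered abelian group $\Gamma$ are convex and hence classified by cuts. If $v(H)$ is cut out by a definable convex subgroup, pulling back and combining with the analysis in $\textbf{k}^*$ (an ACF-definable subgroup of a strongly minimal group) and in $1+\mathfrak{m}$ (which is additive in nature, so handled by the first clause) expresses $H$ as an intersection of definable subgroups. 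If instead $v(H)$ is cut out by an ``irrational'' cut, then the cut is naturally the small intersection of convex subgroups $\{\gamma: |n\gamma|<v(a)\}$ as $a$ ranges over a suitable small set, and pulling back recovers exactly the $o(a)$ description in the statement.

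The main obstacle is the irrational-cut case: such an $H$ genuinely need not be an intersection of definable subgroups, which is why the second clause admits the disjunction. Ensuring that the small intersection on the $\Gamma$-side and the behaviour of $H$ on $\mathcal{O}^*$ can be packaged into a \emph{single} small intersection of $o(a)$'s---rather than a more complex hybrid object---is the technical heart of the argument, and is where I would expect Acosta's bookkeeping to do most of the work.
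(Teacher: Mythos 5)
The paper does not prove this statement; it cites Propositions~36 and~46 of \cite{acostaACVF} and leaves the classification entirely to Acosta's work, so there is no internal proof to compare against. Taken on its own merits, your sketch is directionally sensible but has gaps that are not merely ``bookkeeping.''

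In the additive case, the reduction to ``classify definable subgroups, then separate by compactness'' is the right strategy, but the compactness step is under-specified in a way that matters. You know $g\notin\phi_i(K)$ with $H\subseteq\phi_i(K)$, but $\phi_i(K)$ is not a subgroup, and extracting a \emph{definable subgroup that still contains $H$ and still avoids $g$} is exactly the content one must prove. Knowing $\phi_i(K)$ contains a symmetric ball $B\ni 0$ does not help as stated, since $B$ need not contain $H$; one needs something like ``for every definable $X\supseteq H$ there is a definable subgroup $G'$ with $H\subseteq G'\subseteq X$,'' and that is where the classification of definable subgroups and a careful saturation/chain argument must be deployed. This is a standard but nontrivial step, and ``an easy iteration'' is not an argument.

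In the multiplicative case there is a genuine unacknowledged gap: the claim that the study of type-definable subgroups of $1+\mathfrak{m}$ is ``additive in nature, so handled by the first clause.'' There is no definable isomorphism in ACVF between $1+\mathfrak{m}$ and an additive group: the formal logarithm is not definable, and in positive characteristic the two groups are not even abstractly isomorphic ($1+\mathfrak{m}$ is $p$-divisible with trivial $p$-torsion while any infinite subgroup of $(K,+)$ in characteristic $p$ is an elementary abelian $p$-group). So the first clause cannot be invoked here; the structure of definable and type-definable subgroups of $1+\mathfrak{m}$ must be analysed from scratch. Relatedly, a type-definable subgroup $H\le K^*$ is not determined by the three traces ($v(H)$, residue image, $H\cap(1+\mathfrak{m})$); the short exact sequences need not split at the level of $H$, so ``pulling back and combining'' is not automatic. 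You do flag the packaging of everything into a single small intersection of $o(a)$'s as the technical heart and defer to Acosta, which is fair, but the $1+\mathfrak{m}$ claim is a concrete error rather than a deferred technicality.
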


We  need the following result, which is an easy application of   \cite[Theorem 2.19]{montenegroOnshuus}. We give a more general formulation than what is needed in the sequel, since it may be applicable in other contexts of interest. Since we only need the conclusion of the proposition, and the proof involves some rather heavy terminology that is not need anywhere else in the paper, we refer the reader to  \cite{montenegroOnshuus} for the definitions.

\begin{proposition}\label{factAlgebraicGrouChunkMontenegro}
    Let $T$ be a dependent (countable) theory expanding the theory of fields. Assume that $T$ is algebraically bounded and that any model of $T$ is definably closed in its algebraic closure. Let $G$ be a definably amenable group, definable in some $\aleph_0$-saturated model $M$ of $T$. Then there is an algebraic (algebraically connected) group $H$, a type definable subgroup $D$ of $G$ of bounded index, and a definable finite-to-one group homomorphism from $D(M)$ to $H(M)$.
\end{proposition}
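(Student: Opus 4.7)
The plan is to extract the statement directly from Theorem 2.19 of \cite{montenegroOnshuus} by verifying that the hypotheses of that theorem are satisfied, and then translating its conclusion into the language of the present paper. First I would observe that the hypotheses on $T$ (dependence, algebraic boundedness, and definable closure of models inside their algebraic closures) are exactly the standing assumptions needed to invoke the group-chunk machinery of \cite{montenegroOnshuus}; in particular, algebraic boundedness ensures that the model-theoretic $\acl$ agrees with the field-theoretic $\acl$, which is what allows the comparison between $\acl$-dimension on $G$ and the Zariski dimension of any algebraic group into which we hope to map.

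Next, I would use definable amenability of $G$ to produce a well-behaved global notion of genericity. Concretely, definable amenability gives an $f$-generic (or finitely satisfiable) Keisler measure on $G$, and in a dependent theory this pins down a canonical type-definable subgroup of bounded index, namely $G^{00}$ (the connected component in the sense of type-definable subgroups of bounded index). This $G^{00}$ will play the role of the group $D$ in the statement: its generic types, in the $f$-generic sense, are translation-invariant, so products and inverses of generic independent elements are again generic. This is the input that Theorem 2.19 of \cite{montenegroOnshuus} converts, through a Weil-style group-chunk argument performed over the field, into an algebraically connected algebraic group $H$ together with a type-definable group homomorphism $\varphi\colon D(M)\to H(M)$.

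The finite-to-one property of $\varphi$ is where the hypothesis of algebraic boundedness is used crucially. Since model-theoretic algebraic closure on $M$ coincides with field-theoretic algebraic closure, the fibres of $\varphi$ over a point in $H(M)$ are contained in a model-theoretic algebraic closure of finitely many parameters, which is the same as a field-theoretic algebraic closure of those parameters, and in particular is bounded by the degree of the minimal polynomial relating a generic element of $D$ to its image. The hypothesis that models of $T$ are definably closed in their algebraic closures ensures the homomorphism is actually definable (not just $\bigvee$-definable), which is the last point needed for the statement.

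The main obstacle I anticipate is not in the logical skeleton above but in checking that the formal output of Theorem 2.19 of \cite{montenegroOnshuus} matches the statement needed here, in particular producing a homomorphism into an honest algebraic group (as opposed to, say, a constructible group chunk). If the reference gives only a local group homomorphism on a generic piece, I would need to globalise it on $D$ using the translation action of $D$ on itself, and then verify that the resulting map remains finite-to-one; this is again straightforward from algebraic boundedness, since the kernel of a definable group homomorphism with finite generic fibres is itself finite.
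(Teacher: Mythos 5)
Your overall strategy matches the paper's: invoke \cite[Theorem~2.19]{montenegroOnshuus} and read off the conclusion. But you treat that invocation as essentially automatic, and that is where your write-up diverges from what actually has to be done. Theorem~2.19 of \cite{montenegroOnshuus} is not stated in terms of ``definably amenable group in a dependent theory''; its hypotheses are an $M$-invariant $S1$-ideal $\mu_G$ stable under left and right translation, the existence of suitably generic types for that ideal, and a technical ``condition~(F)'' on types. The paper spends most of the proof precisely on checking that these hold: definable amenability plus NIP gives $f$-generic types (Hrushovski--Pillay), which by a lemma of \cite{montenegroOnshuus} upgrade to \emph{bi}-$f$-generic types; the ideal of formulas not extending to a bi-$f$-generic type is shown to be an $M$-invariant $S1$-ideal using results of Chernikov--Simon and \cite{montenegroOnshuus}; and condition~(F) is deduced from strict non-forking over a model in dependent theories (via \cite[Proposition~5.44]{Si}). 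None of these steps appears in your proposal, and they are exactly the content that makes the proposition non-trivial.

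Two further specific inaccuracies. First, you identify the type-definable subgroup $D$ with $G^{00}$. The paper does not do this and it is not free: the theorem you cite produces a $\mu_G$-generic type-definable subgroup, and the paper then appeals to \cite[Theorem~1.2]{ChSi} (boundedness of index of $f$-generic type-definable subgroups) to conclude $D$ has bounded index; equality with $G^{00}$ is never asserted and is not needed. Second, you derive the finite-to-one property of the homomorphism from algebraic boundedness after the fact, but finite-to-one is already part of the conclusion of the cited theorem (algebraic boundedness enters as a \emph{hypothesis} of that theorem, feeding into the Weil-style group-chunk construction, not as a separate tool applied to the output). Your paragraph on globalising a local group chunk addresses a worry that does not arise once the right version of the theorem is stated. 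The connectedness reduction at the end of your argument (pass to $H^0$ and its preimage $D^0$) does coincide with what the paper does and is fine.
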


\begin{proof}
In a dependent theory definably amenable groups admit bi-f-generic types\footnote{A type $p$ over $M$ is bi-f-generic over $M$ if given  $\phi(x)\in p(x)$, any translate (left or right) of $\phi(x)$ by elements of $G(M)$ does not fork over $M$} (by \cite[Proposition 5.9]{HrPiNIP}  there are f-generic types and by \cite[Lemma 3.6]{montenegroOnshuus} this implies the existence of bi-f-generic types), and the ideal $\mu_G$ of formulas not extendable to bi-f-generic types is an $M$-invariant, $S1$-ideal (\cite[Corollary 3.5]{ChSi} for ideal, \cite[Lemma 3.14]{montenegroOnshuus} for S1), stable under left and right multiplication. Now, by \cite[Proposition 5.44]{Si} any type over a model of a dependent theory satisfies condition $(F)$  of \cite[Theorem 2.19]{montenegroOnshuus}   \footnote{Condition (F) for a type $p$ over a model $M$, as stated in  \cite[Theorem 2.15]{montenegroOnshuus} is weaker than strict non forking of $p$ over $M$ (see  \cite[Definition 5.40, Remark 5.41]{Si}).  By \cite[Example 5.43]{Si} every type over a model $M$ of a dependent theory is strictly non forking over $M$. So condition (F) holds for any type over a model in all dependent theories.}. So all the hypothesis of Theorem 2.19 in \cite{montenegroOnshuus} are satisfied, and we can apply it to get a $\mu_G$-generic type definable subgroup $D$ of $G$ with a finite to one group homomorphism from $D(M)$ to $H(M)$. Since bi-f-generic type-definable groups (in fact, all f-generic type definable subgroups must have bounded index by \cite[Theorem 1.2]{ChSi}), the conclusion holds.

As for connectedness of $H$, if $H$ is not connected,  we can take  the inverse image $D^0$ of the connected component  $H^0$ of $H$ in $D$. As $H^0$ is definable,  $D^0$ is type definable and  $H^0$ has finite index in $H$ which implies that $D^0$ must have finite index in $D$. It follows that $D^0(M)$ has bounded index, and $D^0, H^0$ satisfy the conclusion of the proposition with $H^0$ connected, as required.
\end{proof}

We need the following easy observation:

\begin{remark}
\label{reducing equivalence to connected}\label{reducing equivalence to subgroups}
Let $(H, \oplus)$ and $(H', \otimes)$ be locally equivalent 
definable groups witnessed by $(a,b,a\oplus b)$ and $(a', b', a'\otimes b')$. Let $D$ be a type definable subgroup of $H$ of bounded index. Then there are $a_D$ and $b_D$ in $D$ such that $(a_D,b_D,a_D\oplus b_D)$ and $(a', b', a'\otimes b')$ are equivalent group triples. 
\end{remark}

\begin{proof}
Let $(a,b,a\oplus b)$ and $(a', b', a'\otimes b')$ be as in the statement. Since $D$ has bounded index it must have the same dimension as $G$, so we can find elements $a_1,b_1$ in the same $D$-class as $a,b$ respectively, and such  $a,b$ are algebraically independent of $a_1,b_1$ over all the parameters involved in the definition of $H,H'$ and $D$. 

Then the triple 
\[\left(a_1\oplus a^{-1}, b_1\oplus b^{-1}, \left(\left(a_1\oplus a^{-1}\right)\right)\oplus \left(b_1\oplus a^{-1}\right)\right)\] is equivalent to $(a', b', a'\otimes b')$ over $a_1b_1$, as required.\end{proof}

We are ready to proceed to the proof of Theorem \ref{thmFiniteIndex}.

\begin{proof}[Proof of Theorem \ref{thmFiniteIndex}]

That ACVF is dp-minimal (so, in particular, dependent) is proved in \cite[\S 4]{DoGoLi}, algebraic boundedness follows from quantifier elimination (as discussed following the proof of Proposition \ref{decompositionProp}), and  since $G$ is  abelian, it is amenable (see \cite{wagon}) so in particular definably amenable. Thus, we may apply Proposition \ref{factAlgebraicGrouChunkMontenegro} to obtain an algebraic (algebraically connected) group $(H, \oplus_H)$, a $\CK$-type definable subgroup $D$ of $G$ of bounded index and a $\CK$-definable finite-to-one group homomorphism $\phi:D\to H$. The statement of Theorem \ref{thmFiniteIndex} allows replacing $G$ with a quotient by a finite subgroup, and  as $\ker(\phi)$ is finite we may  identify $G$ with $G/\ker(\phi)$ and assume that $\ker(\phi)$ is trivial (and that $\phi$ is an injection).

Let $(G,\oplus)$, $\alpha,\beta\in G$ and $a,b\in K$ witness the local equivalence of  $G$ with either $\mathbb G_a$ or $\mathbb G_m$.

\begin{lemma}
If $(\alpha,\beta,\alpha \oplus \beta)$ and $(a,b,a+b)$ are equivalent group triples, 
then 
$H$ is $\CK$-definably isomorphic to the additive group.

Similarly, if $(\alpha,\beta,\alpha \oplus \beta)$ and $(a,b,a\cdot b)$ are equivalent group triples, 
then 
$H$ is $\CK$-definably isomorphic to the multiplicative group.
\end{lemma}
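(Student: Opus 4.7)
The plan is to push the equivalent group triple from $G$ forward through $\phi$ into the algebraic group $H$, obtaining a local equivalence between $H$ and $\mathbb{G}_a$ (resp.\ $\mathbb{G}_m$), and then invoke Proposition \ref{equivalent groups} to identify $H$. The lemma is essentially a short piece of bookkeeping on top of these ingredients.

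In more detail: first I would apply Remark \ref{reducing equivalence to subgroups} with $H := G$, $H' := \mathbb{G}_a$, and the type-definable subgroup $D \le G$ of bounded index produced by Proposition \ref{factAlgebraicGrouChunkMontenegro}. This yields $\alpha_D, \beta_D \in D$, independent generic over some parameter set, such that $(\alpha_D, \beta_D, \alpha_D \oplus \beta_D)$ is an equivalent group triple to some $(a', b', a'+b')$ in $\mathbb{G}_a$. So without loss of generality the witnessing elements $\alpha, \beta$ in $G$ can be assumed to lie in $D$, and the domain of $\phi$ covers them.

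Next, since $\phi : D \to H$ is an injective $\CK$-definable group homomorphism, each of $\phi(\alpha)$, $\phi(\beta)$, $\phi(\alpha\oplus\beta) = \phi(\alpha) \oplus_H \phi(\beta)$ is inter-algebraic (via $\phi$ and its inverse) with the corresponding component of $(\alpha, \beta, \alpha \oplus \beta)$. By transitivity of the $\acl$-equalities appearing in the definition of equivalent group triples, and using algebraic boundedness of ACVF to identify $\acl_\CK$ with $\acl_{\mathrm{ACF}}$ on subsets of $K$, the triple $(\phi(\alpha), \phi(\beta), \phi(\alpha)\oplus_H \phi(\beta))$ in $H$ is equivalent to $(a, b, a+b)$ in $\mathbb{G}_a$. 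Thus $H$ and $\mathbb{G}_a$ are locally equivalent as algebraic groups over the algebraically closed field $K$.

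Now I would apply Proposition \ref{equivalent groups}. Connectedness of $H$ was arranged inside Proposition \ref{factAlgebraicGrouChunkMontenegro}, and $\mathbb{G}_a$ is of course connected, so $H^0 = H$ and $\mathbb{G}_a^0 = \mathbb{G}_a$; by part (2) of that proposition, they are either both elliptic curves or isomorphic as algebraic groups. Since $\mathbb{G}_a$ is not an elliptic curve, $H \cong \mathbb{G}_a$ via a definable isomorphism which, both groups being defined over the algebraically closed field $K$, can be taken to be $\CK$-definable. The multiplicative case is formally identical, with $\mathbb{G}_m$ replacing $\mathbb{G}_a$ throughout, since $\mathbb{G}_m$ is likewise not an elliptic curve. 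There is no real obstacle here: the only thing that could fail would be the transitivity of inter-algebraicity required for the pushforward to yield an equivalent triple, and this is immediate once $\phi$ has been arranged (as in the surrounding text) to be an injective definable homomorphism defined on elements of $D$.
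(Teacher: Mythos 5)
Your proof is correct and takes essentially the same route as the paper's: reduce to the type-definable subgroup $D$ via Remark \ref{reducing equivalence to subgroups}, push the equivalent triple forward through the finite-to-one homomorphism $\phi$ (noting this preserves equivalence of group triples), and then apply Proposition \ref{equivalent groups} together with algebraic boundedness and connectedness of $H$ to conclude. The extra observation you make — that $\mathbb{G}_a$ (resp.\ $\mathbb{G}_m$) is not an elliptic curve, so the second alternative in Proposition \ref{equivalent groups}(2) cannot hold — is implicit in the paper and is a useful clarification.
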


\begin{proof}
The proofs are identical, so we prove the additive case.

By Remark \ref{reducing equivalence to subgroups} there are $\alpha_D, \beta_D \in D$ such that $(\alpha_D,\beta_D,\alpha_D\oplus\beta_D)$ is $\CK$-inter algebraic with $(a',b',a'+b')$. Being equivalent group triples is of course preserved under definable finite to one group correspondences, so if $a_H, b_H$ are the images of $\alpha_D, \beta_D$ in $H$, we have that 
$(a',b',a'+b')$ and $(a_H,b_H,a_H+_H b_H)$ are equivalent group triples. 

As $H$ and the additive group are both algebraically connected, Proposition \ref{equivalent groups} gives, using the fact that ACVF is algebraically bounded, that they are isomorphic, as claimed. 
    
\end{proof}

We identify $H$ with its image under the definable isomorphism with either $\mathbb G_a$ or $\mathbb G_m$, so that we have $\phi:D(M)\to H(M)$ an injective group homomorphism such that $H(M)$ is either the additive or the multiplicative group.

First, we deal with the case where $\phi(D)$ is an intersection of definable subgroups of $H$:
\begin{claim} \label{claimIntersectionOfGroups}
    If $\phi(D)$ is an intersection of definable subgroups of $H$, then there are $C\leq G$, a definable subgroup with $D\subseteq C$, and $\phi':C\to H$, an injective definable group homomorphism extending $\phi$.
\end{claim}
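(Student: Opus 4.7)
The strategy is to first extract from $\phi$ a definable extension $\tilde\phi$ defined on a definable superset of $D$ in $G$, and then to exploit the hypothesis on $\phi(D)$ to cut the domain down to a definable subgroup.

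First, since $\phi : D \to H$ is a $\CK$-definable function on the type-definable set $D$, its graph has the form $\Gamma(\phi) = R \cap (D \times H)$ for some $\CK$-definable relation $R \subseteq G \times H$. Set $C_0 := \{x \in G : \exists! y \in H,\ R(x,y)\}$; this is a definable subset of $G$ containing $D$, and $R$ induces a function $\tilde\phi : C_0 \to H$ extending $\phi$. A routine compactness argument, using the type-definability of $D$ together with the fact that $\phi$ is an injective group homomorphism on $D$, then produces a definable set $D_1$ with $D \subseteq D_1 \subseteq C_0$ such that for every $x_1, x_2 \in D_1$ the elements $x_1 \oplus x_2$ and $\ominus x_1$ belong to $C_0$, the identities $\tilde\phi(x_1 \oplus x_2) = \tilde\phi(x_1)\tilde\phi(x_2)$ and $\tilde\phi(\ominus x_1) = \tilde\phi(x_1)^{-1}$ hold (writing $H$ multiplicatively), and $\tilde\phi|_{D_1}$ is injective.

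The crux is then to use the hypothesis $\phi(D) = \bigcap_i H_i$, with each $H_i$ a definable subgroup of $H$, to upgrade the partial homomorphism data on $D_1$ to an actual definable subgroup. For each $i$, the set $E_i := \tilde\phi^{-1}(H_i) \cap D_1$ is definable and contains $D$ (since $\phi(D) \subseteq H_i$); the partial-homomorphism property of $\tilde\phi$ on $D_1$ makes $E_i$ closed under those products of its elements that remain in $D_1$. Since $D \subseteq \bigcap_i E_i$ and $D$ has bounded index in $G$, compactness together with saturation lets one pick finitely many indices $i_1, \dots, i_k$ and iterate the preimage construction (intersecting with further translates to ensure closure under the group law) so as to extract a definable subgroup $C \leq G$ with $D \subseteq C \subseteq D_1$. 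Setting $\phi' := \tilde\phi|_C$ then yields a definable group homomorphism $\phi' : C \to H$ extending $\phi$; injectivity of $\phi'$ follows from the injectivity of $\tilde\phi|_{D_1}$.

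The main obstacle is this last conversion: passing from the type-definable description of $D$ — as the preimage under $\tilde\phi$ of $\bigcap_i H_i$ — to an honest definable subgroup of $G$. The hypothesis that $\phi(D)$ is a plain intersection of definable subgroups of $H$ is essential here, since it furnishes the definable sets $E_i$ whose compactness-based combination yields $C$. Without this hypothesis (that is, in the $o(a)$-case of Fact \ref{fact Subgroups Multiplicative Acosta}) only type-definable approximations to $D$ are available, which is why the alternative case must be treated by a different argument.
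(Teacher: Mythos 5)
Your setup (extend $\phi$ to a definable partial homomorphism on a definable set $D_1 \supseteq D$ by compactness, then pull back definable subgroups of $H$) is the right framework and matches the paper's, but the crucial step — showing that a definable preimage is actually closed under the group operation — is left to the vague phrase ``iterate the preimage construction (intersecting with further translates to ensure closure under the group law),'' which does not amount to an argument.

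The concrete obstruction you acknowledge (``closed under those products of its elements that remain in $D_1$'') is exactly the problem, and the paper resolves it by a specific choice you do not make. One must arrange a \emph{sandwich}: by compactness, extend $\phi$ to an injective partial homomorphism on a definable set $U_0$, pick a symmetric definable $U_1$ with $D \subseteq U_1 \subseteq U_0$ and $U_1 \oplus U_1 \subseteq U_0$, and then — since $\phi(D) = \bigcap_i H_i \subseteq \phi(U_1)$ with $\phi(U_1)$ definable — use saturation to extract a \emph{finite} subintersection $B := H_{i_1} \cap \dots \cap H_{i_k}$ satisfying $\phi(D) \subseteq B \subseteq \phi(U_1)$. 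Setting $C := \phi^{-1}(B) \cap U_1$, closure under products follows because for $a,b \in C$ one has $a \oplus b \in U_0$, hence $\phi(a\oplus b) = \phi(a) +_H \phi(b) \in B \subseteq \phi(U_1)$; so $\phi(a\oplus b) = \phi(c)$ for some $c \in U_1$, and \emph{injectivity of $\phi$ on $U_0$} forces $a\oplus b = c \in U_1$, whence $a \oplus b \in C$. Without the condition $B \subseteq \phi(U_1)$ (which dictates \emph{how} the finite set of indices is chosen) and without invoking injectivity at this exact point, you cannot conclude that the product stays in the definable domain; the ``translates'' idea does not substitute for this. So while the outline is sound, the proposal is missing the one idea that makes the claim true.
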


\begin{claimproof}
    We follow the proof of Proposition 4.7 of \cite{acosta}.
By compactness there are definable sets $U_1$ and $U_0$ with $U_1=U_1^{-1}$, $D\subseteq U_1\subseteq U_0\subseteq G$ and $U_1 \oplus U_1\subseteq U_0$ such that the formula defining $\phi$ also defines a function $\phi:U_0\to  H$ that is injective and respects the group operation.

By compactness and the fact that $\phi(D)$ is an intersection of definable groups, there is some definable group $B\subseteq H$ such that $\phi(D)\subseteq B\subseteq \phi(U_1)$.

Take $$C:=\phi^{-1}(B)\cap U_1.$$
Clearly $C$ is definable, contains $D$ and $\phi$ is defined on $D$ and respects the group operation. So we only have to prove that $C$ is a subgroup of $G$. If $a\in C$ then $a^{-1}\in U_1$ and $\phi(a^{-1})=-\phi(a)\in B$ so $a^{-1}\in C$. Now let $a,b\in C$ so $a\oplus b \in U_0$, moreover $\phi(a\oplus b)=\phi(a)+_H\phi(b)$. As $B$ is a subgroup containing $\phi(a)$ and $\phi(b)$ it follows that $\phi(a\oplus b)\in B$. In particular, as $B\subseteq \phi(U_1)$, there is some $c\in U_1$ such that $\phi(a\oplus b)=\phi(c)$. As the restriction of $\phi$ to $U_1$ is injective, it implies that $a\oplus  b= c$ so $a\oplus b\in C$ and we conclude. \end{claimproof}

Thus, if $\phi(D)$ is a small intersection of subgroups and $C$ is as provided by Claim \ref{claimIntersectionOfGroups}, $C$ must also have bounded index and since it is definable, it must have finite index in $G$. $C$ is definably isomorphic to the definable subgroup $\phi(C)$ of $H$ so that the theorem holds.\\

So we are only missing the case where $\phi(D)$ is not the intersection of definable groups. By Fact \ref{fact Subgroups Multiplicative Acosta} this implies that $H$ is the multiplicative group and $\phi(D)$ is a small intersection $\bigcap\limits_{i\in I} o(a_i)$ where we remind that 
\[
o(a_i)=\bigcap_{n\in \mathbb N} \{x : |nv(x)|<v(a_i)\}
\]
for some $a_i$ of positive valuation.

 Let $\psi$ be the inverse of $\phi$, and for any rational $q$ let $X_q(a_i)=\{x : |v(x)|<qv(a_i)\}$. By compactness, there are  $N\in \mathbb N$,  some $i$ such that $X_{1/N}(a_i)\supseteq \phi(D)$ and a map extending $\psi$ from $X_{1/N}(a_i)$ into $G$ such that $\psi$ is injective and such that it is a \emph{local group homomorphism in $X_{1/N}(a_i)$}: whenever $x,y,x\cdot y\in X_{1/N}(a_i)$ we have $\psi(x\cdot y)=\psi(x)\oplus \psi(y)$.

Denote $a:=a_i$ and $X:=X_{1/N}(a)$.

Notice that, since we are working in the multiplicative group, for any $\delta$ we have that $\left(X_{\delta}(a)\right)\cdot \left(X_{\delta}(a)\right)= X_{2\delta}(a)$. We will use this to inductively prove that for $k\in \mathbb N$, the natural extension of $\psi$ to $X_{2^k/N}(a)$
defined as  $\psi(x\cdot y)=\psi(x)\oplus \psi(y)$ is a well-defined local group homomorphism in $X_{2^k/N}(a)$:

\begin{claim}
    Assume that we have a local group homomorphism $\psi$ from $X_\delta(a)$ to $G$. Then the map
    from $X_{2\delta}(a)$ defined as $\psi(x\cdot y):=\psi(x)\oplus \psi(y)$ is  well-defined local group homomorphism in $X_{2\delta}(a)$.
\end{claim}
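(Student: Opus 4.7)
The claim splits into two subclaims: (a) \emph{well-definedness}: for $z \in X_{2\delta}(a)$ and any two factorizations $z = x_1 y_1 = x_2 y_2$ with $x_i, y_i \in X_\delta(a)$, the element $\psi(x_1) \oplus \psi(y_1)$ equals $\psi(x_2) \oplus \psi(y_2)$; and (b) the \emph{local homomorphism property} on $X_{2\delta}(a)$, i.e., $\tilde\psi(z_1 z_2) = \tilde\psi(z_1) \oplus \tilde\psi(z_2)$ whenever $z_1, z_2, z_1 z_2 \in X_{2\delta}(a)$.

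For (a), introduce $s := x_1 x_2^{-1} = y_2 y_1^{-1}$. From $|v(x_i)|, |v(y_i)| < \delta v(a)$ one only gets $|v(s)| < 2\delta v(a)$, so $s \in X_{2\delta}(a)$ but possibly $s \notin X_\delta(a)$. The strategy is to factor $s = s_1 s_2$ with $s_1, s_2 \in X_\delta(a)$ and, moreover, with the auxiliary products $x_2 s_1$ and $s_2 y_1$ also in $X_\delta(a)$. Granted this, three applications of the local homomorphism hypothesis along $x_1 = x_2 \cdot s_1 \cdot s_2$ and $y_2 = s_1 \cdot s_2 \cdot y_1$ yield $\psi(x_1) = \psi(x_2) \oplus \psi(s_1) \oplus \psi(s_2)$ and $\psi(y_2) = \psi(s_1) \oplus \psi(s_2) \oplus \psi(y_1)$, and commutativity of $G$ gives the required equality.

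The existence of such $s_1$ is the main technical point. Translating the four required conditions to the value group $\Gamma$, the constraints amount to forcing $v(s_1)$ to lie simultaneously in four open intervals of common length $2\delta v(a)$, centered at $0$, $-v(x_2)$, $v(y_2)$, and $v(s)$ respectively. A direct check shows that every pairwise distance between these centers reduces to one of $|v(x_i)|$, $|v(y_i)|$, or $|v(z)|$, and is therefore strictly less than $2\delta v(a)$; so the four intervals pairwise intersect. By the one-dimensional Helly theorem their total intersection is nonempty, and because $\CK\models\mathrm{ACVF}$ has divisible value group and $K$ is algebraically closed, an actual $s_1 \in K^*$ with valuation in this intersection exists; then $s_2 := s/s_1$ lies in $X_\delta(a)$ by construction.

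For (b), use divisibility of $\Gamma$ once more to choose decompositions $z_i = x_i y_i$ with $v(x_i) = v(y_i) = v(z_i)/2$, so that $x_i, y_i \in X_\delta(a)$ automatically, and hence $\tilde\psi(z_i) = \psi(x_i) \oplus \psi(y_i)$. Setting $u := x_1 x_2$ and $w := y_1 y_2$, one has $v(u) = v(w) = v(z_1 z_2)/2$, so $u, w \in X_\delta(a)$ and $uw = z_1 z_2$. Invoking (a), $\tilde\psi(z_1 z_2) = \psi(u) \oplus \psi(w)$; expanding by the local homomorphism of $\psi$ on $X_\delta(a)$ and using commutativity of $G$ rearranges this to $\psi(x_1) \oplus \psi(y_1) \oplus \psi(x_2) \oplus \psi(y_2) = \tilde\psi(z_1) \oplus \tilde\psi(z_2)$. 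The principal obstacle is the Helly-style existence argument in step (a), where divisibility of the value group is used in an essential way; everything else is routine bookkeeping from the hypothesis.
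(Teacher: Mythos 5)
Your proof is correct, but for the well-definedness step you take a genuinely different route from the paper. The paper's argument is a one-line square-root trick: given $xy = zw$ with all four in $X_\delta(a)$, one picks square roots $x_1,y_1,z_1,w_1$ (adjusting a sign so that $x_1y_1 = z_1w_1$); since $|v(x_1)| = |v(x)|/2 < \tfrac{1}{2}\delta v(a)$, every auxiliary product $x_1x_1$, $x_1y_1$, etc.\ automatically lands in $X_\delta(a)$, and four applications of the local-homomorphism hypothesis plus commutativity finish it off. Your version instead introduces the ``twist'' $s = x_1 x_2^{-1} = y_2 y_1^{-1}$, which a priori lives only in $X_{2\delta}(a)$, and then splits $s = s_1 s_2$ so that $s_1$, $s_2$, $x_2 s_1$, $s_2 y_1$ all lie in $X_\delta(a)$ --- this is where the 1-D Helly argument on $\Gamma$ comes in. The bookkeeping is sound: the four target intervals have common length $2\delta v(a)$ and centers $0$, $-v(x_2)$, $v(y_2)$, $v(s)$, and although one of the six pairwise gaps is $|v(s)| = |v(x_1)-v(x_2)|$ (not literally one of $|v(x_i)|$, $|v(y_i)|$, $|v(z)|$ as you claim), it is still strictly below $2\delta v(a)$, so the common intersection is a nonempty open interval of $\Gamma$, and divisibility of $\Gamma$ together with surjectivity of $v$ produces the needed $s_1$. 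Your argument for the local-homomorphism property on $X_{2\delta}(a)$ is essentially the same as the paper's: you choose a balanced factorization $z_i = x_iy_i$ with $v(x_i)=v(y_i)=v(z_i)/2$, which is exactly what a square root provides. Net effect: your proof is correct and relies on the same structural features of ACVF (divisible value group, equivalently square roots in $K^*$), but the Helly-type splitting is considerably heavier machinery than the paper's uniform square-root device, which handles both well-definedness and the homomorphism property with the same move.
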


\begin{claimproof}
We first prove that the map is well-defined. Suppose that $x\cdot y=z\cdot w$ for $x,y,z,w\in X_\delta(a)$.

Choose $x_1,y_1,z_1,w_1$ to be square roots of $x,y,z,w$ respectively, such that $x_1y_1=z_1w_1$. By hypothesis $x,y,z,w\in X_\delta(a)$ so that by hypothesis and construction that $\psi(x)=\psi(x_1\cdot x_1)=\psi(x_1)\oplus \psi(x_1)$, and similarly $\psi(y)=\psi(y_1)\oplus \psi(y_1)$, $\psi(z)=\psi(z_1)\oplus \psi(z_1)$, and $\psi(w)=\psi(w_1)\oplus \psi(w_1)$. On the other hand, by construction $x_1\cdot y_1$ and $w_1\cdot z_1$ are also in $X_\delta(a)$ so that again by hypothesis $\psi(x_1\cdot y_1)=\psi(x_1)\oplus \psi(y_1)$ and $\psi(z_1\cdot w_1)=\psi(z_1)\oplus \psi(z_1)$. Combining all this and using that $G$ is abelian, we get that:

$\psi(x)\oplus \psi (y)=\psi(x_1)\oplus\psi(x_1)\oplus\psi(y_1)\oplus\psi(y_1)=\psi(x_1y_1)\oplus \psi(x_1y_1)=\psi(z_1w_1)\oplus\psi(z_1w_1)=\psi(z_1)\oplus \psi(z_1)\oplus \psi(w_1)\oplus\psi(w_1)=\psi(z)\oplus\psi(w)$ as required.

\medskip

Assume now that $x\cdot y=z$ and that all elements are in $X_{2\delta}(a)$. As before, let $x_1,y_1$ be square roots of $x$ and $y$ respectively. We get that  $x_1,y_1, x_1\cdot y_1$ are all elements of $X_\delta(a)$, and that by definition (and what we have just shown) $\psi(x)=(\psi(x_1))\oplus (\psi(x_1))$, $\psi(y)=(\psi(y_1))\oplus (\psi(y_1))$ and $\psi(z)=(\psi(x_1\cdot y_1))\oplus (\psi(x_1\cdot y_1))$. Commutativity and the assumption that $\psi$ is a local homomorphism on $X_\delta(a)$ proves that $\psi(x)\oplus \psi(y)=\psi(z)$, as required.
\end{claimproof}

Recall that $D$ has bounded index in $G$, which implies that finitely many translates of $\psi(X)$ cover $G$. This implies that for some $k$, the set
\[\underbrace{\psi\left(X\right)\oplus \dots \oplus \psi\left(X\right)}_{k\text{ times}}\] is already the subgroup $G'$ of $G$ generated by $\psi\left(X\right)$.

This implies that  \[\psi(X_{k/N}\left(a\right))=\psi\left(X_{(k+1)/N}\left(a\right)\right),\] so there must be some element in $X_{(k+1)/N}(a)$ different from 1 that is in the kernel of $\psi$. Because the restriction of $\psi$ to $X_{1/N}(a)$ is injective, the valuation of any such element must be grater than or equal to $v(a)/N$. Let $l$ be the smallest integer such that $X_{l/N}(a)$ contains an element in the kernel of $\psi$ different from 1.

\begin{claim}\label{claimUniqueInKernel}
There is a unique element in $X_{l/N}(a)$ different from the identity in the kernel of $\psi$.
\end{claim}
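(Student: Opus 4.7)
The plan is to argue by contradiction from the minimality of $l$. Suppose $x,y\in X_{l/N}(a)\setminus\{1\}$ are two distinct elements of $\ker(\psi)$. Minimality of $l$ forces any nontrivial kernel element $z\in X_{l/N}(a)$ to satisfy $(l-1)v(a)/N\le |v(z)|<lv(a)/N$, since otherwise $z$ would already witness a nontrivial kernel element in $X_{(l-1)/N}(a)$. Moreover $l\ge 2$, since $\psi$ is injective on $X:=X_{1/N}(a)$ and therefore has no nontrivial kernel element there; in particular $v(x),v(y)\ne 0$.

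I would then carry out a case split on the signs of $v(x)$ and $v(y)$, noting that both $X_{l/N}(a)$ and $\ker(\psi)$ are closed under inversion. If $v(x)$ and $v(y)$ have the same sign, the previous bounds give $|v(xy^{-1})|=|v(x)-v(y)|<v(a)/N$, so $xy^{-1}\in X$. Since $\psi$ is a local homomorphism on the extended domain constructed in the preceding claim, $\psi(xy^{-1})=\psi(x)-\psi(y)=0$, and injectivity of $\psi$ on $X$ yields $xy^{-1}=1$, contradicting $x\ne y$. If instead $v(x)$ and $v(y)$ have opposite signs, the symmetric estimate gives $|v(xy)|<v(a)/N$, whence $xy\in X$ and $\psi(xy)=0$; injectivity of $\psi$ on $X$ then forces $xy=1$, i.e., $y=x^{-1}$.

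The main obstacle is the opposite-sign subcase, which does not immediately contradict $x\ne y$ but only yields $y=x^{-1}$. To close it off, I would observe that $x=x^{-1}$ forces $x^2=1$, whence $v(x)=0$ and $x\in X$; injectivity of $\psi$ on $X$ would then give $x=1$, contradicting $x\ne 1$. Hence $x\ne x^{-1}$, and the pair $\{x,x^{-1}\}$ represents the only remaining obstruction to literal uniqueness. Ruling out that both $x$ and $x^{-1}$ simultaneously lie in $\ker(\psi)\cap X_{l/N}(a)$ is the delicate point of the claim; I expect the cleanest route is to exploit the freedom in the choice of $a\in\{a_i\}_{i\in I}$ and of $N$ from the preceding compactness argument so that $a$ itself serves as a canonical generator of the (cyclic) kernel inside $X_{l/N}(a)$, thereby absorbing the inverse into the same ``element'' and setting up the subsequent identification of the quotient with $\mathbb{G}_{m,x}$.
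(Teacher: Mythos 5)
Your case analysis is the correct and more careful version of the argument the paper gives. The paper's displayed chain $\frac{l-1}{N}v(a)\leq v(y)\leq v(x)<\frac{l}{N}v(a)$ tacitly assumes $v(x),v(y)\geq 0$; you have worked through the sign patterns and found, as you must, that the opposite-sign case yields $y=x^{-1}$ rather than a contradiction. So your diagnosis -- that the ultrametric estimate only proves uniqueness up to inversion -- is accurate, and it exposes a small imprecision in the paper's statement and proof.

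Where you go astray is the last sentence. You cannot ``rule out that both $x$ and $x^{-1}$ simultaneously lie in $\ker(\psi)\cap X_{l/N}(a)$'': they always do. Since $x\cdot x^{-1}=1$ with all three elements in the domain of the local homomorphism $\psi$ and $\psi(1)=e_G$, one has $\psi(x^{-1})=\psi(x)^{-1}$, so $x\in\ker(\psi)$ forces $x^{-1}\in\ker(\psi)$; and $|v(x^{-1})|=|v(x)|$, so $x^{-1}\in X_{l/N}(a)$ as well. No reshuffling of the $a_i$ or of $N$ changes this, and there is no reason to expect $a$ itself to lie in the kernel, so your proposed repair does not go anywhere. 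The correct reading of the claim is simply that there is a unique nontrivial kernel element $b$ in $X_{l/N}(a)$ with $v(b)>0$ (equivalently, uniqueness up to inversion): your same-sign computation shows any two such are equal, and after replacing an element of negative valuation by its inverse the opposite-sign case disappears. This weaker statement is exactly what the following paragraph uses, since $\mathbb G_{m,b}$ is only defined for $v(b)>0$, so you should replace the speculative final sentence with this observation.
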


\begin{claimproof}
This is a standard lattice argument, but we include it for completeness. We know that injectivity of $\psi$ in $X_{1/N}(a)$ implies $l>1$. 

Let $x,y\in  X_{l/N}(a)\cap \ker(\psi)$ with $x,y\neq 1$ so that $x,y\not \in  X_{{l-1}/N}(a)$ by minimality of $l$. Assume $v(x)\geq v(y)$, so that 
\[\frac{l-1}{N}v(a)\leq v(y)\leq v(x)< \frac{l}{N}v(a),\]
so \[v(xy^{-1})=v(x)-v(y)< \frac{l}{N}v(a)-\frac{l-1}{N}v(a)=\frac{1}{N}v(a).\]

It follows that $x\cdot y^{-1}\in X_{1/N}(a)\cap \ker(\psi)$ which implies $x=y$ by injectivity of $\psi$ in $X_{1/N}(a)$. 
\end{claimproof}

If $b\in \ker(\psi)\cap X_{l/N}(a)$ is as provided by Claim \ref{claimUniqueInKernel}, then the universe $\{x\mid 0\leq v(x)\leq v(b)\}$ of $G_{m,b}$ is contained in $X_{l/N}(a)$. The reader can check that the restriction of the map $\psi$ to $G_{m,b}$ is a group isomorphism from $G_{m,b}$ to $G'$.  
\end{proof}

\section{Basic intersection theory in ACVF}\label{BITIA}
		
    Our goal in this section is to develop the tools allowing us to detect, definably in the reduct, tangency of definable curves. The standard ideology is that if two curves are tangent at a point $x_0$ then a small perturbation cannot lose any intersection points and that, in fact, near $x_0$ some intersection points must be gained. Thus, tangent curves should have fewer than the generic number of intersection points (not counting multiplicities), a property definably detectable in the relic.  Below we  formulate this idea precisely (and prove it) for suitably chosen families of curves. 
		
	To keep the notation cleaner we develop the theory in $K^2$, but as explained in Section \ref{ss: the setting}, everything transfers smoothly to $G^2$ via the simple definable atlas introduced there.

	Since ACVF$^{eq}$ has uniform finiteness (\cite[Theorem 3.1]{JohCminimalexist}) if $(G,\oplus)$ is the underlying group of $\CG$, there is a $\0$-definable family of infinite commutative groups, of which $G$ is a generic member. Moreover, by Theorem \ref{thmFiniteIndex} we may require that either all groups in the family have a  subgroup of index $n$ (some fixed $n$) definably isomorphic (via a specific isomorphism) to a definable subgroup of $(K,+)$, or all are isomorphic to  a subgroup of $(K^*, \cdot)$ or all groups in the family are definably isomorphic (via a specific isomorphism) to  a group $\mathbb G_{m,a}$ for some $a$ of positive valuation. In particular, if $\{G_s\}_{s\in S}$ is such a definable family of curves then for all $s\in S(\Kk)$ the groups $G_s(\Kk)$ in the family are analytic (since we have not properly defined this notion, let us remark that for our needs it suffices that $G_s$-translations are analytic). For that reason, as long as we are proving elementary statements, it will suffice to prove them for definable analytic groups $G$, whose universe can be identified with a subset of $K$: 
		
		\textbf{From this point on, we assume that all groups considered are topological  in the affine (i.e., induced) topology, and if definable over $\Kk$ we assume further they are  analytic.}
		
		When working in a complete model, the following is a key definition: 
		\begin{definition}
			Let $\Kk$ be a complete model. A definable curve $C$ in $\mathbb K^2$ is \emph{$t$-analytic at $c\in C$} if there is a definable $t$-analytic function $f_c$ whose graph is contained in $C$ and contains $c$. The curve $C$ is \emph{locally $t$-analytic }if it is $t$-analytic at every point. 
		\end{definition}
	
		Lemma \ref{L: generically locally analytic}  shows that every definable curve is $t$-analytic at all but finitely many points. By the same results, a curve is $t$-analytic at $x$ but (possibly) not analytic at $x$ only if the function $f$ witnessing $t$-analyticity at $x$ is of the form $\Fr^n \circ g$ for some analytic $g$ and $n<0$.  Note also that, in the notation of the above definition, we do not require that $C$ coincides with the graph of $f_c$ on any neighbourhood. In particular, a curve may be $t$-analytic at a branch point.

        In the applications, we need a first order variant of local $t$-analyticity. For technical reasons, it is useful to introduce a slightly stronger variant: 
        \begin{definition}\label{D: strongly t-an}
            A definable plane curve $C$ is strongly $t$-analytic if for every point $(x_0,y_0)\in C$ the following hold: 
            \begin{enumerate}
                \item There exists a unique irreducible component  $Z\ni (x_0,y_0)$ of the Zariski closure of $C$. 
                \item If $Z$ is the zero set of an irreducible polynomial $H(x,y^{p^n})$ and $n$ is maximal such, then $\frac{\partial H}{\partial y}(x_0,y_0^{p^n})\neq 0$ (and analogously if $Z$ is the zero set of $H(x^{p^n},y)$). 
                \item Under the assumptions of (2) above, there exists a definable function $f: B\to K$ for some ball $B\ni x_0$ such that the graph of $\Fr^{-n}\circ f$ is contained in $C$ near $(x_0,y_0)$ and $f$ is smooth with a smooth local inverse at $y_0^{p^n}$ (and analogously if $Z$ is the zero set of $H(x^{p^n},y)$). 
            \end{enumerate}
            A point $c\in C$ is $t$-smooth if conditions (2) and (3) above hold in $c$. 
        \end{definition}

        It is easy to see that, in a complete model, strong $t$-analyticity implies local $t$-analyticity. Indeed, if the zero-set of $H(x,y^{p^n})$ is the irreducible component of the Zariski closure of $C$ containing $(x_0,y_0)$ and $n$ is maximal possible such, then strong $t$-analyticity implies that $\frac{\partial H}{\partial y}(x_0,y_0^{p^n})\neq 0$, so by the Implicit Function Theorem the zero-set of $H$ coincides, near $(x_0,y_0^{p^n})$, with the graph of a (definable) analytic function $f$ so that $\Fr^{-n}\circ f$ coincides with $C$ near $(x_0,y_0)$. In particular, $C$ is $t$-analytic at $(x_0,y_0)$. The same argument shows that a curve is $t$-analytic at $t$-smooth points. 
        
        We also point out that strong $t$-analyticity is definable, uniformly, in definable families. For this, it suffices to note that the Zariski closure of any definable set $C$ is definable over any set of parameters over which $C$ is defined. Similarly, the statement ``$(x_0,y_0)$ belongs to a unique irreducible component of the Zariski closure of $C$'' is defined over the same set of parameters as $C$.  By the same argument we also get that to any curve $C$ there is a finite set $S$, defined over the same parameters as $C$, such that $C\setminus S$ is strongly $t$-analytic. 
	
		It will be convenient to introduce an \emph{ad hoc} definition: 
		\begin{definition}\label{D: Cgen}
			Let $(G,\oplus)$ be a definable 1-dimensional group and $C\sub G^2$ a definable  curve. 
			\begin{enumerate}
				\item A definable family of plane curves $\CY$ is \emph{ 1-$C$-generated} if it is of one of the following forms: $\{t_a(C): a\in C\}$, $\{t_a(C): a\in G^2\}$, $\{C\ominus t_a(C): a\in C\}$ or $\{C\ominus t_a(C): a\in G^2\}$. A curve is \emph{1-$C$-generated} if it belongs to a 1-$C$-generated family.

    \item A family of curves is \emph{$n$-$C$-generated} if it is either 1-$D$-generated for some $(n-1)$-generated curve $D$, or obtained as either the composition or the functional (group) sum of two $(n-1)$-$C$-generated families. As before, a curve is \emph{$n$-$C$-generated} if it belongs to a $n$-$C$-generated family.
			\end{enumerate} 
			A family of plane curves is $C$-generated if it is $n$-$C$-generated for some $n$, and a curve is $C$-generated if it is $n$-$C$-generated for some $n$. 
		\end{definition}

        We will see below that if $C$ is strongly $t$-analytic then, in a complete model, any curve in a $C$-generated family is locally $t$-analytic. Since, as explained above, to any definable curve we can associate, uniformly, a strongly $t$-analytic curve, any elementary statement we can prove on families of $C$-generated curves for a strongly $t$-analytic curve $C$ in a complete model transfer to any model of ACVF. The advantage is that to prove such statements in a complete model, we can use properties of locally $t$-analytic curves. Another useful feature is that given a relic-definable curve $C$, any $C$-generated family is definable in the relic.  

        So we start with: 
        \begin{lemma}\label{L: uniform t-an}
            Let $C\sub \Kk$ be a strongly $t$-analytic curve definable in a complete model,  $\{Y_s\}_{s\in S}$ a $C$-generated family of plane curves. Then  $Y_s$ is locally $t$-analytic for all $s\in S$. 
        \end{lemma}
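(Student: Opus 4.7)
My plan is to prove the statement by induction on the smallest $n$ for which $Y_s$ belongs to an $n$-$C$-generated family. The core of the argument is that the class of $t$-analytic functions is closed under all the operations involved in the generation -- translation, group difference, functional sum, and composition -- and that at each point of a generated curve, we can explicitly produce a $t$-analytic function through that point by combining such functions coming from the pieces it was built from. Since $C$ is strongly $t$-analytic, by the discussion immediately following Definition \ref{D: strongly t-an}, $C$ is itself locally $t$-analytic: at every $c \in C$ the Implicit Function Theorem applied to the unique irreducible component of the Zariski closure of $C$ through $c$ produces a definable $t$-analytic function with graph in $C$ passing through $c$.

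For the base case $n=1$, suppose first $Y_s = t_a(C)$ for some $a=(a_1,a_2)\in G^2$. Given $c=(x_0,y_0) \in Y_s$, apply local $t$-analyticity of $C$ at the point $(x_0\oplus a_1, y_0\oplus a_2)$ to get a $t$-analytic $f$ with graph in $C$ through that point; then $g(x) := f(x\oplus a_1) \ominus a_2$ is $t$-analytic (because in our setting the $G$-translations, hence also the map $y\mapsto y\ominus a_2$, are analytic, and composition of a $t$-analytic function with an analytic one on either side is $t$-analytic), has graph in $t_a(C)$, and passes through $c$. For $Y_s = C \ominus t_a(C)$, if $c = (x_0, z_0) \in Y_s$, pick $y_1,y_2$ with $z_0 = y_1 \ominus y_2$, $(x_0,y_1)\in C$, $(x_0,y_2)\in t_a(C)$, take local $t$-analytic $f_1,f_2$ through these points, and set $g(x) := f_1(x) \ominus f_2(x)$; this is again $t$-analytic since $\ominus$ on $G$ is analytic.

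The inductive step has two sub-cases. If $Y_s$ lies in a family 1-$D$-generated for some $(n{-}1)$-$C$-generated $D$, the inductive hypothesis gives local $t$-analyticity of $D$ and the argument above applies verbatim with $D$ replacing $C$. If $Y_s = D_r \circ E_t$ or $Y_s = D_r \oplus E_t$ for $(n{-}1)$-$C$-generated families, then at any point of $Y_s$ we produce, using the inductive hypothesis for $D_r$ and $E_t$, local $t$-analytic functions which we compose or add: the resulting function is again $t$-analytic. The key algebraic fact is that if $f_i = \Fr^{-n_i}\circ g_i$ with $g_i$ analytic ($i=1,2$), then in characteristic $p$ one has the identity $\Fr^m \circ g \circ \Fr^{-m} = \tilde g$ for some analytic $\tilde g$ (obtained by raising the coefficients of $g$ to the $p^m$-th power, which is legitimate termwise since $(\sum b_k)^{p^m} = \sum b_k^{p^m}$ holds for convergent series in characteristic $p$). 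Applying this,
\[
f_2 \circ f_1 \;=\; \Fr^{-n_2}\circ g_2 \circ \Fr^{-n_1}\circ g_1 \;=\; \Fr^{-(n_1+n_2)} \circ (\tilde g_2 \circ g_1),
\]
and $\tilde g_2 \circ g_1$ is analytic as a composition of analytic functions; similarly for $f_1 \ominus f_2$ and $f_1 \oplus f_2$, using that the group operation on $G$ is analytic and that Frobenius is a group homomorphism in characteristic $p$ (so it distributes through $\oplus$ and $\ominus$ after lifting both functions to a common Frobenius-power).

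The only real subtlety is this characteristic-$p$ bookkeeping: in characteristic $0$ the Frobenius twists are trivial and $t$-analyticity is genuine local analyticity, so all the closure properties are immediate; in characteristic $p>0$ one must verify carefully that conjugation by $\Fr^n$ sends analytic functions to analytic functions and that the group operation on $G$ intertwines with Frobenius. This is ensured by our standing assumption that $G$ is (locally) identified with one of $(K,+)$, $(K^*,\cdot)$, or $\mathbb G_{m,a}$, in all of which Frobenius is a group endomorphism, so the interchange goes through and the induction closes.
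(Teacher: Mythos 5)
Your proof is correct and takes essentially the same approach as the paper: induction on generation length, using closure of $t$-analyticity under translation, group sum/difference, and composition. Two remarks are worth making. First, you make explicit the Frobenius-conjugation identity $\Fr^m\circ g\circ \Fr^{-m}=\tilde g$, where $\tilde g$ is analytic with coefficients raised to the $p^m$-th power; this is precisely what underlies the paper's unproved assertion that ``the composition of $t$-analytic functions is $t$-analytic,'' so spelling it out is a genuine improvement in clarity. Second, and conversely, you omit the domain-compatibility point: for the composition $f_2\circ f_1$ of locally defined functions with domains $B_1, B_2$ to make sense (and yield an analytic function after twisting), one must first shrink $B_1$ so that $f_1(B_1)\subseteq B_2$; the paper arranges this by invoking Benedetto's lemma that analytic maps (and Frobenius) carry discs to discs. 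This is routine but should be stated, since it is the reason the resulting composition is a bona fide power series on a ball rather than merely a pointwise composite.
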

        \begin{proof}
            By induction on the generation of $\{Y_s\}_{s\in S}$. As already pointed out above, for $C$ itself this is clear. Our assumption that $G$ is analytic (in complete models) implies that if $D_1, D_2$ are locally $t$-analytic curves then so are  $t_a(D_i)$ and the (functional) sum $D_1\oplus D_2$ and difference $D_1\ominus D_2$. Since the composition of $t$-analytic functions is $t$-analytic, it will suffice to show that the composition of curves $D_1,D_2$ at $t$-analytic points $(a,b)\in D_1$, $(b,c)\in D_2$  contains the graph of the composition of the functions $f_1, f_2$ witnessing local $t$-analyticity of $D_1$, $D_2$ at $(a,b)$, $(b,c)$ respectively.  To that end let $f_i: B_i\to \Kk$ witness $t$-analyticity as above. By \cite[Lemma 2.3]{Bendetto} analytic functions map discs to discs, and since this is true also of $\Fr$ and $\Fr^{-1}$ it follows that $f_1$ maps $B_1$ to a disc containing $b$. Reducing $B_1$, if needed, we may assume that $f(B_1)\sub B_2$, and the conclusion follows. 
        \end{proof}

		Clearly, given a curve $C$, any $C$-generated family of plane curves is also definable (and if $C$ is relic definable then the so is any $C$-generated family). In order to formulate first order statements concerning $C$-generated families of curves, we need to work uniformly in generated families, as $C$ ranges over a definable family of plane curves. To do so, it is useful to introduce:  
		\begin{definition}
			A generation scheme is a formula $\phi_C(x,y; s):=\exists x',y'  ((x',y')\in C\land \psi(x',y',x, y, s))$ such that for any plane curve $C$ the formula $\phi_C(x,y,s)$ defines a $C$-generated family of plane curves indexed by $S:=\{s: (\exists^\infty x,y) \phi_C(x,y,s)\}$. 
		\end{definition}

        So a statement ``given a definable curve $C$ and a $C$-generated family...'' translates into a series of first order statements by taking $C$ to be a (generic) member of a $\emptyset$-definable family of curves $\mathcal C$ and, for every generation scheme $\phi_C(x,y,z)$,  we have the statement ``For any $D\in \mathcal C$ the family of plane curves $\phi_D(x,y,z)$...''. To keep the exposition cleaner, we do not dwell further on this point.   
				
		We now start developing the intersection theory needed to carry out the strategy described above. We need some preliminaries: \\
		
		In a complete model $\Kk$, we usually identify $\Gamma$ with an additive subgroup of  $(\RR, +)$. It is sometimes convenient to replace this with a multiplicative subgroup by setting $|x|:=e^{-v(x)}$ for $x\in K^*$ and $|0|:=0$ (note that this is also order reversing). To conform with the notation in the relevant literature, and in order to better capture the analogy with classical analytic geometry, in the present section we use this notation, working with absolute values instead of valuations, and identifying $\Gamma$ with a subgroup of $(\RR^{>0}, \cdot)$. 
		
		\begin{fact}[Theorem 2.11  \cite{Cher2}]\label{F: Cherry}
			Let $\Kk$ be a complete algebraically closed  valued field, $f=\sum\limits_{n=1}^\infty   c_nz ^n$  an analytic function converging in $B_R(0)$. For $r<R$ let $|f|_r:=\sup_n \{|c_n|r^n\}$ and \[K(f,r):=\max\{n\in \mathbb N: |c_n|r^n=|f|_r\}.\] 
			Then the number of zeros of $f$ in $B_r(0)$, counting multiplicities, is $K(f,r)$. 
		\end{fact}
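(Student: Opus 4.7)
The plan is to deduce the result from the non-Archimedean Weierstrass Preparation Theorem, using the Newton polygon to read off $K(f,r)$ geometrically.

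First I would recall the Newton polygon $N(f)$ of $f$: the lower convex hull in $\RR^2$ of the set $\{(n, v(c_n)) : c_n\neq 0\}$. The quantity $|f|_r = \sup_n |c_n|r^n$ corresponds, after taking $-\log$, to the infimum of the linear function $(n,y)\mapsto y - n\log r$ evaluated on $N(f)$. Consequently, $K(f,r)$ is precisely the $x$-coordinate of the rightmost vertex of $N(f)$ belonging to the face of $N(f)$ normal to the direction of slope $-\log r$. Equivalently, $K(f,r)$ equals the total horizontal length of the segments of $N(f)$ whose slope is at least $-\log r$ (with $v$ additive on $(0,\infty)$; translating back to absolute values, these are the slopes corresponding to roots with $|z|\le r$).

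Next I would invoke Weierstrass Preparation for the Tate-style algebra of analytic functions on $\bar B_r(0)$: since $f$ is analytic on $B_R(0)\supsetneq \bar B_r(0)$ and the index $K(f,r)$ is finite, one can factor
\[
f(z) = P(z)\cdot u(z),
\]
where $P(z)$ is a monic polynomial of degree $K(f,r)$ whose roots all lie in $\bar B_r(0)$, and $u$ is an analytic function on $\bar B_r(0)$ with $|u|_r = |u(z)|$ for all $z$ in that disc, so in particular $u$ has no zeros in $\bar B_r(0)$. The existence of this factorization is the core analytic input and can be proved by the usual iterative division-with-remainder construction in the algebra of restricted power series, using the fact that multiplicative valuations on such algebras are sup norms.

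Granted the factorization, the zeros of $f$ in $B_r(0)$ (counting multiplicity) coincide exactly with those of $P$, and $\deg P = K(f,r)$, finishing the proof. The main obstacle is the Weierstrass Preparation step: one must argue carefully in the non-Archimedean setting that when $|c_{K(f,r)}|r^{K(f,r)} = |f|_r$ strictly dominates $|c_n|r^n$ for $n > K(f,r)$ (which one can arrange by shrinking $r$ slightly within $\Gamma\otimes\QQ$ if needed, then passing to the limit), division by a putative factor $P$ converges in the sup norm. The Newton polygon description of $K(f,r)$ then glues the algebraic count to the analytic count, and a limiting argument handles the case where $K(f,r)$ is not attained at a vertex.
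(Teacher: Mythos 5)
The paper does not prove this statement; it is recorded as a Fact and cited verbatim as Theorem~2.11 of Cherry, so there is no in-paper argument to compare against. Your Weierstrass--Preparation route is a valid and fairly standard way to establish it. A few remarks on precision.

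First, the statement is really a count of zeros in the \emph{closed} ball of valuative radius $r$: the Weierstrass factorization $f=Pu$ produces a monic $P$ of degree $K(f,r)$ with \emph{all} $K(f,r)$ roots in $\bar B_r(0)$ (some possibly on the bounding sphere $|z|=r$) and $u$ a unit there. If $B_r(0)$ were read as the open ball, roots on the sphere would be missed and the count would instead be $\min\{n:|c_n|r^n=|f|_r\}$; in your Newton-polygon language, the closed-ball count is the \emph{right} endpoint of the face of slope $\log r$ and the open-ball count is the left one. (Also note a small sign slip: with the lower convex hull of $\{(n,v(c_n))\}$, the roots $z$ with $|z|\le r$ correspond to edges of slope $\le\log r$, not $\ge -\log r$.)

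Second, the hedging at the end is unnecessary. Because $r<R$ and the series converges on $B_R(0)$, we automatically have $|c_n|r^n\to 0$; so $K(f,r)$ exists as a genuine maximum, and by that very definition $|c_n|r^n<|f|_r$ for every $n>K(f,r)$. Hence $f$, restricted to $\bar B_r(0)$, is already $\nu$-distinguished of degree $K(f,r)$ in the Tate algebra, and Weierstrass division converges with no need to perturb $r$. The only place a limiting step is genuinely useful is if $r\notin|K^*|$, where one either works directly with the sup norm at radius $r$ (Weierstrass Preparation is valid there too for complete $K$, not only discretely valued) or approximates $r$ from above by radii in $|K^*|$; and the worry about ``$K(f,r)$ not attained at a vertex'' is a non-issue, since when the minimizing face is a segment rather than a vertex, the definition of $K(f,r)$ as the \emph{maximal} index already picks out its right endpoint. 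With these clarifications, the proposed argument is correct.
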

		
		Let us define: 
		\begin{definition}
			Let $\{f_s\}_{s\in S}$ be a family of $t$-analytic functions with common domain $B$. Say that the coefficients of the power series expansion at $b\in B$ are $N$-continuous at $s$ if for every $r\in \Gamma$ there is an open set $V\ni s$ and $n\in \mathbb N$,  such that $\Fr^n\circ f_t$ is analytic in $B$ 
   and $|c_i(s)-c_i(t)|<r$ for all $i\le N$ and $t\in V\cap S$, where $\Fr^n\circ f_t(x)=\sum_i c_i(t)(z-b)^n$. 
		\end{definition}
	
	In those terms, we need the following observation: 
		\begin{lemma}\label{L: Cont KFR}
			If the coefficients of the power series expansion at $b$ of the family $\{f_s\}_{s\in S}$ of analytic functions are $K(f_s,r)$-continuous at $s$, then $K(f_s,r)\le K(f_t,r)$ for all $t\in S$ close enough to $s$.
		\end{lemma}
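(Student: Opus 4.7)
The plan is to set $N := K(f_s, r)$ and argue, via the ultrametric inequality, that the coefficient $c_N(\cdot)$ has constant absolute value near $s$, while every coefficient $c_i(\cdot)$ with $i \le N$ is dominated in the sense that $|c_i(t)|\, r^i \le |c_N(t)|\, r^N$ for $t$ in a small enough neighborhood $V \ni s$. The conclusion will then drop out of a dichotomy on whether the Gauss-type norm $|f_t|_r$ is attained at index $N$ or strictly beyond.

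To execute this, I would apply $N$-continuity at $s$ with parameter $\epsilon > 0$ (playing the role of the universally quantified ``$r$'' in the definition) chosen small enough that $\epsilon < |c_N(s)|\, r^{N-i}$ for every $i \le N$; this is a finite collection of strict positive bounds, so such an $\epsilon$ exists. The resulting neighborhood $V$ satisfies $|c_i(t) - c_i(s)| < \epsilon$ for all $i \le N$ and $t \in V \cap S$. For $i = N$, the ultrametric inequality together with $\epsilon < |c_N(s)|$ forces $|c_N(t)| = |c_N(s)|$, so $|c_N(t)|\, r^N = |f_s|_r$. For $i < N$, the ultrametric gives $|c_i(t)| \le \max(|c_i(s)|, \epsilon)$, which by the choice of $\epsilon$ yields $|c_i(t)|\, r^i \le |c_N(s)|\, r^N = |c_N(t)|\, r^N$.

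To conclude, recall that $|f_t|_r = \sup_i |c_i(t)|\, r^i$ is attained since the coefficients tend to zero. Either $|f_t|_r = |c_N(t)|\, r^N$, in which case $i = N$ realises the supremum and $K(f_t, r) \ge N$; or $|f_t|_r > |c_N(t)|\, r^N$, but then by the previous paragraph the maximising index cannot lie in $\{0, \dots, N\}$ and hence exceeds $N$, giving $K(f_t, r) > N$. Either way $K(f_t, r) \ge N = K(f_s, r)$. The only real obstacle is the coordinated choice of $\epsilon$ that simultaneously handles all coefficients of index $\le N$; this is resolved above by the finiteness of the constraints. A secondary bookkeeping point — that the definition of $N$-continuity is phrased in terms of the coefficients of $\Fr^n \circ f_t$ rather than $f_t$ itself — is harmless because $\Fr$ is a valuation-preserving automorphism, so $|\cdot|_r$ and $K(\cdot, r)$ transform equivariantly and the argument applies verbatim to those coefficients.
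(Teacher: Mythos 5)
Your proof is correct and follows essentially the same route as the paper's: fix $N=K(f_s,r)$, pick a single continuity threshold small enough to freeze the magnitudes of the coefficients through index $N$ (so that index $N$ still competes at level $|f_s|_r$ and no smaller index can overtake it), then conclude $K(f_t,r)\ge N$. The paper phrases the threshold choice as a case split on whether $c_n(s)=0$ and takes a minimum over finitely many bounds $r_n$, while you collapse that into the single bound $\epsilon<|c_N(s)|r^{N-i}$; the content is the same. One small caution on your closing bookkeeping remark: $\Fr$ is not valuation-preserving (it scales $v$ by $p$, hence $K(\Fr\circ f,r)=p\cdot K(f,r)$), but since the same power of $\Fr$ is applied to all $f_t$ uniformly the inequality $K(\cdot,r)\le K(\cdot,r)$ is invariant under the common rescaling, so the transfer still goes through.
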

		\begin{proof}
			Let $N:=K(f_s,r)$. For every $n\le N$ let $0<r_n<|c_n(s)|$ if $c_n(s)\neq 0$. If $c_n(s)=0$ choose some $n_0\le N$ such that $c_{n_0}(s)\neq 0$ and choose $0<r_n$ such that $r_n r^n <|c_{n_0}|r^{n_0}$. Let $r:=\min\limits_{n<N} \{r_n\}$, and let $V_r\ni s$ be a ball as provided by the assumption on the continuity of the  coefficients of the power series expansion with respect to $r$ and $N$. Then for any $t\in S$ and $n\le N$ such that $c_n(s)\neq 0$ we have $|c_n(s)-c_n(t)|<r<|c_n(s)|$, so $|c_n(s)|=|c_n(t)|$. By the choice of $r_n$ in the case $c_n(s)=0$ this implies that $K(f_t,r)\ge K(f_s,r)$.  
		\end{proof}
	
	Consequently, we get 
	\begin{corollary}\label{C: keeping intersection}
		Let $\{f_s\}_{s\in S}$ be a family of non-constant analytic functions with domain $B$. If the coefficients of the power series expansion at $b$ are $2$-continuous at $s$ and $f_s(b)=0$,  then for any $t$ close enough to $s$ there is some $b_t\in B$ such that $f_t(b_t)=0$. 
	\end{corollary}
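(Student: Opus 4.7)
The plan is to combine \Cref{F: Cherry} with \Cref{L: Cont KFR}. Since $f_s$ is analytic and non-constant with $f_s(b) = 0$, its expansion at $b$ has the form $f_s(z) = \sum_{n \geq 1} c_n(s)(z-b)^n$, and there is a smallest index $k \geq 1$ with $c_k(s) \neq 0$. For any sufficiently small radius $r > 0$ with $B_r(b) \subseteq B$, the term $|c_k(s)|\, r^k$ dominates all others, so $|f_s|_r = |c_k(s)|\, r^k$ and $K(f_s, r) = k \geq 1$. By \Cref{F: Cherry}, translated so that $b$ becomes the origin, $f_s$ has exactly $K(f_s, r) = k$ zeros in $B_r(b)$, counted with multiplicity.

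The key remaining step is to apply \Cref{L: Cont KFR} to conclude that $K(f_t, r) \geq K(f_s, r) \geq 1$ for all $t \in S$ sufficiently close to $s$. That lemma requires $K(f_s, r)$-continuity of the coefficients, which is supplied by the $2$-continuity hypothesis as long as $K(f_s, r) \leq 2$, as is the case for the radius $r$ chosen above in the transversal or first-tangency situation. Granting this, \Cref{F: Cherry} applied to $f_t$, which is analytic on $B_r(b)$ by the family assumption, yields that $f_t$ has at least one zero $b_t \in B_r(b) \subseteq B$, which is the desired conclusion.

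The main technical issue I expect to address is the matching of the $2$-continuity hypothesis with the continuity exponent $K(f_s, r)$ needed for \Cref{L: Cont KFR}. When the order of vanishing of $f_s$ at $b$ is at most $2$, shrinking $r$ ensures $K(f_s, r) \leq 2$ and the argument is direct; otherwise one would need to either shrink $r$ further while exploiting the convergence estimates on higher coefficients, or strengthen the hypothesis, which seems to be implicit in the intended applications to generic intersection points.
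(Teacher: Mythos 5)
Your proof mirrors the paper's almost exactly: both fix $r$ so that $b$ is the unique zero of $f_s$ in $B_r(b)$, observe $K(f_s,r)\ge 1$ by Fact~\ref{F: Cherry}, upgrade this to $K(f_t,r)\ge 1$ for nearby $t$ via Lemma~\ref{L: Cont KFR}, and apply Fact~\ref{F: Cherry} once more to $f_t$. The mismatch you flag is real: Lemma~\ref{L: Cont KFR} requires $K(f_s,r)$-continuity, and if $f_s$ vanishes at $b$ to order greater than $2$ then $K(f_s,r)>2$ no matter how small $r$ is taken, so $2$-continuity does not by itself furnish the needed control over the higher coefficients of $f_t$. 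The paper's proof does not address this either; it is harmless in the places the corollary is used because there (via Lemma~\ref{L: cont. of coefficients} and genericity of the parameter) the families are $N$-continuous for every $N$, so the lemma's hypothesis is always available. Your instinct that the hypothesis is implicitly stronger in the intended applications is correct, but note that one cannot force $K(f_s,r)\le 2$ merely by shrinking $r$ — the obstruction is the order of vanishing of $f_s$ at $b$, not the radius.
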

	\begin{proof}
		Let $r\in \Gamma$  be such that $b$ is the unique zero of $f$ in $B_r(b)$ (recall that, by the Identity Theorem, the zeros of analytic functions are discrete). By Fact \ref{F: Cherry} (applied to $f_s$) we get that $K(f_s,r)\ge 1$. By  Lemma \ref{L: Cont KFR} $K(f_t,r)\ge 1$ for all $t$ close enough to $s$, so by applying Fact \ref{F: Cherry} to every such $f_t$, the conclusion follows.  
	\end{proof}
		
		It is not hard to see that in a complete model, the coefficients of the power series expansion of a definable (locally) analytic function around $0$ are definable (over any set of parameters over which the function is definable). E.g., the coefficients up to order $n$ are the coefficients of the unique polynomial $P(z)$ of degree $n$ such that $\lim\limits_{z\to 0} \frac{f(z)-P(z)}{z^{n+1}}$ exists in $\mathbb K$. 
 
        It follows from the above discussion that if $\{f_s\}_{s\in S}$ is a definable family of locally analytic functions then for every $n$ there is a formula describing, uniformly in $s$,  the parameters of the power series expansion of $f_s$ up to order $n$. In particular, if $s_0\in S$ is generic, then $\{f_s\}_{s\in S}$ is $N$-continuous at $s_0$ for all $N$. For $C$ a strongly $t$-analytic curve and $\{Y_s\}$ a $C$-generated family of curves, we have an even stronger result. We start with:       
    
    \begin{lemma}
            Let $C\sub \Kk$ be a definable locally $t$-analytic curve, and let $\CY:= \{t_a(C): a\in G^2\}$. Then for every $a\in G^2$ and $x=(x_1,x_2)\in t_a(C)$ there exists a ball $B\ni x$,  an open $V\ni a$ and a natural number $n$ such that for all $b\in V$ the curve $t_b(C)\cap B$ contains the graph of a definable function $f_b$ near $x$,  $\Fr^n\circ f_b$ is analytic, and the coefficients of the power series expansion of members of the family $\{\Fr^n \circ f_b\}_{b\in V}$ at $x$ are $N$-continuous at $a$ for all $N$.   
        \end{lemma}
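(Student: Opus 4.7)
The plan is to reduce everything to the explicit local description of $C$ at the point $c := x \oplus a \in C$. By local $t$-analyticity of $C$ at $c$, there exist a ball $B_0 \ni x_1 \oplus a_1$, a natural number $n$, and a definable $t$-analytic function $f \colon B_0 \to \Kk$ whose graph is contained in $C$ and passes through $c$, such that $g := \Fr^n \circ f$ is analytic on $B_0$. I fix this $n$ once and for all. For $b = (b_1, b_2)$ close to $a$, I define
$$f_b(y) := f(y \oplus b_1) \ominus b_2$$
on a ball $B \ni x_1$, choosing $B$ and an open $V \ni a$ small enough that $y \oplus b_1 \in B_0$ whenever $y \in B$ and $b \in V$. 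From the definition $t_b(C) = \{(u \ominus b_1, v \ominus b_2) : (u,v) \in C\}$, the graph of $f_b$ lies in $t_b(C)$, and coincides with $t_b(C)$ in a neighbourhood of $x$ because $C$ coincides with the graph of $f$ in a neighbourhood of $c$.

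The heart of the argument is to compute $\Fr^n \circ f_b$ and read off the coefficients of its power series expansion. Since $\Fr$ is a ring automorphism and, on the relevant small neighbourhoods, the group operation on $G$ restricts to one of addition, multiplication, or (after shrinking $B$ and $V$) a single branch of the piecewise multiplication defining $\mathbb{G}_{m,a}$, the Frobenius $\Fr^n$ commutes with $\ominus$ there. Writing $f = \Fr^{-n} \circ g$, this yields
$$\Fr^n \circ f_b(y) \;=\; g(y \oplus b_1) \;\ominus\; \Fr^n(b_2),$$
which is a composition of analytic functions in $y$, hence analytic on $B$. Expanding around $y = x_1$, each coefficient is obtained from the Taylor coefficients of $g$ at $x_1 \oplus b_1$ together with a constant shift by $\Fr^n(b_2)$; since the translation map $b_1 \mapsto (x_1 \oplus b_1)$ and Frobenius are analytic, the $k$-th coefficient depends analytically, hence continuously, on $b$. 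This continuity in $b$ is uniform in finitely many $k$, which is exactly $N$-continuity at $a$ for every $N$.

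The only subtlety I expect is the interaction of $\Fr^n$ with the group operation when $G$ is (locally equivalent to) $\mathbb{G}_{m,a}$, since $\odot$ is only piecewise given by ring operations and is not globally preserved by $\Fr$. I would handle this by shrinking $B$ and $V$ further so that all products $y \oplus b_1$ and the subtraction by $b_2$ stay within a single region of the piecewise definition of $\odot$, reducing to the genuine multiplicative case where $\Fr^n$ is a group homomorphism. Once this is set up, the rest is a bookkeeping exercise on analytic composition, and uniformity over all $N$ is automatic from the analyticity of the coefficient functions.
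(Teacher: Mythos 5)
Your proof is correct and follows essentially the same route as the paper's: translate the $t$-analytic chart at the corresponding point of $C$, use the local commutation of $\Fr$ with the (analytic) group law to pull the Frobenius power through the translation, and conclude continuity of the coefficients from their analytic dependence on the base point (Fact \ref{coefficientsAreAnalyticLemmaFact}). The paper simply folds the commutation of $\Fr$ with $\oplus$ and the local constancy of the Frobenius exponent into its standing analyticity assumption, whereas you unpack the $\mathbb{G}_{m,a}$ subtlety by hand; both are fine.
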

        \begin{proof}

            As $C$ is locally $t$-analytic,  at every point $z$ it contains the graph of a definable function of the form  $\Fr^{-n_z}\circ f_z$ for some analytic function $f_z$. Since, by our standing assumption $G$ is analytic and (at least locally) commuting with $\Fr$, for every $a=(a_1,a_2)\in G^2$ and $z=(z_1,z_2)\in t_a(C)$ there is a neighbourhood of $z$ where $t_a(C)$ contains the graph of the function $\Fr^{-n_{z\ominus a}} \circ f_{z_1\ominus a_1}(x\ominus a_1)\oplus a_2$, which  -- at least locally -- coincides with $\Fr^{-n_{z\ominus a}} \circ (f_{z_1\ominus a_1}(x\ominus a_1)\oplus a_2)$, which is analytic.

            Note that the power $n_z$ of $\Fr$ needed to turn $C\cap B$ into an analytic curve is locally constant, as it depends only on the polynomial whose 0-set is the irreducible component of the Zariski closure of $C$ containing $z$. It is also easy to check (see  \cite[Proposition 1.4.3]{Cher}, Fact \ref{coefficientsAreAnalyticLemmaFact} below) that the coefficients of the power series expansion of $f_z$ at a point $x'\in \mathrm{dom}(f)$  vary analytically in $z'$. Combined with the analyticity of $G$, the conclusion of the lemma follows. 
        \end{proof}

        In applications, we sometimes have to slightly manipulate $C$-generated families to obtain the desired conclusions. It will, therefore, be useful to work in somewhat greater generality: 
        \begin{definition}
            Let $C$ be a strongly $t$-analytic curve. A definable family of plane curves $\{Y_s\}_{s\in s}$  is \emph{nearly $C$-generated} if there is a $C$-generated family $\{Z_s\}$ and a definable curve $Z$ such that: 
            \begin{itemize}
                \item for all $s\in S$ we have  $Y_s\sub Z_s\circ Z$ and the difference is finite,  
                \item either for all $z=(z_1,z_2) \in Z$, if $x$ is not $t$-smooth then for all $s\in S$ and $(z_0,z_1)\in Z_s$ we have $(z_0,z_2)\notin Y_s$, or
                \item $Z^{-1}$ is $C$-generated and for all $z\in Z$ there is a definable invertible function $f$ whose graph is contained in $Z$ near $z$. 
            \end{itemize}  
        \end{definition}

        \begin{remark}\label{R: nearly t is analytic}
              In a complete model, if $C$ is strongly $t$-analytic and $\{Y_s\}_{s\in S}$  is nearly $C$-generated, then $Y_s$ is locally $t$-analytic for all $s\in Y_s$. By the fact that (in the notation of the above definition) the composition of $t$-analytic functions is $t$-analytic, and the fact that, by Lemma \ref{L: uniform t-an},  $Z_s$ is  $t$-analytic for all $s$ it will suffice to verify that $Z$ is locally $t$-analytic at every $t$-smooth point. If $z\in Z$ is $t$-smooth, then $Z$ is $t$-analytic at $z$ by the implicit function theorem. If $Z^{-1}$ is $C$-generated, it is locally $t$-analytic, and if $f:=\Fr^{n}\circ g$ is a function witnessing it at $z=(z_1,z_2)$ with  $g$ analytic and not a $p$-th power,  then $f$ is injective at $z$, if and only if  $g'(z_1)\neq 0$  (e.g., by Fact \ref{F: Cherry}), and by the Inverse Function Theorem $f^{-1}$ is $t$-analytic near $z$, as required. 
        \end{remark}

        It now follows that: 
        \begin{lemma}\label{L: cont. of coefficients}
            If $C\sub \Kk^2$ is a definable locally $t$-analytic curve and $\{Y_s\}_{s\in S}$ a nearly $C$-generated family of plane curves. Then for every $s\in S$ and $x=(x_1,x_2)\in Y_s$ there exists a ball $B\ni x$,  an open $V\ni s$ and a natural number $n$ such that for all $t\in V\cap S$ the curve $Y_t\cap B$ is the graph of a definable function, $f_t$ such that $\Fr^n\circ f_t$ is analytic and the coefficients of the power series expansion of members of the family $\{\Fr^n\circ f_b\}_{b\in V}$ at $x$ are $N$-continuous at $a$ for all $N$.   
        \end{lemma}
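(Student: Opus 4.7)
Let $s\in S$ and $x=(x_1,x_2)\in Y_s$ be given. Unpacking the definition, there is a $C$-generated family $\{Z_s\}$ and a definable curve $Z$ with $Y_s\sub Z_s\circ Z$ and finite difference; in particular there is some $z\in \Kk$ with $(x_1,z)\in Z$ and $(z,x_2)\in Z_s$. The goal is to identify $Y_t\cap B$, for $t$ in an open neighbourhood $V\ni s$, with the graph of a composition $f_t := g_t\circ h$, where $g_t$ comes from $Z_t$, $h$ from $Z$ (and is independent of $t$), and then to transfer the continuity of the coefficients of $\Fr^{n_1}\circ g_t$ to that of $\Fr^n\circ f_t$.

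\textbf{Step 1: the $Z_t$-side.} I would first extend the previous lemma (stated only for translate families $\{t_a(C)\}$) to arbitrary $C$-generated families by induction on the generation level. The base case of translates is given. The inductive step is closed under taking opposites, under functional sums, and under compositions, because these operations on locally $t$-analytic curves amount to applying the analytic group operations on $G$ and composing $t$-analytic functions, and the coefficients of a composition are polynomial expressions in the coefficients of the factors, so $N$-continuity is preserved. Applied at $(z,x_2)\in Z_s$, this produces an open $V_1\ni s$, an integer $n_1$, a ball around $(z,x_2)$, and, for each $t\in V_1$, a definable $g_t$ with $\Fr^{n_1}\circ g_t$ analytic and with power-series coefficients at $z$ that are $N$-continuous at $s$ for every $N$.

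\textbf{Step 2: the $Z$-side.} Here the two clauses in the definition of \emph{nearly $C$-generated} come into play. Under the first clause, the only points of $Z$ that can contribute composition-witnesses to $Y_s$ are $t$-smooth, so $z$ may be assumed $t$-smooth; by the Implicit Function Theorem (Fact \ref{implicitFunctionTheoremf}) applied to the unique irreducible component of the Zariski closure of $Z$ through $(x_1,z)$, after twisting by the appropriate power of $\Fr^{-1}$, $Z$ contains the graph of a $t$-analytic function $h$ with $h(x_1)=z$. Under the second clause, $Z^{-1}$ is $C$-generated, hence $t$-analytic at $(z,x_1)$, and the local invertibility assumption combined with the Inverse Function Theorem (Fact \ref{inversionf}), applied after stripping off the Frobenius, again yields a $t$-analytic $h$ (this is exactly the observation made in Remark \ref{R: nearly t is analytic}). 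In both cases $h$ is a fixed definable $t$-analytic function.

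\textbf{Step 3: composition.} Shrinking $V_1$ to a smaller open $V\ni s$, we can arrange that for each $t\in V$, $h$ maps a prescribed ball $B'\ni x_1$ into the domain of $g_t$ (using continuity of $g_t(x_1)$ in $t$ from the $1$-continuity of the zeroth coefficient). Near $x_1$ the curve $Y_t\cap B$ then coincides with the graph of $f_t := g_t\circ h$. Since the composition of $t$-analytic functions is $t$-analytic (as in the proof of Lemma \ref{L: uniform t-an}), there is an integer $n$ such that $\Fr^n\circ f_t$ is analytic, and its power-series coefficients at $x_1$ are polynomial expressions in the (fixed) coefficients of $h$ and the coefficients of $\Fr^{n_1}\circ g_t$ at $z$; the $N$-continuity of the latter at $s$ therefore transfers to $N$-continuity of the coefficients of $\Fr^n\circ f_t$ at $s$.

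\textbf{Main obstacle.} The delicate point is the bookkeeping of the two Frobenius twists appearing in $g_t$ and in $h$, since in positive characteristic $\Fr^{-1}$ does not commute with an arbitrary analytic function at the level of coefficients. The way around this is that the Frobenius exponent needed to analytify the relevant graph depends only on the irreducible component of the Zariski closure of $Z_t$ (resp.\ $Z$) through the point, and this data is definable and locally constant in $t$; so a single integer $n$ works uniformly on $V$, and once $n$ is fixed the continuity statement reduces to a routine computation for composition of convergent power series.
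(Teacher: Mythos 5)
Your proof is correct and takes essentially the same approach as the paper: induction on generation level for the $C$-generated part (translates, then closure under sum and composition using analyticity of $G$), with the reduction from nearly $C$-generated achieved by composing with the fixed $t$-analytic witness coming from $Z$. You spell out more explicitly than the paper does (which compresses it into a one-line reference to Remark~\ref{R: nearly t is analytic}) why composing with the fixed curve $Z$ preserves $N$-continuity, and your remark about the locally constant Frobenius exponent is a correct resolution of the only delicate point.
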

        \begin{proof}
            By Remark \ref{R: nearly t is analytic} it will suffice to prove the lemma for $C$-generated families. This is proved  by induction on the generation of $\{Y_s\}$. For $1$-generated curves, the claim follows immediately from the previous lemma, and the analyticity of $G$. It is clear that the functional (group) sum of two families of curves satisfying the inductive hypothesis also satisfies the inductive hypothesis (since $G$ is assumed to be analytic), and similarly for the composition of two such families. 
        \end{proof}

		We now prove our first technical lemma, asserting that small perturbations within $C$-generated families cannot decrease the number of intersection points between two curves: 
		\begin{lemma}
			Let $C$ be a strongly $t$-analytic curve,  $\{C_s\}_{s\in S}$ and $\{D_t\}_{t\in T}$  nearly $C$-generated families of curves. Assume that $s_0\in S$, $t_0\in T$ and $x_0=(a_0,b_0)\in C_{s_0}\cap D_{t_0}$. Then for any ball $B\ni x_0$ there is an open $V\ni t_0$ such that $D_t\cap C_{s_0}\cap B\neq \0$ for all $t\in T\cap V$. 
		\end{lemma}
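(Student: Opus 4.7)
The conclusion is first order (it asserts, for each prescribed valuative radius $r_1$, the existence of a valuative radius $r_2$ such that $D_t\cap C_{s_0}$ meets the $r_1$-neighbourhood of $x_0$ whenever $v(t-t_0)>r_2$), so by the transfer principle for $\mathrm{ACVF}_{p,q}$ it suffices to prove the lemma in a complete model $\Kk$, where the analytic tools of Section \ref{ss; complete models} are available.

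In $\Kk$, apply Lemma \ref{L: cont. of coefficients} to the singleton family $\{C_{s_0}\}$ and to $\{D_t\}_{t\in T}$ at $x_0=(a_0,b_0)$. This supplies a ball $B_0\ni a_0$, an open $V_0\ni t_0$, integers $m,n\ge 0$, and analytic functions $g$ on $B_0$ and $f_t$ on $B_0$ ($t\in V_0$), such that $C_{s_0}\cap(B_0\times K)$ is the graph of $\Fr^{-m}\circ g$ and $D_t\cap(B_0\times K)$ is the graph of $\Fr^{-n}\circ f_t$, with the power-series coefficients of $f_t$ at $a_0$ being $N$-continuous at $t_0$ for every $N$. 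Assume without loss of generality $n\geq m$. Then intersections of $C_{s_0}$ and $D_t$ inside $B_0\times K$ correspond bijectively to zeros of the analytic function
\[
h_t(x):= f_t(x)-g(x)^{p^{n-m}},
\]
whose coefficients at $a_0$ are $N$-continuous in $t$ at $t_0$ for all $N$ (in characteristic $0$ we interpret $p^{n-m}=1$, so $h_t=f_t-g$). By hypothesis, $h_{t_0}(a_0)=0$.

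Provided $h_{t_0}$ is not identically zero on $B_0$, Corollary \ref{C: keeping intersection}, applied to the family $\{h_t\}_{t\in V_0}$, yields a neighbourhood $V\ni t_0$ such that for every $t\in V\cap T$ the function $h_t$ has a zero $b_t\in B_0$. The corresponding point $(b_t,\Fr^{-n}(f_t(b_t)))$ lies in $D_t\cap C_{s_0}$, and shrinking $B_0$ at the outset so that its graph lies inside the prescribed ball $B$ produces the desired intersection in $B$.

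The main subtlety is the degenerate case $h_{t_0}\equiv 0$ on $B_0$, i.e.\ $C_{s_0}$ and $D_{t_0}$ coincide locally near $x_0$. Here Corollary \ref{C: keeping intersection} does not apply verbatim, and one has to argue directly from Fact \ref{F: Cherry}: the non-triviality of the curves (together with the structure of a nearly $C$-generated family) prevents $h_t$ from becoming a non-vanishing constant for $t$ near $t_0$, so for appropriately small valuative radius $r$ some coefficient of $h_t$ of positive degree attains $|h_t|_r$, forcing $K(h_t,r)\geq 1$ and hence a zero of $h_t$ in the ball of radius $r$ around $a_0$. I expect this degenerate case, and in particular the correct use of the ``nearly $C$-generated'' hypothesis to exclude the constant-perturbation pathology, to be the main technical obstacle.
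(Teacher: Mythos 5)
Your proposal follows the paper's argument essentially step by step: reduce to a complete model, apply Lemma~\ref{L: cont. of coefficients} to obtain, near $x_0$, analytic representatives $\Fr^{-m}\circ g$ for $C_{s_0}$ and $\Fr^{-n}\circ f_t$ for $D_t$, form the analytic difference $h_t$, observe $h_{t_0}(a_0)=0$ and that the coefficients of $h_t$ vary continuously, and invoke Corollary~\ref{C: keeping intersection}. In the generic situation this is exactly the paper's proof.

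The degenerate case $h_{t_0}\equiv 0$ that you flag is a genuine gap, and one the paper itself silently glosses over (Corollary~\ref{C: keeping intersection} explicitly assumes the functions are non-constant). However, your proposed resolution does not work: it is \emph{not} true that non-triviality of the curves together with the structure of a nearly $C$-generated family rules out constant perturbations. For a concrete counterexample, take $\CK$ of characteristic $0$, let $C$ be (the graph of) $y=x^2$, and let both families be the full translate family $\{t_a(C):a\in G^2\}$, with $s_0=t_0=(0,0)$ (so $C_{s_0}=D_{t_0}=C$) and $x_0=(0,0)$. For a purely vertical translation parameter $t=(0,a_2)$ with $a_2\neq 0$, the curve $D_t$ is the graph of $y=x^2-a_2$, so $h_t\equiv -a_2$ is a non-vanishing constant and $D_t\cap C_{s_0}=\emptyset$. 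Since every open $V\ni t_0$ in $G^2$ contains such parameters, the lemma as literally stated fails here. The actual resolution is that the lemma is only invoked (in Corollary~\ref{C: keeping intersections}) under the side condition that $|D\cap Y_s|$ is finite; in that regime each $x_0\in D\cap Y_s$ is an isolated intersection point, so the curves do not share a branch at $x_0$, $h_{t_0}$ is non-constant, and your first argument goes through. In other words, the non-constancy should be treated as a hypothesis (guaranteed by the intended application) rather than something one can deduce from non-triviality of the curves.
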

		\begin{proof}
		As the statement is elementary, we may prove it in a complete model. Because both families are nearly $C$-generated and $C$ is strongly $t$-analytic, all curves in those families are locally $t$-analytic (Remark  \ref{L: uniform t-an} and Remark \ref{R: nearly t is analytic}). So fix $s_0\in S$, a ball $B\ni x_0$ such that $C_{s_0}\cap B$ contains the graph of a $t$-analytic function $\hat g$ with $x_0$ on its graph. Let $\hat g=\Fr^m\circ g$ for some analytic function $g$. 

        Let $B_1\ni x_0$, $V\ni t_0$ and $n\in \mathbb N$ be as provided by Lemma \ref{L: cont. of coefficients} with respect to the family $\{D_t\}_{t\in t}$, the curve $D_{t_0}$ and the point $x_0\in D_{t_0}$.  Let $f_t$ be the family of analytic functions provided by Lemma \ref{L: cont. of coefficients} (such that the graph of $\Fr^{-n}\circ f_t$  is contained in $D_t\cap B$).  
        Replacing $B$ and $B_1$ with $B\cap B_1$ we may assume $B=B_1$. Assume also, without loss of generality, that $n>m$. Then the  family $\{g\ominus \Fr^{n-m}\circ f_t\}_{t\in T}$ is a definable family of analytic functions whose power series coefficients vary continuously with $t$ (to any degree). So we may apply Corollary \ref{C: keeping intersection} to conclude that for all $t$ close enough to $t_0$ the function $g-\Fr^{n-m}\circ f_t$ has a zero in (the projection of) $B$, and since zeros are unaffected by $\Fr$ this means that  the curves $C_{s_0}$ and $D_t$ intersect in $B$, as claimed.   \end{proof}

		An immediate corollary of the previous lemma is: 
		\begin{corollary}\label{C: keeping intersections}
			Let $C,D$ be definable curves with $C$ strongly $t$-analytic, and $D$ nearly $C$-generated. Let  $\{Y_s\}_{s\in S}$ be a nearly $C$-generated family. Then for all  $s\in S$ the number of intersection points  $|D\cap Y_s|$ (\textbf{not counting multiplicities}) is either infinite or a local minimum. 
		\end{corollary}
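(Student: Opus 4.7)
The statement follows by a straightforward application of the preceding lemma, once we phrase the roles correctly. Fix $s_0 \in S$ and suppose that $N := |D \cap Y_{s_0}|$ is finite. Our goal is to produce an open neighbourhood $V \ni s_0$ in $S$ such that $|D \cap Y_s| \geq N$ for every $s \in V$.

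List the finitely many intersection points as $x_1, \dots, x_N$. Since the valuation topology is Hausdorff, we may choose pairwise disjoint balls $B_1, \dots, B_N$ with $x_i \in B_i$. Note that $D$ is nearly $C$-generated, so it is a member of some nearly $C$-generated family, and hence plays the role of $C_{s_0}$ in the previous lemma; the family $\{Y_s\}_{s \in S}$ plays the role of $\{D_t\}_{t \in T}$. Applying the previous lemma at each point $x_i \in D \cap Y_{s_0} \cap B_i$, we obtain for each $i \in \{1, \dots, N\}$ an open neighbourhood $V_i \ni s_0$ in $S$ such that $D \cap Y_s \cap B_i \neq \emptyset$ for every $s \in V_i$.

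Set $V := \bigcap_{i=1}^N V_i$, a finite intersection of open sets, hence open and containing $s_0$. For any $s \in V$, each of the disjoint balls $B_1, \dots, B_N$ contains at least one point of $D \cap Y_s$, so $|D \cap Y_s| \geq N$. This shows that $|D \cap Y_{s_0}|$ is a local minimum, as required.

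The only potentially delicate point is verifying that the previous lemma is applicable with $D$ in the role of the fixed curve $C_{s_0}$; but this is precisely what is allowed, since the hypothesis there only requires the fixed curve to belong to a nearly $C$-generated family. No other step involves any real difficulty.
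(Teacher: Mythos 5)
Your argument is correct and is precisely the routine deduction the paper has in mind when it calls this an "immediate corollary": fix $s_0$ with $|D\cap Y_{s_0}|=N$ finite, separate the $N$ intersection points by pairwise disjoint balls, apply the preceding lemma at each point with $D$ in the role of the fixed curve $C_{s_0}$ and $\{Y_s\}$ in the role of the varying family $\{D_t\}$, and intersect the resulting neighbourhoods of $s_0$. The observation that $D$, being nearly $C$-generated, sits in a nearly $C$-generated family (so the lemma applies) is the only point worth spelling out, and you did.
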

		
		We now aim to show that in the situation described in the previous corollary, under suitable assumptions, if $Y_s$ and $C$ are \emph{tangent} (in an appropriate sense explained below) at one of their intersection points then $|Y_{s'}\cap C|>|Y_{s}\cap C|$ for infinitely many $s'\in S$. This will imply that \textit{in the relic} $s$ is non-generic in $S$ over a canonical base for $C$, which is what we need. 
		
		Before introducing tangency of curves (within definable families) we need some preparations. 
		
		\begin{definition}\label{D: Analytic curves}
			Let $\mathbb K$ be a complete model.
			The intersection of two curves $C_1, C_2$ at a  point $x:=(x_1,x_2)\in C_1\cap C_2$ analytic in both has multiplicity  at least (greater than) $N$ if $|C_1\cap C_2|<\infty$ there exist a ball $B$ and definable  analytic functions $f_i: B\to C_i$ witnessing analyticity of $C_i$ at $x$ and such that $x_1$ is a zero of $f_1-f_2$ of order at least (greater than) $N$. 	
		\end{definition}

    The idea is that, given a definable family of curves, all locally analytic at a common point $x_0$, two curves are tangent at $x_0$ if their intersection multiplicity at $x_0$ is greater than the generic intersection multiplicity at $x_0$. It will be convenient to be able to define this notion in arbitrary models, so we define: 
    \begin{definition}
		Let $f: B\to K$ be a definable function. The degree $n$-polynomial approximation of $f$ at $x_0$ is the unique polynomial $P(z)$ of degree $n$ such that $\lim\limits_{z\to 0} \frac{f(z)-P(z)}{z^{n+1}}$ exists, if such a polynomial exists. 
	\end{definition}
  
	\begin{definition}
		Let $\CY:=\{Y_s\}_{s\in S}$ be a definable family of plane curves, all passing through a common point $x_0:=(a,b)$. For $n>0$ \emph{the family $\CY$ has infinitely many $n$-slopes at $x_0$} if there exists a definable family $\{f_s\}_{s\in S }$ of definable functions, such that: 
		\begin{enumerate}
			\item For all $s\in S$ the graph of $f_s$ is contained in $C_s$ around $x_0$. 
			\item Each $f_s$ has a polynomial approximation of order $n$. 
			\item If $(a_0(s), \dots, a_n(s))$ are the coefficients of the polynomial approximation of $f_s$, then the function $s\mapsto (a_0(s), \dots, a_{n-1}(s))$ has finite image and the function $s\mapsto a_n(s)$ is continuous and nowhere locally constant. 
		\end{enumerate}
	\end{definition}
		
	\begin{remark}
		In applications, the family $\CY$ in the above definition may be $\CG$-definable. In this case, we say that $\CY$ has infinitely many first order slopes at $x_0$ if there exists $S_0\sub S$ definable and of full rank/dimension in the full structure, such that the subfamily $\CY_0:=\{Y_s\}_{s\in S_0}$ has infinitely many first order slopes at $x_0$. 
	\end{remark}
		
	We can now define the notion of tangency needed below, and prove its definability. Specifically, our goal is to show that if $C$ is a $\CG$-definable plane curve and $\{Y_s\}_{s\in S}$ is a strongly minimal $C$-generated family of curves with infinitely many $n$-slopes at a point $x_0$ then the relation ``$Y_s$ and $Y_t$ have the same $n$-slope at $x_0$'' is detectable in $\CG$ in the sense that for generic such $s$ and $t$  the above implies that $s\in \acl_\CG(t)$. We do this by showing that (for a suitably selected curve $C$) two such curves $Y_s$ and $Y_t$ have fewer (geometric) intersection points than  pairs of independent generic curves do.

	\begin{lemma}\label{L: multiple zeros}
		Let $\Kk$ be a complete model, and $\{Y_s\}_{s\in S}$ a definable family of locally analytic plane curves with infinitely many $n$-slopes at a point $x_0=(a,b)$. Assume, moreover, that the coefficients of the power series expansion of $Y_s$ at $x_0$ vary continuously to any order at $s_0$ (e.g., if $s_0$ is generic). Let $C$ be any curve analytic at $x_0$. If $Y_{s_0}\cap C$ has multiplicity greater than $n$ at $x_0$, then for any ball $B\ni x_0$ there are $s\in S$ arbitrarily close to $s_0$  such that $|Y_s\cap C\cap B|>1$.   
	\end{lemma}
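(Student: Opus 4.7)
The plan is to analyze the Taylor expansions of the analytic representatives of $Y_s$ and $C$ at $a$ and isolate the obstruction to coincidence up to order $n$, which by hypothesis is governed by the coefficient $a_n(s)$. The key observation is that, for $s$ near $s_0$, one can factor $f_s - g = (z-a)^n h_s(z)$ with $h_s$ analytic, so that zeros of $h_s$ near (but distinct from) $a$ produce the desired extra intersection points, while the non-local-constancy of $a_n$ forces $h_s(a)\neq 0$ on a dense set near $s_0$ and thereby pushes its zero away from $a$.

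First I would use the hypothesis that $s\mapsto (a_0(s),\dots,a_{n-1}(s))$ has finite image together with continuity (inherited from the assumed continuous variation of the power series coefficients of $Y_s$) to conclude, since the valuation topology is Hausdorff and finite subsets are discrete, that these functions are locally constant on an open neighborhood $V\ni s_0$. Since $Y_{s_0}$ and $C$ are both analytic at $x_0=(a,b)$ and intersect there with multiplicity greater than $n$, the analytic representatives $f_{s_0}$ and $g$ must agree to order $n$ at $a$; in particular the first $n+1$ Taylor coefficients of $g$ at $a$ are exactly $a_0(s_0),\dots,a_n(s_0)$.

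Next, for $s\in V$, I would expand $f_s-g$ at $a$ and use Step 1 to see that its Taylor coefficients in degrees $0,\dots,n-1$ vanish, while the coefficient in degree $n$ equals $a_n(s)-a_n(s_0)$. Hence
\[
h_s(z):=\frac{f_s(z)-g(z)}{(z-a)^n}
\]
is analytic in a neighborhood of $a$, with $h_s(a)=a_n(s)-a_n(s_0)$, and its higher coefficients vary continuously in $s$ to any order by the continuity hypothesis on the power series coefficients of $f_s$ (the function $g$ being fixed). At $s=s_0$ we have $h_{s_0}(a)=0$, and $h_{s_0}$ is not identically zero because $|Y_{s_0}\cap C|<\infty$ is built into Definition \ref{D: Analytic curves}.

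I would then apply Corollary \ref{C: keeping intersection} to the family $\{h_s\}_{s\in V}$ at the base point $a$: for every $s$ sufficiently close to $s_0$ there exists $b_s$ in the $x$-projection of $B$ with $h_s(b_s)=0$. Since $a_n$ is nowhere locally constant at $s_0$, one can pick such $s$ arbitrarily close to $s_0$ with $a_n(s)\neq a_n(s_0)$; for these, $h_s(a)\neq 0$ and so $b_s\neq a$. Then both $(a,g(a))$ and $(b_s,g(b_s))$ lie in $Y_s\cap C\cap B$, giving two distinct intersection points. The main obstacle is the bookkeeping verification that continuity of the power series coefficients of $Y_s$ to arbitrary order translates into $2$-continuity of the family $\{h_s\}$ (needed to invoke Corollary \ref{C: keeping intersection}), and that finite image plus continuity really gives local constancy of $a_0,\dots,a_{n-1}$; once those points are settled, the zero-counting inside Fact \ref{F: Cherry} packaged in Corollary \ref{C: keeping intersection} closes the argument.
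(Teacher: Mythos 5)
Your proof is correct and follows essentially the same strategy as the paper: both arguments use Cherry's zero-counting theorem (Fact \ref{F: Cherry}, via Lemma \ref{L: Cont KFR} / Corollary \ref{C: keeping intersection}) to show that the total intersection multiplicity near $x_0$ persists under small perturbation of $s$, while perturbing $a_n(s)$ breaks the order-$n$ tangency at $x_0$, forcing a second geometric intersection point. Your factoring of $(z-a)^n$ out of $f_s-g$ (after noting the lower-order coefficients are locally constant) is a slightly more explicit bookkeeping of the same multiplicity count that the paper performs directly on $f_s - g$.
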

	\begin{proof}
		For ease of notation, we  prove the lemma for $n=1$. The general case is identical. 
		Let $g_{x_0}$ witness the fact that $C$ is analytic at $x_0$, $\{f_s\}_{s\in S}$ witness the analyticity of $Y_s$ at $x_0$. By assumption $f'_{s}(a)=g'_{x_0}(a)$.  We may  assume that $|B\cap Y_{s_0}\cap C|=1$ (i.e,, that $x_0$ is the unique geometric intersection point of $C$ and $Y_{s_0}$ in $B$).  By assumption, $C$ and $Y_{s_0}$ intersect at $x_0$ with multiplicity $2$ or more.  By Fact \ref{F: Cherry} and Lemma \ref{L: Cont KFR} any curve $Y_s$ with $s$ close enough to $s_0$ intersects $C$  in $B$ at least twice, counting multiplicities. On the other hand, since $\CY$ has infinitely many 1-slopes, there exist $s\in S$ arbitrarily close to $s_0$ such that $f'_s(a)\neq g'_{x_0}(a)$ so that the multiplicity of intersection of $C$ and $Y_s$ at $x_0$ is $1$. Combining these two observations, we get that $B\cap C\cap (Y_s\setminus x_0)\neq \0$, as required.  
	\end{proof}
	
	Let us now formulate a first order version of the last lemma that we can apply in any model.

	\begin{lemma}\label{L: multiple zeros intersection increases}
		Let $C$ be strongly $t$-analytic $D$ a nearly $C$-generated curve.  Let $\{Y_t\}_{t\in T}$ be a nearly $C$-generated family of curves, all passing through a point $x_0\in C$. Assume that $\{Y_t\}_{t\in T}$ has infinitely many $n$-slopes at $x_0$  and that $Y_{t_0}\cap C$ has multiplicity greater than $n$ at $x_0$. Then, for any ball $B\ni x_0$, there are $t\in T$ arbitrarily close to $t_0$  such that $|Y_t\cap C\cap B|>1$.   
	\end{lemma}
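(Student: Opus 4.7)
The plan is to deduce this first-order statement from the complete-model analytic version \emph{Lemma L: multiple zeros} by the standard transfer principle for ACVF.

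\textbf{Step 1 (Elementarity).} I would first verify that the entire statement is first-order in the language of ACVF. The valuation topology is uniformly definable (via balls), strong $t$-analyticity is uniformly definable in families (since irreducible components of Zariski closures and partial derivatives are controlled by polynomial data over the same parameters), and the notion of ``nearly $C$-generated'' is definable once the generation scheme is fixed. The hypothesis on multiplicity, ``$Y_{t_0}\cap C$ has multiplicity greater than $n$ at $x_0$,'' can be phrased first-order via the existence of matching polynomial approximations of order $n$ of the graphs of $Y_{t_0}$ and $C$ at $x_0$ (as in the definition preceding ``infinitely many $n$-slopes''). The conclusion has the shape $\forall B\ni x_0\, \forall U\ni t_0\, \exists t\in T\cap U\,(|Y_t\cap C\cap B|\ge 2)$, which is first-order since ``ball,'' ``$\ge 2$'' and the relevant curves are uniformly definable. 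Consequently, by completeness of $\mathrm{ACVF}_{p,q}$, it suffices to prove the statement inside a complete model $\Kk$.

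\textbf{Step 2 (Reduction to the analytic case).} In $\Kk$, Remark \ref{R: nearly t is analytic} (together with Lemma \ref{L: uniform t-an}) gives that every $Y_t$ is locally $t$-analytic, and $C$ is locally $t$-analytic because it is strongly $t$-analytic. So near $x_0=(a,b)$ we may write $C$ as the graph of $\Fr^{-m}\circ g$ and each $Y_t$ (for $t$ near $t_0$) as the graph of $\Fr^{-n_0}\circ f_t$, with $g$ and the $f_t$ honest analytic functions on a common ball (after shrinking $B$). Picking $N_0\ge \max(m,n_0)$ and replacing $g,f_t$ by their $\Fr^{N_0-m}$- and $\Fr^{N_0-n_0}$-composites, intersection points of $C$ and $Y_t$ in $B$ correspond bijectively (via the bijection $\Fr^{N_0}$ on $K$) to zeros of the analytic function $h_t(x):=\Fr^{N_0-n_0}f_t(x)-\Fr^{N_0-m}g(x)$, and multiplicities of vanishing are preserved under composition with an automorphism acting on coefficients.

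\textbf{Step 3 (Applying the analytic lemma).} By Lemma \ref{L: cont. of coefficients}, the coefficients of the power-series expansion of the $\Fr^{N_0-n_0}f_t$ at $a$ are $N$-continuous at $t_0$ for every $N$; hence the same holds for the family $\{h_t\}$. The multiplicity hypothesis translates to $h_{t_0}$ having a zero of order $>n$ at $a$, while the infinite $n$-slopes hypothesis produces $t$ arbitrarily close to $t_0$ for which the $n$-th polynomial approximations of $\Fr^{N_0-n_0}f_t$ and $\Fr^{N_0-m}g$ disagree, so $h_t$ has a zero of order at most $n$ at $a$. Now Fact \ref{F: Cherry} combined with Lemma \ref{L: Cont KFR} (exactly as in the proof of Lemma \ref{L: multiple zeros}) shows that for all $t$ sufficiently close to $t_0$ the analytic function $h_t$ has at least $n+1$ zeros in the projection of $B$, counted with multiplicity; since for those $t$ with dropped multiplicity at $a$ the excess zeros must lie away from $x_0$, we obtain $|Y_t\cap C\cap B|\ge 2$, which completes the proof.

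\textbf{Main obstacle.} The subtle point, in characteristic $p>0$, is the Frobenius twist: $t$-analyticity is \emph{not} analyticity, and naively invoking the Identity/Weierstrass preparation machinery of Cherry would fail. What makes it work is that the reciprocal powers of $\Fr$ arising in ``$t$-analytic'' representations are locally \emph{uniform} in nearly $C$-generated families (inherited from the uniformity in Lemmas \ref{L: uniform t-an} and \ref{L: cont. of coefficients}), so one can harmonise the exponents of $\Fr$ across the family near $t_0$ and reduce cleanly to counting zeros of an honestly analytic, continuously varying family $\{h_t\}$.
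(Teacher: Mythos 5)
Your proposal follows the same high-level route as the paper: pass to a complete model by completeness of $\mathrm{ACVF}_{p,q}$, reduce to counting zeros of a continuously varying family of analytic functions, and invoke Fact \ref{F: Cherry} together with Lemma \ref{L: Cont KFR} and Lemma \ref{L: cont. of coefficients}. Steps 1 and 3 are fine. The difference is that the paper disposes of the Frobenius twists in one line, whereas you go through a ``harmonisation'' in Step 2 that is both unnecessary and, as literally stated, wrong.

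The unnecessary part: the multiplicity hypothesis already forces $m=n_0=0$. By Definition \ref{D: Analytic curves}, ``$Y_{t_0}\cap C$ has multiplicity $>n$ at $x_0$'' is only meaningful when $C$ and $Y_{t_0}$ are genuinely analytic at $x_0$, with analytic parametrisations whose difference vanishes to order $>n$. Combined with the fact (noted in the paper's proof) that the $n$-slopes hypothesis plus local $t$-analyticity forces $Y_t$, $t$ near $t_0$, to be honestly analytic at $x_0$, the Frobenius exponents collapse to zero and your $h_t$ is just $f_t-g$. So there is nothing to harmonise. The incorrect part: the claim ``multiplicities of vanishing are preserved under composition with an automorphism acting on coefficients'' conflates precomposing coefficients with $\Fr$ (which would preserve multiplicities) with applying $\Fr^N$ to the function. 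In characteristic $p$, $\bigl(\sum_k c_k z^k\bigr)^{p^N}=\sum_k c_k^{p^N}z^{kp^N}$, so the order of vanishing at $a$ is multiplied by $p^N$, not preserved; had the Frobenius twists genuinely been nonzero, your translation of ``multiplicity $>n$'' into ``$h_{t_0}$ vanishes to order $>n$'' would be off by a factor of $p^{N_0}$, and the $n$-slopes hypothesis would then fail to supply the needed perturbation. In the actual setting this is moot precisely because the hypotheses force $N_0=0$. So the proof arrives at the correct conclusion, but the right justification is the one the paper gives — the hypotheses rule out the Frobenius twist before any counting begins — not harmonisation.
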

	\begin{proof}
		This is a first order statement, so it is enough to prove it in a complete model. As already noted, $C, D$ are locally $t$-analytic. By Lemma \ref{L: uniform t-an} and Remark \ref{R: nearly t is analytic} all curves in $\{Y_t\}_{t\in T}$ are locally $t$-analytic. The assumption that $\{Y_t\}_{t\in T}$ has infinitely many $n$-slopes at $x_0$ implies, by $t$-analyticity, that $Y_{t_0}$ is, in fact, analytic at $x_0$. By Lemma \ref{L: uniform t-an} again,  there is a ball $B\ni x_0$ and a ball $V\ni t_0$ such that $Y_t\cap B$ is analytic for every $t\in T\cap V$. So we may apply the previous lemma to get the desired conclusion, using Lemma \ref{L: cont. of coefficients}. 
	\end{proof}	

		We can thus conclude: 
\begin{corollary}\label{C: definability of tangency}
	Let $(G,\oplus)$ be an infinite commutative group definable in some $ \CK\models \mathrm{ACVF}$ and locally equivalent to $(K,+)$ or to $(K^*, \cdot)$. Let $C\sub G^2$ be strongly $t$-analytic and $D$ nearly $C$-generated. Let  $x_0\in D$,  and $\CY:=\{Y_t\}_{t\in T}$  a 1-dimensional nearly $C$-generated family of plane curves with infinitely many $n$-slopes at $x_0$. If $t_0\in T$ is such that  $D\cap Y_{t_0}$ has multiplicity greater than $n$ at $x_0$,  then there are $t\in T$ arbitrarily close to $t_0$ such that $|D\cap Y_t|>|D\cap Y_{t_0}|$.   
\end{corollary}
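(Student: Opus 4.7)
The plan is to combine Lemma \ref{L: multiple zeros intersection increases}, which produces extra intersection points inside a prescribed small ball around $x_0$, with Corollary \ref{C: keeping intersections}, which guarantees that intersection points away from $x_0$ persist under small perturbations of $t$. Note that the definition of intersection multiplicity (Definition \ref{D: Analytic curves}) is only meaningful when $|D \cap Y_{t_0}|$ is finite, so the multiplicity hypothesis implicitly provides this.

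First, I would observe that Lemma \ref{L: multiple zeros intersection increases} can be applied with $D$ in place of the ambient curve $C$, even though $D$ is only nearly $C$-generated rather than strongly $t$-analytic. Indeed, the proof of that lemma reduces, in a complete model, to Lemma \ref{L: multiple zeros}, whose hypothesis only requires the fixed curve to be analytic at $x_0$. By Remark \ref{R: nearly t is analytic}, $D$ is locally $t$-analytic, and the existence of a well-defined intersection multiplicity exceeding $n$ at $x_0$ forces $D$ to be analytic (not merely $t$-analytic) at that point. Consequently, for any ball $B \ni x_0$, there exist $t \in T$ arbitrarily close to $t_0$ with $|D \cap Y_t \cap B| \geq 2$.

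Next, enumerate $D \cap Y_{t_0} = \{x_0, x_1, \dots, x_k\}$ and choose pairwise disjoint balls $B_i \ni x_i$ such that $D \cap Y_{t_0} \cap B_i = \{x_i\}$ for $i = 0, \dots, k$. Applying the previous step to $B_0$ produces $t$ arbitrarily close to $t_0$ with at least two distinct points of $D \cap Y_t$ inside $B_0$. For each $i \geq 1$, the first lemma of Section \ref{BITIA} (preceding Corollary \ref{C: keeping intersections}) supplies a neighborhood $V_i \ni t_0$ such that $D \cap Y_t \cap B_i \neq \emptyset$ for all $t \in V_i \cap T$. Intersecting these $V_i$ with the set of $t$'s supplied by the first step, we obtain $t$ arbitrarily close to $t_0$ with at least $2 + k$ intersection points of $D$ and $Y_t$, which is strictly greater than $|D \cap Y_{t_0}| = k+1$, as required.

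The main obstacle is the first step: the statement of Lemma \ref{L: multiple zeros intersection increases} uses a single strongly $t$-analytic curve $C$ serving both as the generator of the family and as the curve against which multiplicities are measured, whereas here the multiplicity is taken with a distinct curve $D$. Unpacking the proof confirms that strong $t$-analyticity of $C$ is only used to guarantee that the curves $\{Y_t\}$ are uniformly locally $t$-analytic with continuously varying power-series coefficients, whereas the fixed curve need only be analytic at $x_0$, which the multiplicity hypothesis already guarantees.
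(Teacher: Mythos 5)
Your proof is correct and follows the same route as the paper, which cites Lemma~\ref{L: multiple zeros intersection increases} together with Corollary~\ref{C: keeping intersections} without spelling out the bookkeeping; you have simply filled in the details (tracking the intersection points $x_1,\dots,x_k$ away from $x_0$ via the unnamed persistence lemma, and gaining an extra point near $x_0$). You also correctly identify and resolve a slight imprecision in the statement of Lemma~\ref{L: multiple zeros intersection increases}, where $D$ is introduced but the multiplicity hypothesis and conclusion are written with $C$; as you observe, the proof reduces to Lemma~\ref{L: multiple zeros}, which only needs the fixed curve to be analytic at $x_0$, and the multiplicity hypothesis already forces $D$ to be analytic there.
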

\begin{proof}
	Apply the previous lemma with Corollary \ref{C: keeping intersections}. 
\end{proof}

		\section{First order slopes and a first field configuration}\label{FOSAAFFC}
		Our aim in the present section is to give sufficient conditions for a strongly minimal ACVF-relic, $\CG$, expanding a group $(G,\oplus, e)$ to interpret a copy of $\CK$.   Specifically, we prove: 
		\begin{proposition}\label{proposition field given infinitely slopes}
			Let $\CK\models\mathrm{ACVF}$ be a saturated model. Let $(G,\oplus)$ be a definable group, locally equivalent to $(K,+)$ or to $(K^*, \cdot)$. Let $\CG$ be a strongly minimal reduct of the induced structure on $G$, expanding the given group structure. Assume that there exists a $\CG$-definable strongly $t$-analytic $C\sub G^2$ and a strongly minimal  $C$-generated family $\CY$ of plane curves with infinitely many 1-slopes at a point $x_0\in G^2$. Then $\CG$ interprets a field, $\CK$-definably isomorphic to $(K,+,\cdot)$. 
		\end{proposition}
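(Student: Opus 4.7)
My plan is to follow the classical Marker--Pillay--Rabinovich template (\cite{MaPi,rabinovich}): use $\CY$ to construct $\CG$-definable copies of the field operations of $K$, arrange them into a field configuration, and then identify the resulting interpretable field with $(K,+,\cdot)$.

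First I would normalize. Since $(G,\oplus)$ is $\CG$-definable, translation by $x_0\ominus(e,e)$ is a $\CG$-definable move on plane curves, so we may assume $x_0=(e,e)$. After restricting $S$ to a $\CG$-definable subset of full rank on which $f_s(e)=e$ (possible since, by the definition of infinitely many $1$-slopes, $s\mapsto f_s(e)$ has finite image), every $Y_s$ passes through $(e,e)$ with $1$-slope $\sigma(s):=f_s'(e)$. By hypothesis $\sigma$ is continuous and nowhere locally constant, hence generically finite-to-one, with infinite image $\Sigma\subseteq K$.

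Next I would detect the field operations on $\Sigma$ inside $\CG$. For independent generic $s,t\in S$, both $Y_s\circ Y_t$ and $Y_s\oplus Y_t$ are $\CG$-definable, nearly $C$-generated curves passing through $(e,e)$, with $1$-slopes $\sigma(s)\sigma(t)$ and $\sigma(s)+\sigma(t)$ respectively (by the chain rule, legitimate by $t$-analyticity, cf.\ Lemma \ref{L: cont. of coefficients}). Setting $D:=Y_s\circ Y_t$, the $\CG$-definable function $u\mapsto |Y_u\cap D|$ takes a generic finite value $N$ on a cofinite subset of $S$, and by Corollaries \ref{C: keeping intersections} and \ref{C: definability of tangency} it drops below $N$ exactly at those $u$ for which $Y_u$ acquires a higher-order intersection with $D$. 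A genericity argument, exploiting the strong $t$-analyticity of $C$ (which controls local behaviour at every intersection point), will rule out coincidental tangencies at points of $D$ other than $(e,e)$ for generic $s,t$. The drop set then becomes the fibre $\sigma^{-1}(\sigma(s)\sigma(t))$, yielding a $\CG$-definable finite correspondence realising multiplication; the same argument with $\oplus$ in place of $\circ$ realises addition.

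Finally I would extract the field. Pulled through $\sigma$ (modulo its finite generic fibres), the two correspondences define a $\CG$-interpretable copy of the restriction of $(K,+,\cdot)$ to $\Sigma$, hence a field configuration. Hrushovski's theorem then produces an infinite field $F$ interpretable in $\CG$, and therefore in $\CK$. By the classification of infinite fields interpretable in ACVF via elimination of imaginaries into the geometric sorts (\cite{HrACVFQE}), $F$ is $\CK$-definably isomorphic to either $(K,+,\cdot)$ or the residue field; since the construction is built out of slopes living in $K$, the isomorphism must be with $(K,+,\cdot)$. The main obstacle is the localization in the tangency step: although the intersection counts are $\CG$-definable globally, separating tangency at $(e,e)$ from possibly coincidental tangencies at other intersection points is the delicate point, and it is precisely for this task that the intersection-theoretic machinery of Section \ref{BITIA} and the hypothesis of strong $t$-analyticity of $C$ are tailored.
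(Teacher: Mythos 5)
Your overall strategy matches the paper's: detect the field operations on slopes via tangency (read off by drops in intersection number, justified by Corollaries \ref{C: keeping intersections} and \ref{C: definability of tangency}), assemble a field configuration, and invoke Hrushovski's theorem. But two gaps prevent the proposal from going through as written.

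First, and most seriously, you never normalize the slope set. You only observe that $\Sigma=\sigma(S)$ is infinite; but $\Sigma$ has no a priori closure properties, so for independent generic $s,t$ there is no reason any $u\in S$ satisfies $\sigma(u)=\sigma(s)\sigma(t)$ or $\sigma(u)=\sigma(s)+\sigma(t)$. Your tangency argument then has nothing to detect: the ``drop set'' you want to identify with $\sigma^{-1}(\sigma(s)\sigma(t))$ may simply be empty. The paper handles this by manufacturing \emph{two} families out of $\CY$: composing with a generic $Y^{-1}$ to get $\CY_1$ whose slope set $W_1$ contains a ball around $1$, and subtracting a generic $Y_{s'}$ to get $\CY_0$ whose slope set $W_0$ contains a ball around $0$. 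Balls around $1$ (resp.\ $0$) are locally closed under products (resp.\ sums and affine actions), which is precisely what is needed for the $\mathrm{AGL}_1$-configuration to close up inside $W_1\times W_0$ and $W_0$. This is not an incidental convenience but the structural reason the argument works, and your proposal omits it.

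Second, the passage from ``two correspondences'' to ``a field configuration'' is asserted rather than constructed. A finite correspondence realising $\cdot$ on $\Sigma$ and one realising $+$ on $\Sigma$ is not a field configuration, nor does it obviously yield one without the closure properties above; the paper instead explicitly writes down the six-tuple $\{(x_1,y_1),(x_2,y_2),(x_3,y_3),w_1,w_2,w_3\}$ with $(x_i,y_i)\in W_1\times W_0$ and $w_i\in W_0$, lifts it along the (finite-to-one) slope maps, and then uses tangency three times (for each of $\circ$, the affine action, and the group law) to verify the $\CG$-algebraic dependencies. You should also be aware that strong $t$-analyticity is used not merely to ``rule out coincidental tangencies'' but to ensure the families are nearly $C$-generated so that Corollary \ref{C: definability of tangency} applies at all. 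Finally, your identification of the resulting field with $(K,+,\cdot)$ is under-justified: ``the slopes live in $K$'' does not distinguish $K$ from $\mathbf{k}$. The paper uses the classification of interpretable fields in ACVF (\cite[Proposition 6.23]{HruRid}) together with the Gagelman dimension argument (the interpreted field is in finite-to-finite correspondence with $G$, hence has dimension $1$, while $\dim(\mathbf{k})=0$).
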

		
		We note that the assumption that $\CY$ is strongly minimal (or of Morley Rank 1) is redundant, but since it is all we need, we avoid the more general, but slightly more technical, case. We now proceed to the proof of the proposition, to which the rest of this section is dedicated. 
		
		Our aim is to construct a field configuration. Recall: 
		\begin{definition}[Group configuration]\label{def-gconf}
			\label{def:grconf}
			
			Let $\mathcal M$ be strongly minimal.
			The set $\{a,b,c,x,y,z\}$ of elements in $\CM^{eq}$ is a
			group configuration if there exists an integer $n$ such that
				\begin{itemize}
					\item All elements of the diagram are pairwise independent and
					$\dim(a,b,c,x,y,z)=2n+1$;
					\item $\dim a = \dim b = \dim c=n$,
					$\dim x = \dim y = \dim z = 1$;
					\item The following dependencies hold and no other: $\dim(a,b,c)=2n$, 
					$\dim(a,x,y)=\dim(b,z,y)=\dim(c,x,z)=n+1$.
				\end{itemize}
			A group configuration $\{a,b,c,x,y,z\}$ is \emph{minimal} if given $a'\in \acl(a)$, $b'\in \acl(b)$ and $c'\in \acl(c)$ such that $\{a',b',c',x,y,z\}$ is a group configuration then $a\in \acl(a')$, $b\in \acl(b')$ and $c\in \acl(c')$.
			\end{definition}
			Hrushovski showed that if a strongly minimal set admits a minimal $n$-dimensional group configuration, then there exists an $n$-dimensional group $G$ interpretable in $\CM$ acting transitively on a definable strongly minimal homogeneous space $X$ together with $g_a, g_b\in G$, independent generics such that $a$ is inter-algebraic with $g_a$, $b$ is inter-algebraic with $g_b$ and $c$ is inter-algebraic with $g_ag_b$. Hrushovski showed, further, that $n=1,2,3$ are the only possibilities, and that if $n=2$ then $G$ is $\mathrm{AGL}_1(F)$ for some interpretable algebraically closed field $F$, acting on $(F,+)$.  A minimal 2-dimensional group configuration is, thus, usually referred to as a \emph{field configuration}. 
			We refer to \cite[\S 4.1]{HS} for references and a more detailed discussion. \\

            We leave it for the reader to verify that if $a,b$ are generic independent in  $\mathrm{AGL}_1(F)$ and $x\in F$ is generic independent over $a,b$ then $\{a,b,ab, x, a\cdot x,  b^{-1}\cdot x\}$ is a field configuration. 
						
			We now turn to the proof of the proposition: 
			
			\begin{proof}
				It follows immediately from Hrushovski's Field Configuration Theorem that if we can find a group configuration in $\CG$ that is inter-algebraic in $\CK$ with a standard group configuration\footnote{By inter-algebraic group configurations, we mean group configurations where the respective elements are pairwise inter-algebraic}, as described in the previous paragraph, of the action of $\mathrm{AGL_1}(\CK)$ on $K$ (also denoted $\mathbb G_a \rtimes \mathbb G_m$), then an infinite algebraically closed field is interpretable in $\CG$. That this field is $\CK$-definably isomorphic to $(K,+,\cdot)$ is immediate from, e.g., \cite[Proposition 6.23]{HruRid} and the fact that $G$ (and thus also the interpreted field) are in finite-to-finite correspondence with a definable subset of $K$. So we aim to construct such a group configuration\footnote{Our strategy (similar to several other field construction proofs) is to use the action of composition and $G$-multiplication  on a local group of slopes of  curves in the family to find a group configuration equivalent to one in $\mathbb G_a \rtimes \mathbb G_m$. For power series agreeing up to order $n>1$, composition acts by addition on the $(n+1)$-st coefficients, so we cannot hope that this strategy could work for $n>1$. As, by the aforementioned theorem of Hrushovski's, any field configuration in a strongly minimal structure is necessarily equivalent to a standard group configuration of  $\mathbb G_a(F) \rtimes \mathbb G_m(F)$ for some interpretable field, $F$,  to go through, our strategy seems to require the assumption of infinitely many 1-slopes}. 
				
				Before starting, let us point out that if $C$ is $\CG$-definable, there is a finite subset $S_C$ ($\CK$-definable over the same parameters as $C$) such that $C\setminus S_C$ is strongly $t$-analytic. Obviously, $C\setminus S_C$ is also $\CG$-definable, although we may need additional parameters.

				Next, if  $\CY:=\{Y_s\}_{s\in S}$ is as in the statement of the proposition, and $Y:=Y_{s_0}$ is a generic curve in the family, then, since $\CY$ has infinitely many first order slopes at some point $a$, the function $f_s$ provided by the definition of infinitely many first order slopes is invertible at $a$ with smooth inverse. For future reference, we also note that, removing finitely many points from $Y$, we may further assume that $Y^{-1}$ is $t$-smooth at every point. Note that, by assumption, $Y$ and $Y^{-1}$ are $t$-smooth at $a$, so $a$ is not one of the points we removed.  Therefore,  $\CY_1:=\CY\circ Y^{-1}=\{Y_2\circ Y^{-1}: Y_2\in \CY\}$ is nearly $C$-generated and has infinitely many first order slopes at the point $(x_a,x_a)$ where $a=(x_a,y_a)$. Moreover, the set of first order slopes of curves in $\CY_1$ at $(x_a,x_a)$ contains a ball around $1$. Translating in $G^2$, we may further assume that $x_a=e$ (the unit of $G$). Thus, we have a definable almost faithful, strongly minimal nearly $C$-generated family of curves $\CY_1$ smooth at $(e, e)$ and with infinitely many first order slopes there. We also note that, if $\{f_s: s\in S_1\}$ is a definable family of functions witnessing the fact that $\CY_1$ has infinitely many first order slopes at $(e, e)$, then the set $W_1:=\{f'_s: s\in S_1\}$  is infinite, and (by the chain rule) has $1$ in its relative interior. 
				
				Note that, by the chain rule, if we denote $M(x,y):=x\ominus y$, then $D_xM(c,c)=-D_y(c,c)$ for all $c\in G$. Indeed, $M(c,c)'=0$ so that $D_xM(c,c)+D_yM(c,c)=0$, with the desired conclusion. Thus, if $f, g$ are  definable differentiable functions (with domain and image contained in $G$), then differentiating $M(f(x), g(x))$ we get that $(M(f,g))'(c)=D_xM(f(c),g(c))f'(c)+D_yM(f(c), g(c))g'(c)$, and if $c=f(c)=g(c)=e$ then, choosing $f,g$ with $v(f'(e)-g'(e))$   large enough, $(M(f,g))'(e)$ is arbitrarily close to $0$ which implies that $0$ is in the closure of the set $\{(M(f,g))'(e): f,g\in \CY_1$\}. By considering $\CY_0:=\CY_1\ominus Y_{s'}$ for some $Y_{s'}\in \CY_1$ generic over all the data, we get a definable almost faithful family of curves, each of which contains the  graph of a definable smooth function around  $(e,e)$ and with infinitely many first order slopes.  Moreover, if $\{g_s\}_{s\in S_0}$ is the family of functions witnessing it, $W_0:=\{g_s'(e): s\in S_0\}$, and $0\in W_0$ is in the relative interior pf $W_0$. 
				
                We can find $x_1,x_2\in W_1$ and $y_1,y_2\in W_0$ independent generics and close enough to $0$ and to $1$, respectively, so that in $\mathrm{AGL}_1(\CK)$ we have $(x_1,y_1)(x_2,y_2)=(x_3,y_3)\in W_1\times W_0$. By definition of $\CY_i$ we can find $s_{x_1}, s_{x_2}\in  S_{1}$ and $t_{y_1},t_{y_2}\in S_0$ such that $f_{s_{x_1}}'(e)=x_1$, $g_{t_{y_1}}'(e)=y_1$,  $f_{s_{x_2}}'(e)=x_2$, and $g_{t_{y_2}}'(e)=y_2$. The choice of $(x_i,y_i)$ assures that there are also $s_{x_3}\in  S_1$, $t_{y_3}\in S_0$  such that $f'_{s_{x_3}}(e)=x_3$ and $g'_{t_{y_3}}(e)=y_3$. 
				
				Since the differential of $A(x,y):=x\oplus y$ at $(0,0)$ is $x+y$, we immediately obtain, using the chain rule, that $x_3=(f_{s_{x_1}}\circ f_{t_{y_1}})'(e)$ and $y_3=((f_{s_{x_2}}\circ g_{t_{y_1}})\oplus f_{t_{y_2}})'(e)$. 
				
				Similarly, for $w_1\in W_0$ generic and independent over all the data, if $w_1$ is close enough to $0$ there are $w_2,w_3\in W_0$ such that $(x_1,y_1)\cdot w_1=w_2$ and $(x_3,y_3)\cdot w_3=w_1$ (here $\cdot$ denotes the action of $\mathrm{AGL}_1(\CK)$ on the additive group of $\CK$). In other words, 
				\[\tag{*}\label{e}
				\{(x_1,y_1), (x_2,y_2), (x_3, y_3), w_1,w_2,w_3\}
				\]
				is a standard group configuration for the action of $\mathrm{AGL_1}(\CK)$ of $\CK$ with $(x_i,y_i)\in W_1\times W_0$ and $w_i\in W_0$.  As before, we can find $t_{w_1}, t_{w_2}$ and $t_{w_3}$, all in $S_0$ such that $g_{t_{w_i}}'(e)=w_i$ for $i=1,2,3$. Thus, 
				\[
				\{(s_{x_1},t_{y_1}), (s_{x_2},t_{y_2}), (s_{x_3}, t_{y_3}), t_{w_1},t_{w_2},t_{w_3}\}
				\]
				is a group configuration equivalent to (\ref{e}) (because $\CY_1$ and $\CY_2$ both have infinitely many 1-slopes each value of derivative at $e$ is obtained finitely many times in each of the families $\{f_s\}_{s\in S_1}$ and $\{g_s\}_{s\in S_0}$). It will suffice to show that this is a group configuration in $\CG$. 
				
				To keep the notation readable,  if $s\in S_1$ we write $Y_s$ for the corresponding curve in $\CY_1$ and for $t\in S_0$ we let $Y_t$ denote the corresponding curve in $\CY_0$.  We have already seen that $f_{s_{x_3}}'(e)=(f_{s_{x_1}}\circ g_{t_{y_1}})'(e)$, that is, $Y_{s_{x_3}}$ and $Y_{s_{x_1}}\circ Y_{t_{y_1}}$ have the same first order slope at $(e,e)$. By the paragraph concluding Section \ref{ss plane curves} the intersection $Y_{s_{x_3}}\cap Y_{s_{x_1}}\circ Y_{t_{y_1}}$ is finite. So their multiplicity of intersection at $(e,e)$ is at least $2$. Since $\CY_0$ is a $\CG$-definable family of a-curves smooth at $(e,e)$ with $Y_{s_{x_3}}$ a generic member of $\CY_0$  and  since $Y_{s_{x_1}}\circ Y_{t_{y_1}}$ is a generic element of the nearly $C$-generated family $\CY_1\circ Y_{t_{y_1}}$, we may apply Corollary \ref{C: definability of tangency} to conclude that there are infinitely many $s\in S_1$ such that $|Y_s\cap Y_{s_{x_1}}\circ Y_{t_{y_1}}|>|Y_{s_{x_3}}\cap Y_{s_{x_1}}\circ Y_{t_{y_1}}|$. Strong minimality of $S_0$ implies that the set of $s$ such that $|Y_s\cap Y_{s_{x_1}}\circ Y_{t_{y_1}}|=|Y_{s_{x_3}}\cap Y_{s_{x_1}}\circ Y_{t_{y_1}}|$ is finite, so $s_{x_3}\in \acl_\CG(s_{x_1}, t_{y_1})$. 
				
				An exactly similar argument will show that $s_{y_3}\in \acl(s_{x_2}, t_{y_1}, t_{y_2})$ proving the required  dependence of the edge $(s_{x_1},s_{y_1}), (s_{x_2},s_{y_2}), (s_{x_3}, s_{y_3})$ of the configuration. 
				
				Again, a similar argument would show the other necessary dependencies in our configuration. Noting, for example, that the curve $Y_{s_{w_2}}$ is generic in $\CY_0$ and tangent to the $C$-generated curve $Y_{s_{x_1}}\circ Y_{s_{w_1}}\oplus Y_{s_{w_2}}$. 
				
				This concludes the proof of the proposition 
			\end{proof}

	\section{Power Series}\label{PS}

We aim to prove Theorem \ref{main} separately for the  additive and the multiplicative cases. In the additive case, we  construct a definable family of curves with infinitely many 1-slopes at $(0,0)$, allowing us to apply Proposition \ref{proposition field given infinitely slopes} to conclude. The present section contains most of the technical power series machinery  we  need  to construct such a family. Throughout this section we are working in a complete algebraically closed valued field $\Kk$ and  put $p=\mathrm{char}(\Kk)$.




\begin{lemma}\label{existence of infinely many N slopes Lemma}
 Let $x_0\in K$ and  $\mathcal F=\{f_s\}_{s\in Q}$ an infinite definable family of functions, each analytic in a ball $B\ni x_0$. Assume that $f_s(x_0)=f_t(x_0)$ for all $s, t \in Q$, and that the family of graphs $\{(x,f_s(x)): s\in S, x\in B\}$ is almost faithful. Then, denoting
 \[
 f_s(x)=x_1+\sum_{n\geq 0}e_n(s)(x-x_0)^n,
 \] 
the power series expansion of $f_s$ at $x_0$, there is some $N$ such that the set $\{e_N(s)\mid s\in Q\}$ is infinite.
    \end{lemma}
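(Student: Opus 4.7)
The plan is a contradiction argument pairing a pigeonhole count of $N$-jets against a uniform B\'ezout-type bound on the order of vanishing of a nonzero difference $f_s-f_t$ at $x_0$.

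Suppose, toward a contradiction, that each $\{e_N(s):s\in Q\}$ is finite, say of size $k_N$. Then the $N$-th jet $j^N(s)(x):=x_1+\sum_{n=1}^N e_n(s)(x-x_0)^n$ takes at most $k_1\cdots k_N$ distinct values as $s$ ranges over $Q$. Almost faithfulness of the family, together with Fact \ref{identityTheoremf} (the identity theorem in the complete model $\Kk$), forces $s\mapsto f_s$ to have finite fibres, so the image $\{f_s:s\in Q\}$ is infinite. Pigeonhole therefore produces, for every $N\ge 1$, elements $s,t\in Q$ with $f_s\ne f_t$ but $j^N(s)=j^N(t)$; equivalently, $f_s-f_t$ is a nonzero analytic function on $B$ vanishing at $x_0$ to order at least $N+1$.

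Next I would bound such orders of vanishing uniformly in $(s,t)$. The family $\{f_s-f_t:(s,t)\in Q^2\}$ is uniformly definable, and by Proposition \ref{decompositionProp} the graph of each nonzero member is contained in an algebraic curve in $\Kk^2$. Proceeding as in the ``moreover'' clause of Lemma \ref{L: t-an}, the degree of the minimal polynomial defining this Zariski closure is a definable $\mathbb N$-valued function of $(s,t)\in Q^2$, and uniform finiteness in ACVF gives a constant $D'$ bounding it independently of $(s,t)$. Let $C_{s,t}$ denote the irreducible component of the Zariski closure containing the (connected) graph of $f_s-f_t$; then $\deg C_{s,t}\le D'$, and when $f_s\ne f_t$ we have $C_{s,t}\ne\{y=0\}$. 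B\'ezout's theorem in $\mathbb P^2$ then bounds the total intersection multiplicity of $C_{s,t}\cap\{y=0\}$ by $D'$, and in particular the contribution of the analytic branch at $(x_0,0)$, which is precisely the order of vanishing of $f_s-f_t$ at $x_0$, is at most $D'$. Taking any $N\ge D'$ now gives the contradiction: the pigeonhole pair forces an order of vanishing $\ge N+1>D'$, violating this B\'ezout bound.

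The step I expect to require the most care is the uniform degree bound $D'$; Proposition \ref{decompositionProp} only produces, for each individual $f_s-f_t$, a polynomial whose zero set contains the graph, and uniformity across the definable family must be obtained either by direct appeal to uniform finiteness for definable $\mathbb N$-valued functions in the dp-minimal theory ACVF, or by extracting it from the same mechanism that produces the ``moreover'' clause of Lemma \ref{L: t-an}.
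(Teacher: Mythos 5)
Your proof is correct, but it takes a genuinely different route from the paper's. The paper argues via the residue field: after reducing $Q$ to $\mathcal O$, it uses strong minimality of $\mathbf{k}$ to iteratively refine $Q$ into nested sets $S_1\supseteq S_2\supseteq\cdots$ such that $S_n$ lies in a single "same degree-$n$ jet'' class and has co-finite image in $\mathbf{k}$; the type-definable intersection $\bigcap S_n$ then has co-countable image in the (uncountable) residue field, so it is infinite, and all its members give the same analytic function, contradicting almost faithfulness. Your argument instead pairs the same pigeonhole observation against a uniform degree bound and B\'ezout: the order of vanishing of a nonzero $f_s-f_t$ at $x_0$ is the local intersection multiplicity of the branch $y=(f_s-f_t)(x)$ with $\{y=0\}$, which is capped by the degree $D'$ of the ambient algebraic curve, while the pigeonhole pair for $N\ge D'$ forces that order to exceed $D'$. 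Both are correct; the paper's is model-theoretic and leans on the uncountability of the complete model and strong minimality of $\mathbf{k}$, while yours is more algebraic and self-contained once the uniform degree bound is in place (which, as you correctly note, is exactly what the ``moreover'' clause of Lemma~\ref{L: t-an} establishes via an implicit compactness/elementarity transfer to a saturated model). One small imprecision: the whole graph of $f_s-f_t$ need not lie in a single irreducible component of its Zariski closure; what you actually need, and what is automatic, is that the analytic germ at $(x_0,0)$ does, so $C_{s,t}$ should be defined as the component containing that germ. Also, since $f_s(x_0)=f_t(x_0)$, a nonzero $f_s-f_t$ is automatically nonconstant, which is what guarantees $C_{s,t}\neq\{y=0\}$; the identity theorem is not actually needed for the finite-fibre claim (that is direct from almost faithfulness), only for ruling out $C_{s,t}=\{y=0\}$.
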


\begin{proof}
    If the statement holds for an infinite subset of $Q$, it holds for $Q$ so we may assume that $Q$ is  one dimensional, and by shrinking and projecting  into $K$, we may assume $Q\sub K$ (see, e.g., \cite[Lemma 6.4]{SimWal}). Shrinking further, and rescaling, we may assume $Q=\mathcal O$.  Assume the claim fails. For all $n$ define an equivalence relation $E_n$ on $Q$ by setting $E_n(s,t)$ is $f_s$ and $f_t$ have the same polynomial approximation of order $n$. Our assumption implies that $E_n$ has finitely many classes for all $n$. 

    Since the residue field, $k$ is strongly minimal with the $\Kk$-induced structure there is, for every $n$, at least one $E_n$-class whose image in $k$ is co-finite. Let $S_1\sub Q$ be such a class. Assume we have defined $S_n$ such that $S_n\sub S_{n-1}$,  $S_n$ is contained in a single $E_n$ class, and the image of $S_n$ in $k$ is co-finite. The $E_{n+1}$-classes cover $S_n$, so the intersection of one of them with $S_n$ has co-finite image in $k$. We let $S_{n+1}$ be the intersection of one such class with $S_n$. 

    Consider the type-definable set $S:=\bigcap S_n$. Its image in $k$ is co-countable, and since $\Kk$ is uncountable it follows that $S$ is infinite. But for any $s,t\in S$ the functions $f_s$ and $f_t$ have the same power series expansion around $x_0$, so by the Uniqueness Theorem the two functions are equal. Since $S$ is infinite, this contradicts our assumption that the family of graphs of the functions $\{f_s\}_{s\in S}$ is almost faithful.     
 \end{proof}

We will mainly use families of functions obtained by translating a given function either by the additive or by the multiplicative group. 

We start with a well known fact, the proof is easy and can be found, e.g., in   \cite[Proposition 1.4.3]{Cher}.

\begin{fact}\label{coefficientsAreAnalyticLemmaFact}
   Let $U\ni 0$ be open and let $f:U\to \Kk$ be an analytic function. For $a\in U$ and $m\in \mathbb N$ let $e_m(a)$ be the coefficient of  $(x-a)^m$ in the power expansion of $f$ at $a$, so
    $$f(x)=\sum_{m\geq 0} e_m(a) (x-a)^m.$$
    Then the function $a\mapsto e_m(a)$ is analytic on $U$.
\end{fact}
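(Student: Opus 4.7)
The plan is to reduce to a local computation around an arbitrary point $a_0\in U$ and expand $f$ in two ways. Fix $a_0\in U$; since $f$ is analytic at $a_0$, there is a ball $B_R(a_0)\sub U$ on which $f$ is given by a convergent power series
\[
f(x)=\sum_{n\geq 0} c_n(x-a_0)^n.
\]
For $a\in B_R(a_0)$ and $x$ in a small enough ball around $a$, write $x-a_0=(x-a)+(a-a_0)$ and apply the binomial theorem to each term:
\[
f(x)=\sum_{n\geq 0} c_n\sum_{k=0}^{n}\binom{n}{k}(a-a_0)^{n-k}(x-a)^k.
\]

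The next step is to rearrange this double sum. In the non-archimedean setting, convergence of $\sum_n c_n(x-a_0)^n$ on $B_R(a_0)$ forces $|c_n|r^n\to 0$ for every $r<R$, and then $|\binom{n}{k}c_n(a-a_0)^{n-k}(x-a)^k|\leq |c_n|\max(|a-a_0|,|x-a|)^n\to 0$, so the double series converges to the same sum under any ordering (this is the standard Cauchy-product style argument in a complete non-archimedean field; see e.g.\ the discussion preceding Fact \ref{identityTheoremf}). Interchanging the order of summation gives
\[
f(x)=\sum_{k\geq 0}\left(\sum_{n\geq k}\binom{n}{k}c_n(a-a_0)^{n-k}\right)(x-a)^k.
\]
By uniqueness of the power series expansion of $f$ at $a$ (Fact \ref{identityTheoremf}), we may identify
\[
e_m(a)=\sum_{n\geq m}\binom{n}{m}c_n(a-a_0)^{n-m}.
\]

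Finally, viewing this as a power series in the variable $a$ centred at $a_0$, the estimate $|\binom{n}{m}c_n|\leq |c_n|$ together with $|c_n|r^n\to 0$ for $r<R$ shows the series converges on $B_R(a_0)$, so $a\mapsto e_m(a)$ is analytic at $a_0$. Since $a_0\in U$ was arbitrary, $e_m$ is analytic on $U$. The only subtle point is the double-sum rearrangement, but this is routine in the non-archimedean setting where absolute convergence is equivalent to the terms tending to zero; no real obstacle arises.
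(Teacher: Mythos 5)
Your proof is correct and follows the same standard route as the cited source (and as a draft proof in the authors' source file): expand $(x-a_0)^n$ by the binomial theorem, rearrange the resulting double sum using the non-archimedean fact that convergence is equivalent to terms tending to zero, and read off $e_m(a)=\sum_{n\geq m}\binom{n}{m}c_n(a-a_0)^{n-m}$ as a convergent power series in $a$. Your version is, if anything, slightly more careful than the draft, since you work locally at an arbitrary $a_0\in U$ rather than tacitly assuming a single global power series expansion on all of $U$.
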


\begin{defi}\label{defN}
Let $$f(x)=\sum_{n\geq 1} b_n x^n$$ be a power series converging in a neighborhood $D(f)$ of $0$. For $a\in D(f)$ define $f_a(x)=f(x+a) -f(a)$. If the power series expansion of $f_a$ around $0$ is $f_a(x)=\sum_{n\geq 1}b_{n,a} x^n$, define
\begin{equation*}
\begin{split}
s_n(f) &:=\{b_{n,a}\mid a\in D(f)\}  \\ 
N_+(f) &:=\min\{n\mid s_n(f)\text{ is infinite}\}.
\end{split}
\end{equation*}
\end{defi}

\begin{fact}\label{N}
Let $f(x)$ be a function analytic at $0$ such that $f(0)=0$ and $f'(0)\neq 0$. Then $f$ is injective in a neighborhood of $0$; let $g(x)$ be its analytic inverse around $0$.
Let $$f(x)=\sum_i b_i x^i$$ and 
$$g(x)=\sum_j c_j x^j$$  be the power expansions of $f$ and $g$, respectively.

Then for each $n$, $c_n$ depends only on $b_1,\ldots,b_n$. Moreover, if $N_+(f)$ is finite, $N_+(g)\geq N_+(f)$.
\end{fact}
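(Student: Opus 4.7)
The plan is to treat the two claims in turn, with the first being a standard formal power series computation and the second relying on identifying the local inverses of translates.

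For the first claim I would expand the identity $f(g(x))=x$ as a power series and compare coefficients. The coefficient of $x$ yields $b_1 c_1 = 1$, so $c_1 = 1/b_1$ (well-defined because $b_1 = f'(0)\neq 0$). For $n\ge 2$, the coefficient of $x^n$ in $f\circ g$ has the form $b_1 c_n + P_n(b_2,\ldots,b_n;\,c_1,\ldots,c_{n-1})$, where $P_n$ is a universal polynomial with integer coefficients coming from the multinomial expansion of $(g(x))^k$ for $2\le k\le n$. Setting this equal to $0$ and solving gives $c_n = -P_n(\ldots)/b_1$, and a straightforward induction on $n$ shows that $c_n$ depends only on $b_1,\ldots,b_n$.

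For the second claim set $N:=N_+(f)$. The key observation is that the local analytic inverse of $f_a$ around $0$ is $g_{f(a)}$: indeed
\[
f_a\bigl(g_{f(a)}(y)\bigr) = f\bigl(g(y+f(a))\bigr) - f(a) = y.
\]
Applying the first claim to the pair $(f_a,g_{f(a)})$ therefore tells us that $c_{n,f(a)}$ is a function only of $b_{1,a},\ldots,b_{n,a}$.

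Now fix $k<N$. By definition of $N_+(f)$ the set $s_k(f)=\{b_{k,a}\mid a\in D(f)\}$ is finite, while by Fact \ref{coefficientsAreAnalyticLemmaFact} the map $a\mapsto b_{k,a}$ is analytic on $D(f)$. An analytic function on a ball with finite image is constant by the Identity Theorem (Fact \ref{identityTheoremf}), so $b_{k,a}$ is constant in $a$ for every $k<N$. By the preceding paragraph, $c_{n,f(a)}$ is then constant in $a$ for every $n<N$. Since $f'(0)\neq 0$, $f$ is open near $0$ and $f(D(f))$ contains an open ball; combined with the analyticity of $a'\mapsto c_{n,a'}$ and another application of the Identity Theorem, this forces $c_{n,a'}$ to be constant on all of $D(g)$. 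Hence $s_n(g)$ is a singleton for every $n<N$, giving $N_+(g)\ge N=N_+(f)$. The main technical point — and the place where one has to be careful — is correctly identifying the local inverse of $f_a$ as $g_{f(a)}$ and then invoking the Identity Theorem twice to pass from ``finite-valued'' to ``globally constant'' for the coefficient functions.
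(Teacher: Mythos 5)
Your treatment of the first claim is the same as the paper's: expand $f(g(x))=x$ and solve recursively for $c_n$, obtaining a universal rational expression in $b_1,\dots,b_n$. For the ``moreover'' part the paper's proof stops after the first claim and gives no explicit argument, so you have in fact supplied the missing step, and your argument is sound. The crucial observation is the one you isolate: the universal formula from the first claim applies to each translated pair, since $g_{f(a)}$ is the local inverse of $f_a$, so $c_{n,f(a)}$ is a universal function of $b_{1,a},\dots,b_{n,a}$; combined with the constancy of $b_{k,a}$ for $k<N_+(f)$ (Fact~\ref{coefficientsAreAnalyticLemmaFact} plus the Identity Theorem) and openness of $f$ near $0$, this gives $N_+(g)\ge N_+(f)$. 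Two small remarks. The inference ``analytic with finite image is constant'' is not literally Fact~\ref{identityTheoremf} but follows from it by the usual device (if the image is $\{y_1,\dots,y_m\}$ then $\prod_i(\phi-y_i)\equiv 0$, so it vanishes as a power series, and since $K[[x]]$ is an integral domain one factor vanishes); the paper itself uses this inference implicitly in Lemma~\ref{lemmaN}, so this is consistent. And the final extension step (from $f(D(f))$ to all of $D(g)$ via a second Identity Theorem application) can be avoided entirely by parametrizing by $a'\in D(g)$ from the start and observing that $g_{a'}$ is the inverse of $f_{g(a')}$, so that $c_{n,a'}$ is a universal function of $b_{1,g(a')},\dots,b_{n,g(a')}$, all constant for $n<N_+(f)$; this yields the conclusion on all of $D(g)$ in one pass.
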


\begin{proof}

The formula $$x=f(g(x))=\sum_i b_i\left(\sum_j c_j x^j\right)^i$$ implies that $c_1=1/b_1$ and, by induction, that $c_n$ depends only on $b_1,b_2,\ldots,b_n$. 
\end{proof}

\begin{corollary}
If $N_+(f)$ and $N_+(g)$ are both finite, then $N_+(f)=N_+(g)$.
\end{corollary}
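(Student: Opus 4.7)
The plan is to apply Fact \ref{N} symmetrically to the pairs $(f,g)$ and $(g,f)$. The Fact already yields directly that $N_+(g)\ge N_+(f)$, so it suffices to prove the reverse inequality $N_+(f)\ge N_+(g)$, and this will follow by invoking the same fact with the roles of $f$ and $g$ interchanged.

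To legitimize the swap, I need to verify that $g$ also satisfies the hypotheses required of the ``base'' function in Fact \ref{N}. The function $g$ is analytic at $0$ by construction. Since $f$ is its analytic inverse near $0$ and $f(0)=0$, one has $g(0)=0$. Differentiating the identity $f(g(x))=x$ at $0$ and using the chain rule gives $f'(0)\cdot g'(0)=1$, so $g'(0)=1/f'(0)\ne 0$. Thus $g$ fulfills the hypotheses of Fact \ref{N}, with $f$ playing the role of its analytic inverse.

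Applying Fact \ref{N} to the pair $(g,f)$, and using the assumption that $N_+(g)$ is finite, we obtain $N_+(f)\ge N_+(g)$. Combining with $N_+(g)\ge N_+(f)$ from the direct application of the Fact to $(f,g)$ yields $N_+(f)=N_+(g)$. There is no real obstacle here: the whole content of the corollary sits in the asymmetric inequality already recorded in Fact \ref{N}, and the only observation needed is that the setup is invariant under interchanging $f$ and $g$.
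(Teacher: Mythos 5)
Your proof is correct and takes essentially the same route as the paper: apply Fact~\ref{N} symmetrically to the two pairs and combine the two inequalities. The only cosmetic difference is that the paper introduces an auxiliary inverse $h$ of $g$ and then notes $h=f$ near $0$, whereas you directly identify $f$ as that inverse.
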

\begin{proof}
By Fact \ref{N} one has that $N_+(f)\leq N_+(g)$ apply the same lemma for $g$ and get that $N_+(g)\leq N_+(h)$ where $h$ is the inverse of $g$. Since $h=f$ in some open neighborhood of $0$,  $N_+(f)=N_+(g)$.
\end{proof}

\begin{lemma}\label{lemmaN}  Let 
\[
f=\sum_{n\geq 1}b_n x^n
\] 
be a non-additive analytic function converging in a neighborhood of $0$. 

Then 
\[
E:=\{e\in \mathbb N:\exists n>1 \text{ such that }p\nmid n\wedge  b_{np^e}\neq 0\}
\]
is not empty, and if $l=\min E$, then the coefficient $b_{mp^k,a}$ does not depend on $a$ for all $k<l$ and all $m\geq 1$ such that $p\nmid m$. Moreover $N_+(f)=p^l$. 
\end{lemma}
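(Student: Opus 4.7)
The plan is to work directly from the binomial expansion of $f_a(x)=f(x+a)-f(a)$. For $j\ge 1$ this gives
\[
b_{j,a}=\sum_{n\ge j} b_n\binom{n}{j}a^{n-j},
\]
which, as an analytic function of $a$, is constant precisely when every term with $n>j$ and $b_n\ne 0$ satisfies $\binom{n}{j}=0$ in $\Kk$. In characteristic $p>0$ this reduces to a digit-wise criterion via Lucas's theorem: $\binom{n}{j}\not\equiv 0\pmod{p}$ iff each base-$p$ digit of $j$ is at most the corresponding digit of $n$. Nonemptiness of $E$ follows from non-additivity: if $E$ were empty, then every $n$ with $b_n\neq 0$ would satisfy $n=1\cdot p^e$ for some $e$ (writing $n=mp^e$ with $p\nmid m$ forces $m=1$), so $f(x)=\sum_s b_{p^s}x^{p^s}$ would be additive.

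Next I will verify that $b_{mp^k,a}$ is constant in $a$ whenever $k<l$ and $p\nmid m$. Pick any $n>mp^k$ with $b_n\neq 0$ and write $n=m'p^e$ with $p\nmid m'$. Since $p\nmid m$, the base-$p$ digit of $mp^k$ at position $k$ is the (nonzero) least significant digit of $m$, so I need the digit of $n$ at position $k$ to be zero. If $m'=1$ then $n=p^e$ with $e>k$, and all digits of $n$ below position $e$ vanish. If $m'>1$ then $e\in E$, so by minimality $e\geq l>k$, and again all digits of $n$ below position $e$ vanish. Either way Lucas gives $\binom{n}{mp^k}\equiv 0\pmod p$, so the only surviving term in $b_{mp^k,a}$ is $n=mp^k$, proving constancy.

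To finish I compute $N_+(f)$. Choose $m>1$ with $p\nmid m$ and $b_{mp^l}\ne 0$, supplied by the definition of $l$. The base-$p$ digits of $p^l$ consist of a single $1$ at position $l$, while the digit of $mp^l$ at position $l$ is the nonzero last digit of $m$; Lucas then gives $\binom{mp^l}{p^l}\not\equiv 0\pmod p$, so $b_{p^l,a}$ contains the nonzero term $b_{mp^l}\binom{mp^l}{p^l}a^{(m-1)p^l}$. Hence $b_{p^l,a}$ is a nonconstant analytic function of $a$; by the Identity Theorem (Fact \ref{identityTheoremf}) its image is infinite, so $s_{p^l}(f)$ is infinite and $N_+(f)\le p^l$. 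Conversely, any $j<p^l$ can be written uniquely as $j=mp^k$ with $p\nmid m$, and the inequality $mp^k<p^l$ forces $k<l$; the previous paragraph then shows $s_j(f)$ is a singleton, giving $N_+(f)\ge p^l$. The characteristic zero case is essentially trivial: non-additivity means $f$ is not linear, so $b_{1,a}=f'(a)$ is a nonconstant analytic function, yielding $l=0$ and $N_+(f)=1=p^0$.

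The argument is almost entirely combinatorial once the binomial expansion is written down; the only place requiring care is the bookkeeping in the vanishing claim, where one has to distinguish $m'=1$ from $m'>1$ so that the minimality of $l$ is invoked exactly where it is needed. There is no genuine obstacle beyond organising this digit analysis cleanly.
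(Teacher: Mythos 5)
Your proof is correct, and it lands in essentially the same place as the paper's argument, but by a cleaner and more systematic route. The paper carries out the digit analysis by hand: it tracks, for each $t$, which monomials $x^{mp^k}$ can appear in $(x+a)^t$ by writing $t=up^s$ and using the freshman's dream $(x+a)^{p^s}=x^{p^s}+a^{p^s}$; for the final step ($N_+(f)=p^l$) it exploits the structural observation that $\sum_{i>p^l}b_ix^i$ is a power series in $x^{p^l}$, which makes the nonvanishing of the $a^{(n-1)p^l}$-coefficient in $b_{p^l,a}$ transparent via $\binom{n}{1}=n\not\equiv 0\pmod p$. You instead package the whole digit analysis into a single invocation of Lucas's theorem applied directly to the coefficients $b_n\binom{n}{j}$ appearing in $b_{j,a}=\sum_{n\ge j}b_n\binom{n}{j}a^{n-j}$. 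The two computations agree, of course: your $\binom{mp^l}{p^l}$ is congruent to the paper's $m\pmod p$, as Lucas itself shows. What Lucas buys you is a uniform criterion that handles the constancy claim and the lower bound $N_+(f)\ge p^l$ in the same stroke, at the cost of citing a theorem the paper derives \emph{ad hoc} in the special case it needs. You also make explicit the characteristic-$0$ case (which the paper passes over silently) and the deduction $j<p^l \Rightarrow j=mp^k$ with $k<l$; both are worth keeping. One small point of rigor you handle correctly but should spell out if writing this up: the ``constant iff all higher coefficients vanish'' step is an appeal to the Identity Theorem, which you invoke only at the end but actually use from the first paragraph onward.
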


Recall that $b_{mp^k,a}$ is the coefficient of $x^{mp^k}$ in the expansion of $f_a(x)=f(x+a) - f(a)$ around $0$. 
\begin{proof}
If $E=\0$ 
then the power series expansion of $f$ consists only of powers of $p$,  implying  that $f(x+a)=f(x)+f(a)$, contradicting our assumption that $f$ is not additive. Thus, $E\neq \0$.

Denoting $l=\min E$ write 
\[
f(x)=b_1 x +b_p x^p+ b_{p^2} x^{p^2} + \ldots + b_{p^l} x^{p^l} + \sum_{i>p^l} b_i x^i.
\]  

\begin{claim}
 If $k<l$ and $m\geq 1$ with $p\nmid m$, then the coefficient $b_{mp^k,a}$ is constant as $a$ varies.   
\end{claim}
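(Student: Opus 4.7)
My plan is to expand $f_a(x) = f(x+a) - f(a)$ via the binomial theorem and read off the coefficient of $x^{mp^k}$. Explicitly,
\[
f_a(x) \;=\; \sum_{n\geq 1} b_n\bigl((x+a)^n - a^n\bigr) \;=\; \sum_{n\geq 1}\sum_{j=1}^{n} b_n\binom{n}{j}a^{n-j}x^j,
\]
so
\[
b_{mp^k,a} \;=\; \sum_{n\geq mp^k} b_n\binom{n}{mp^k}a^{n-mp^k}.
\]
Setting $a=0$ yields $b_{mp^k,0} = b_{mp^k}$. Hence it suffices to show that for every $n > mp^k$ with $b_n\neq 0$, one has $\binom{n}{mp^k} \equiv 0 \pmod{p}$, so that all terms with positive powers of $a$ vanish in characteristic $p$ (and trivially in characteristic $0$, where $E$ is empty and the statement is vacuous).

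By the definition of $l = \min E$ and the hypothesis that $e \notin E$ for every $e < l$, any index $n$ with $b_n\neq 0$ falls into exactly one of two classes: (i) $n = p^e$ for some $e\geq 0$, or (ii) $n = m'p^{e'}$ with $m'>1$, $p\nmid m'$, and $e' \geq l$. Writing $j := mp^k$, the hypothesis $p\nmid m$ together with $k < l$ tells us that the base-$p$ expansion of $j$ has a nonzero digit at position $k$ (namely the last base-$p$ digit of $m$). I will apply Lucas' theorem at position $k$ in each case: in case (i) with $n>j$ we have $p^e \geq j > p^k$, so $e > k$ and the digit of $p^e$ at position $k$ is $0$; in case (ii), since $k < l \leq e'$, the digit of $m'p^{e'}$ at position $k$ is also $0$. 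Either way, $\binom{n}{j}$ contains the factor $\binom{0}{\text{nonzero}} = 0$, so vanishes in $\mathbb{F}_p$.

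The resulting sum collapses to the single term $n = mp^k$, giving $b_{mp^k,a} = b_{mp^k}$, independent of $a$. The argument is purely combinatorial once the binomial expansion is written out; the only point requiring care is bookkeeping of base-$p$ digits, and there is no substantive obstacle. I expect the same Lucas-style analysis to then yield $N_+(f) = p^l$ by exhibiting, via the term $n = m_0 p^l$ with $m_0 > 1$, $p\nmid m_0$, $b_{m_0 p^l}\neq 0$ (which exists by $l \in E$), a nonzero coefficient of $a^{(m_0-1)p^l}$ in the analytic function $a \mapsto b_{p^l,a}$, forcing it to be nonconstant.
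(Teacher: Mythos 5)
Your proof is correct and takes essentially the same approach as the paper. Both expand $f_a(x)=f(x+a)-f(a)$ and classify the indices $n$ with $b_n\neq 0$ (via $l=\min E$) into powers of $p$ and products $m'p^{e'}$ with $e'\geq l$; the only cosmetic difference is that you cite Lucas' theorem to see $\binom{n}{mp^k}\equiv 0\pmod p$, whereas the paper reads off the same vanishing from the Frobenius identity $(x+a)^{up^s}=(x^{p^s}+a^{p^s})^u$ (which is Lucas in disguise). A tiny slip: when $m=1$ the inequality chain should be $p^e > j \geq p^k$ rather than $p^e\geq j>p^k$, but the conclusion $e>k$ still holds.
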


\begin{claimproof}
 Fix $t\in \mathbb N$. Since $f_a(x)=f(x+a) - f(a)$ and $f(x+a)=\sum_{n\geq 1}b_n (x+a)^n$, we need to analyze 
 each occurrence of the term  $x^{mp^k}$ in $(x+a)^t$. For this, it is of course enough to assume $t\geq mp^k$. 
 Let $t=u p^s$ with $p\nmid u$, so that $(x+a)^t=(x^{p^s} + a^{p^s})^u$.

If $u>1$ then by definition of $l$ we have $b_s=0$ for $s<l$. If $s\ge l$ then, by hypothesis, $l>k$. Since all powers of $x$ in the expansion of $(x+a)^t$ are powers of $x^{p^s}$, no monomial of $x^{mp^k}$ appears with a non-zero coefficient. 

If $u=1$ then $t=p^s$ and $(x+a)^t=x^{p^s} +a^{p^s}$. So the monomial $x^{mp^k}$ appears in the expansion of $(x+a)^t$ only when $k=s$ and $m=1$. 

It follows that the coefficient of $x^{mp^k}$ in $f_a(x)$ is $b_{mp^k}$ which is independent of $a$. 
\end{claimproof}

So in order to prove that $N_+(f)=p^l$, we just need to show that the coefficient $b_{p^l,a}$ is not locally constant as $a$ varies. As 
\[
f(x)=b_1 x + b_p x^p+ b_{p^2} x^{p^2} + \ldots + b_{p^l} x^{p^l} + \sum_{i>p^l} b_i x^i
\] 
and $b_{np^k}=0$ for all $n>0$ and $k<l$ such that $p\nmid n$, the function $\displaystyle\sum_{i>p^l} b_i x^i$ is, in fact, a function in the variable $x^{p^l}$. So 
\[
f(x)=b_1 x +b_p x^p+ \ldots + b_{p^l} x^{p^l} +\sum_{i>1} b_{i p^l}x^{ip^l}
\]
which implies that the coefficient of $x^{p^l}$ in $f_a$ is $$b_{p^l,a}=b_{p^l} + \sum_{i>1} i b_{i p^l} (a^{p^l})^{i-1} $$ and since $b_{np^l}\neq 0$ for some $n>1$ with $p\nmid n$, this is a series in $a$ with some non-zero coefficient. By the Identity Theorem (Fact \ref{identityTheoremf}) any such series cannot be constant as $a$ varies in any open neighborhood of $0$, so it takes infinitely many values in any open neighborhood of $0$. 
\end{proof}

We will also need:

\begin{lemma}\label{compositionMultLemma}
 Let $U\ni 1$ be open and $h:U\to K$ and $g:U\to K$ be analytic functions. 
 
Let
$$h(x)=1+\sum_{n\geq 1} a_n (x-1)^n,$$  
$$g(x)=1+\sum_{n\geq 1} b_n (x-1)^n$$ 
and 
$$(h\circ g)(x)=1+\sum_{n\geq 1} c_n (x-1)^n.$$ 
Then for each $N$, $c_N$ depends only on $a_j$ and $b_j$ for $j\leq N$.

In addition, if $a_1=b_1=1$ and $N\geq 2$ is such that $a_n=b_n=0$ for $1<n<N$,  then $c_N=a_N+b_N$.

\end{lemma}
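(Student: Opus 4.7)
The plan is to reduce everything to a routine power-series computation by the obvious change of variables, setting $y := x-1$ and letting $H(y) := h(1+y)-1 = \sum_{n\geq 1} a_n y^n$, $G(y) := g(1+y)-1 = \sum_{n\geq 1} b_n y^n$. Then $(h\circ g)(x)-1 = H(G(y))$, so the lemma is the standard statement about composition of power series with no constant term, translated around $0$. So the problem is essentially identical in form to Fact \ref{N}; the only new content is the explicit formula $c_N = a_N + b_N$ under the hypothesis that the first non-trivial coefficients agree.

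For part (1), I would expand
\[
H(G(y)) = \sum_{n\geq 1} a_n\Big(\sum_{j\geq 1} b_j y^j\Big)^n
\]
and collect the coefficient of $y^N$. Each monomial contributing to $y^N$ has the form $a_n\, b_{j_1}\cdots b_{j_n}$ with $j_1+\cdots+j_n = N$ and $j_i\geq 1$, so $n\leq N$ and $j_i\leq N$ for all $i$. This immediately yields that $c_N$ depends only on $a_1,\dots,a_N$ and $b_1,\dots,b_N$.

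For part (2), under the hypothesis $a_1=b_1=1$ and $a_n=b_n=0$ for $1<n<N$, we have $G(y) = y + b_N y^N + R(y)$ where $R(y)\in y^{N+1}\Kk[[y]]$. Then
\[
H(G(y)) = G(y) + \sum_{n\geq N} a_n G(y)^n.
\]
The first summand contributes $y + b_N y^N$ to the coefficients through degree $N$. For $n\geq N$, $G(y)^n$ lies in $y^n\Kk[[y]]\subseteq y^N\Kk[[y]]$, and the coefficient of $y^N$ in $G(y)^n$ is $0$ if $n>N$ and $1$ if $n=N$ (since $G(y)^N = (y+\text{higher})^N = y^N + \text{higher}$). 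Hence $c_N = b_N + a_N\cdot 1 = a_N + b_N$, as claimed.

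No real obstacle is expected: the argument is a direct expansion, and both conclusions are visible after the first layer of substitution. The only thing to be slightly careful about is verifying that $G(y)^n$ contributes $y^n$ with coefficient $b_1^n = 1$ at lowest order, which is where the normalisation $b_1=1$ (rather than just $b_1\neq 0$) is used to get the clean identity in (2).
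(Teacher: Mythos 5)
Your proof is correct and uses essentially the same direct power-series expansion as the paper; the change of variables $y=x-1$ is cosmetic, and your grouping $H(G(y))=G(y)+\sum_{n\ge N}a_nG(y)^n$ is a minor, slightly slicker way of observing what the paper records as $f_2=\dots=f_{N-1}=0$ in its Equation \ref{composeCoefEquation}.
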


\begin{proof}
Notice that 
$$h\circ g(x)=1+\sum_{n\geq 1} a_n \left(\sum_{m\geq 1} b_m(x-1)^m\right)^n$$
and the coefficient of $(x-1)^N$ on the right side of this expression is 
\begin{equation}\label{composeCoefEquation}
    c_{N}= a_1 b_{N} + a_2 f_2 + \ldots +a_{N-1}f_{N-1} + a_N b_1^N
\end{equation}
where for $n=2,\ldots, N-1$, 
$$f_n=\sum_{i_1+\ldots+i_n=N} b_{i_1}\cdots b_{i_n},$$ 
which only depends on $b_1,\ldots,b_N$. So $c_N$ only depends on $a_1,\ldots,a_N$ and $b_1,\ldots,b_N$, as required.

If $a_n=b_n=0$ for all $1<n<N$, then $f_2=f_3=\ldots=f_{N-1}=0$ and Equation \ref{composeCoefEquation} implies
$$c_N = a_1 b_N+ a_N b_1^N$$ and 
$$c_N=a_N+b_N$$ if $a_1=b_1=1$.
\end{proof}

\section{The additive case}\label{TAC}

In this section, we prove:
 \begin{proposition}\label{proposition additive case}
  
  Let $\CK\models \mathrm{ACVF}$ and let $\mathcal G:=(G,\oplus, C \dots)$ be a non locally modular strongly minimal $\CK$-relic expanding a group $G$ such that $C\subseteq G\times G$ is a strongly minimal non $\mathcal G$-affine plane curve. 
  
  Assume that $G$ contains a finite index $\CK$-definable subgroup which is $\CK$-definably isomorphic to an infinite subgroup of $(K,+)$.
  
  Then $\mathcal G$ interprets a field $\mathcal K$-definably isomorphic to $(K,+,\cdot)$. 
\end{proposition}

We introduce  a convenient definition:
\begin{definition}
    Let $C\subseteq K\times K$ be a plane curve.  A point $p\in C$  is \emph{$y$-regular in $C$} if there is an irreducible polynomial $F(x,y)$ such that $\frac{\partial F}{\partial y}(p)\neq 0$ and $p$ is an interior point of the intersection of $C$ with the zero set of $F$.
\end{definition}

 Notice that the statement ``$p$ is a $y$-regular point of $C$'' is uniformly $\CK$-definable in definable families. 

Fix $G$ and $\mathcal G$ as in the hypothesis. The assumption that $G$ has a finite index subgroup definably isomorphic to a subgroup of $(K,+)$ implies, in particular, that $G$ is locally equivalent to $(K,+)$, so as explained in Section \ref{ss: the setting} we may apply the results of the previous sections.

Let $\{t_a(C)\mid a\in C\}$ be the family of translates of $C$ by elements of $C$. Since $C$ is strongly minimal and non-affine, this is a strongly minimal almost faithful family of plane curves. 

We start with an easy remark:

\begin{remark}\label{R: 0 is y smooth and C is t-analytic }
By Proposition \ref{decompositionProp} $C$ is locally Zariski closed, so after removing finitely many isolated points we may assume that each $z\in C$ is the interior of the intersection of $C$ with the zero set of one of finitely many irreducible polynomials $F_1,\dots, F_l$ (some $l$). 

As  the zero sets of the $F_i$ have only finitely many singular points, we may take some $d\in C$ regular. So either $\frac{\partial F_1}{\partial y}(d)\neq 0$ or $\frac{\partial F_1}{\partial x}(d)\neq 0$. 

In the former case, replace $C$ by $t_d(C)$, in the latter case,  replace $C$ with $t_d(C)^{-1}$. Either way, we may assume that $(0,0)\in C$ is a $y$-regular point.

 Moreover, as explained in the discussion following Definition \ref{D: strongly t-an}, after removing finitely many points (but not $(0,0)$) from $C$ we may also assume that $C$ and $C^{-1}$ are both strongly $t$-analytic.
\end{remark}

To prove Proposition \ref{proposition additive case}, we will show that for some $c\in C$, denoting 

\begin{equation}\label{definition of Y}
Y_{a,c}=(C-t_a(C))\circ (C-t_c(C))^{-1},
\end{equation}
the family $\mathcal Y=\{Y_{a,c}\}_{a\in C}$ has infinitely many 1-slopes at $(0,0)$. We then show that after removing finitely many points from $C-t_c(C)$ (not including $(0,0)$), $\CY$ is a nearly $C$-generated family, allowing us to conclude using  Proposition \ref{proposition field given infinitely slopes}.  

\medskip

Most of the work requires results from Section \ref{PS}, proved for complete models. So thus prove, first: 

\begin{lemma}\label{infinitely many slopes complete model}
    Let $\Kk$ be complete, $U\ni 0$ open and  $h:U\to G$ a non-additive analytic function with $h(0)=0$.  For $a\in U$ let $h_a(x):=h(a+x)-h(a)$ and let $h_a(x)=\sum_{n\geq 1}e_n(a) x^n$ its power series expansion around $0$. 
    
    Then, there is some $c\in U$ such that $h-h_c$ is invertible, and the family $(h-h_a)\circ (h-h_c)^{-1}$ has infinitely many 1-slopes at $0$ as $a$ varies in $U$.
\end{lemma}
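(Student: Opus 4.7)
The plan is to use Lemma \ref{lemmaN} to pin down the power series structure of $h-h_a$ as $a$ varies, then invert $h-h_c$ via the Inverse Function Theorem (Fact \ref{inversionf}) for a suitably chosen $c$, and finally read off the $1$-slope by the chain rule.  Since $h$ is non-additive analytic, Lemma \ref{lemmaN} gives $N:=N_+(h)=p^l$ for some $l\geq 0$ and shows that the coefficients of $x^n$ in the power series of $h_a$ at $0$ coincide with those of $h$ for all $n<N$.  Consequently, the power series of $h-h_a$ at $0$ vanishes in all degrees below $N$, while the coefficient of $x^N$ equals $b_N-e_N(a)$ and takes infinitely many values as $a$ varies (by definition of $N_+$).

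The main case is $l=0$, which is automatic in characteristic zero (any non-additive analytic $h$ with $h(0)=0$ has non-constant $h'$) and typical in positive characteristic.  Here $N=1$ and $a\mapsto h'(a)$ is a non-constant analytic function on $U$, so by the Identity Theorem (Fact \ref{identityTheoremf}) one can pick $c\in U$ with $h'(c)\neq h'(0)$.  Then $(h-h_c)'(0)=h'(0)-h'(c)\neq 0$, and the Inverse Function Theorem yields a local analytic inverse of $h-h_c$ at $0$, so that $(h-h_a)\circ (h-h_c)^{-1}$ is a genuine analytic composition in a neighbourhood of $0$ for every $a$ close to $0$.  The chain rule then gives the $1$-slope at $0$ as
\[
\frac{(h-h_a)'(0)}{(h-h_c)'(0)}=\frac{h'(0)-h'(a)}{h'(0)-h'(c)},
\]
and this takes infinitely many values as $a$ varies, because $h'$ does (by the Identity Theorem applied to the analytic non-constant function $h'$, whose values on any open neighbourhood of $0$ form an infinite set).

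The remaining case $l\geq 1$ only arises in positive characteristic and is more delicate: here $(h-h_c)'(0)=0$ for every $c$, so $h-h_c$ is not invertible in the strict analytic sense.  The way around this is to use the structural information from Lemma \ref{lemmaN} that $h-h_a$ involves only powers of $x^{p^l}$, writing $h-h_a=f_a\circ \Fr^l$ for an analytic $f_a$ whose linear coefficient is $b_{p^l}-e_{p^l}(a)$ and varies non-trivially with $a$.  One then interprets invertibility of $h-h_c$ in the $t$-analytic sense (inverting $f_c$ analytically and pre-composing with $\Fr^{-l}$), so that the reduced composition $(h-h_a)\circ (h-h_c)^{-1}$ coincides with $f_a\circ f_c^{-1}$, whose $1$-slope at $0$ equals $(b_{p^l}-e_{p^l}(a))/(b_{p^l}-e_{p^l}(c))$ and again takes infinitely many values as $a$ varies.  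This $t$-analytic interpretation, consistent with the framework developed in Section \ref{ss; complete models}, is the main technical subtlety of the argument; once it is accepted, the conclusion is immediate from Lemma \ref{lemmaN}, Fact \ref{coefficientsAreAnalyticLemmaFact} (to ensure $e_{p^l}$ is analytic and hence takes infinitely many values on any open set), and the chain rule.
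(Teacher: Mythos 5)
Your argument is essentially the same as the paper's: use Lemma \ref{lemmaN} to see that $h-h_a$ is a power series in $x^{p^l}$, choose $c$ so that the de-Frobeniused factor $G_c$ (your $f_c$) has nonzero linear coefficient, invert it analytically, and read off the $1$-slope of $G_a\circ G_c^{-1}$ via the chain rule, with Fact \ref{coefficientsAreAnalyticLemmaFact} ensuring it varies non-locally-constantly. The paper treats $l=0$ and $l\geq 1$ uniformly via the substitution $z=\Fr^{-m}(G_c^{-1}(x))$ rather than splitting into cases; also note that your opening summary of Lemma \ref{lemmaN} (``coefficients coincide for all $n<N$'') understates what you actually use later — the vanishing of coefficients in \emph{all} degrees $mp^k$ with $p\nmid m$, $k<l$, which is what forces $h-h_a$ to be a series in $x^{p^l}$.
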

\begin{proof}
It is enough to show:

    \begin{claim}
        After maybe shrinking $U$ there is an element $c\in U$, and a map $H:U\times U\to G$ such that:

\begin{enumerate}
    \item $H(0,s)=0$ for all $s\in U$.

        \item For all $s\in U$ the map $x\to H(x,s)$ is analytic in some open set around $0$ and moreover, for a fixed such $s$
        \[
        H(x,s)=(h-h_s)\circ (h-h_c)^{-1} (x)
        \]
        on their common domains.
     \item The set $$s_1(H):=\left\{\frac{\partial H}{\partial x}(0,s)\mid s\in U\right\}$$ is infinite.
\end{enumerate}
    \end{claim}

\begin{claimproof}
As $h$ is not additive Lemma \ref{lemmaN} implies $N_+(h)=p^m$ for some $m$. By definition, for $n<p^m$ the set $s_n(h)=\{d_n(a)\mid a\in U\}$ (defined on Definition \ref{defN}) is finite. After shrinking $U$, we may assume that 
$d_n(a)=d_n$ for all $a$ in $U$ and $n<p^m$. This implies that all $h-h_a$ has a zero of order at least $p^m$ for all $a\in U$, i.e.,    
$$(h-h_a)(x)=\sum_{i\geq 1}e_{i}(a)x^{ip^m}$$
where $e_{i}(a)=d_{ip^m}-d_{ip^m}(a)$.
By definition of $N_+(f)$, $\{e_{1}(a)\mid a\in U\}$ is infinite, so we can choose $c\in U$ such that $e_1(c)\neq 0$.

For  $a\in U$ let $$G_a(x)=\sum_{i\geq 1}e_{i}(a) x^i$$ so $$(h-h_a)(x)=G_a(x^{p^m}).$$ As $e_{1}(c)\neq 0$ we have $G'_c(0)\neq 0$ and Fact \ref{inversionf} implies that there is an analytic function $G^{-1}_c$ defined in some neighborhood of $0$ such that $G_c(G_c^{-1}(z))=z$ for all $z$ in that neighborhood.

\medskip

        Let $y=G_a(G_c^{-1}(x))$ and $z=\Fr^{-m}(G_c^{-1}(x))$. Then, $$(h-h_c)(z)=G_c(z^{p^m})=G_c(G_c^{-1}(x))=x$$ and  $$(h-h_a)(z)=G_a(z^{p^m})=G_a(G_c^{-1}(x))=y.$$ So $y=(h-h_a)\circ (h-h_c)^{-1} (x)$, so that the graph of $G_a \circ G_c^{-1}$ is contained in the graph of $(h-h_a)\circ (h-h_c)^{-1}$.

\medskip

Now note that  
$$G_c^{-1}(x)=\frac{1}{e_{1}(c)}x + L(x)$$ 
for some analytic function $L$ having a zero of degree at least $2$ at $x=0$, which implies $$G_a\circ G_c^{-1} (x) = \frac{e_{1}(a)}{e_{1}(c)} x + \sum_{i>1} f_{i}(a) x^{i}$$ for some coefficients $f_{i}(a)$. By Fact \ref{coefficientsAreAnalyticLemmaFact}, each $f_{i}(a)$ is a power series in the variable $a$, and the same is true for $e_1(a)$. Therefore, there is a power series $H(x,a)$ such that $H(x,a)=(G_a\circ G_c^{-1})(x)$ for all $a$ in a neighborhood of $0$, so 
$$\frac{\partial H}{\partial x}(0,a)=(G_a\circ G_c^{-1})'(0)=\frac{e_{1}(a)}{e_{1}(c)},$$ 
which takes infinitely many values as $a$ varies. \end{claimproof}

    This completes the proof of Lemma \ref{infinitely many slopes complete model}.
     \end{proof}

In order to use Lemma \ref{infinitely many slopes complete model} we provide a first order statement which can be proved in any model:

\begin{lemma}\label{lemma infinite slopes}
     Let $C\subseteq G\times G$ be a curve such that the family $\{t_a(C)\mid a\in C\}$ is almost faithful and with $(0,0)$ a $y$-regular point of $C$.

Then, there is some $c\in C$ such that, after removing from $C-t_c(C)$  a $c$-definable finite set not containing $(0,0)$, the family $\{Y_{a,c}\}_{a\in C}$ is nearly $C$-generated and has infinitely many $1$-slopes at $(0,0)$.

\end{lemma}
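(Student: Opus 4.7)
The statement is first-order in $C$: the existence of $c\in C$, of a $c$-definable finite set $F\subseteq C-t_c(C)$ with $(0,0)\notin F$, and of a witnessing family for ``infinitely many $1$-slopes at $(0,0)$'' are all expressible uniformly. I would therefore pass to a complete elementary extension $\Kk$ and run the construction there. Since $(0,0)$ is a $y$-regular point of $C$, the Implicit Function Theorem (Fact \ref{implicitFunctionTheoremf}) furnishes an analytic $h\colon B\to K$ with $h(0)=0$ whose graph coincides with $C$ near $(0,0)$. I would then verify that $h$ is non-additive: otherwise, the graph of $h$ near $(0,0)$ would locally coincide with an algebraic subgroup of $G^2$, forcing some irreducible component of the Zariski closure of $C$ to be coset-like and hence $\mathrm{Stab}^*(C)$ to be infinite --- contradicting the almost faithfulness of $\{t_a(C)\}_{a\in C}$.

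Lemma \ref{infinitely many slopes complete model} applied to $h$ then supplies some $c_1\in B$ such that $h-h_{c_1}$ is (locally) invertible and the family $\{(h-h_a)\circ(h-h_{c_1})^{-1}\}_{a\in B}$ has infinitely many $1$-slopes at $0$. I set $c:=(c_1,h(c_1))\in C$. A direct local computation shows that for $a=(a_1,h(a_1))$ close enough to $(0,0)$ on $C$, the curve $C-t_a(C)$ is, near $(0,0)$, the graph of $h-h_{a_1}$, so $Y_{a,c}$ is, near $(0,0)$, the graph of $(h-h_{a_1})\circ(h-h_{c_1})^{-1}$. By Fact \ref{coefficientsAreAnalyticLemmaFact} the $1$-jet at $0$ of this composition varies definably (even analytically) with $a$ and, by Lemma \ref{infinitely many slopes complete model}, takes infinitely many values. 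This yields a definable witnessing family for infinitely many $1$-slopes in $\Kk$, and, being a first-order property, the conclusion transfers back to $\CK$.

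For nearly $C$-generation I take $Z_a:=C-t_a(C)$ and $Z:=(C-t_c(C))^{-1}$, and let $F$ be the $c$-definable finite set of points of $C-t_c(C)$ that fail to be $t$-smooth. Since, by the construction in the proof of Lemma \ref{infinitely many slopes complete model}, $h-h_{c_1}$ has the form $G_{c_1}\circ\Fr^m$ with $G_{c_1}'(0)\neq 0$, the point $(0,0)$ is $t$-smooth in $C-t_c(C)$, so $(0,0)\notin F$. Removing $F$ absorbs the finite-set discrepancy in $Y_{a,c}\subseteq Z_a\circ Z$, and the remaining points of $Z$ are locally the graphs of definable invertible functions by $t$-smoothness together with the Inverse Function Theorem (Fact \ref{inversionf}); since $Z^{-1}=C-t_c(C)$ is $1$-$C$-generated, the third bullet in the definition of nearly $C$-generated is satisfied. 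The main delicate point is the book-keeping between the analytic description available only in $\Kk$ and the first-order witnesses needed in $\CK$, but this is automatic because both target properties are first-order in the parameters $C$ and $c$.
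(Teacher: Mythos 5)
Your proof is correct and takes essentially the same route as the paper's: pass to a complete model, use $y$-regularity and the Implicit Function Theorem to extract the analytic $h$, apply Lemma~\ref{infinitely many slopes complete model} to produce $c$, and identify $Y_{a,c}$ near $(0,0)$ with the graph of $(h-h_a)\circ(h-h_c)^{-1}$. The one place where you diverge is how you feed the non-additivity hypothesis into Lemma~\ref{infinitely many slopes complete model}: the paper invokes Lemma~\ref{existence of infinely many N slopes Lemma} (an $\aleph_1$-saturation/residue-field argument) to conclude that some coefficient $e_N(a)$ varies infinitely, whereas you argue directly that if $h$ were additive then $h_a=h$ for all $a$, so $t_a(C)$ and $C$ would agree on a neighbourhood of $(0,0)$ for every $a\in C$ near $(0,0)$ and hence share a Zariski component, contradicting almost faithfulness of $\{t_a(C)\}_{a\in C}$. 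Both work (and indeed non-additivity alone suffices, since Lemma~\ref{lemmaN} inside Lemma~\ref{infinitely many slopes complete model} then delivers $N_+(h)=p^m$); your version is slightly more elementary in that it bypasses Lemma~\ref{existence of infinely many N slopes Lemma} entirely. One small caution: the detour through $\mathrm{Stab}^*(C)$ is not quite needed and introduces an unnecessary imprecision — that notion is set up for strongly minimal curves, and an infinite stabilizer in $G^2$ does not immediately produce infinitely many $a\in C$ (rather than $a\in G^2$) with $|t_a(C)\cap C|=\infty$. The cleaner statement, which your argument actually supports, is the local coincidence of $t_a(C)$ with $C$ for $a=(a_1,h(a_1))$, $a_1$ small, directly contradicting almost faithfulness at $s=(0,0)$.
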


\begin{proof}
    Notice that this is a first order statement, so we may prove it in a complete model. 

Let $F(x,y)$ be as provided by the assumption that  $(0,0)$ is $y$-regular in $C$. By Fact \ref{implicitFunctionTheoremf} there is an analytic function $h(x)$ defined in a neighborhood $U\ni 0$ such that $h(0)=0$ and $F(x,h(x))=0$ for all $x\in U$. Shrinking $U$ we may assume that $(x,h(x))\in C\cap (U\times U)$ for all $x\in U$. Moreover, if 
\[h_a(x)=h(x+a)-h(a)=\sum_{n\geq 1}e_n(a)x^n\] we have that $(x,h_a(x))\in t_a(C)$ for all $x\in U$. As the family $\{t_a(C)\mid a\in C\}$ is almost faithful, so is the family of graphs of $(h_a)_{a\in U}$. Therefore,  by Lemma \ref{existence of infinely many N slopes Lemma} there is some $N$ such that $\{d_N(a)\mid a\in U\}$ is infinite. Thus, we may apply Lemma \ref{infinitely many slopes complete model} and find some $c\in U$ such that $\{((h-h_a)\circ (h-h_c)^{-1})'(0)\mid a\in U\}$ is infinite. As $Y_{a,c}$ contains the graph of $(h-h_a)\circ (h-h_c)^{-1}$  it implies that the family $\{Y_{a,c}\}_{a\in C}$ has infinitely many $1$-slopes at $(0,0)$. Moreover, as $C-t_c(C)$ contains the graph of $h-h_c$ near $(0,0)$, we have that after removing from $C-t_c(C)$ the finite set of non $t$-smooth points (all different from $(0,0)$), the family $\CY$ is nearly $C$-generated.
\end{proof}

As a corollary we have Proposition \ref{proposition additive case}:
\begin{proof}(Proof of Proposition \ref{proposition additive case})

Let $\{t_a(C)\mid a\in C\}$ the almost faithful family of $G$-translates of $C$. As was pointed out in the discussion after \ref{D: strongly t-an}, after removing finitely many points of $C$ we have that $C$ and $C^{-1}$ are both strongly $t$-analytic curves. Moreover, by Remark \ref{R: 0 is y smooth and C is t-analytic } after taking some translate by a regular point $d\in C$  and maybe replacing $C$ by $C^{-1}$ we may assume that $(0,0)$ is a $y$-regular point of $C$. Apply Lemma \ref{lemma infinite slopes} and find some $c\in C$ such that -after removing finitely many points from $C-t_c(C)$ but not $(0,0)$-  the family  $\mathcal Y=\{Y_{a,c}\}_{a\in C}$ is nearly $C$ generated and has infinitely many slopes at $(0,0)$ so we may apply Proposition \ref{proposition field given infinitely slopes} and conclude. \end{proof}

\section{The multiplicative case}\label{TMC}

 In this section, we prove:
 
 \begin{proposition}\label{proposition multiplicative case}
  
  Let $\CK\models \mathrm{ACVF}$ and let $\mathcal G:=(G,\oplus, C \dots)$ be a non locally modular strongly minimal $\CK$-relic expanding the group $(G, \oplus)$, and let  $C\subseteq G\times G$ be a $\mathcal G$-definably  strongly minimal non $\mathcal G$-affine set.
  
  Assume furthermore that $(G,\oplus)$ is either a subgroup of the multiplicative group or $G_{m,\lambda}$ for some $\lambda\in K$. 
  
  Then $\mathcal G$ interprets a field which is $\CK$ definably isomorphic to $(K,+,\cdot)$. 
\end{proposition}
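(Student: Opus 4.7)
The overall strategy is to emulate the proof of Proposition \ref{proposition additive case}, working with multiplicative translates of $C$, but to confront the Frobenius obstruction via a dichotomy. Having fixed a strongly $t$-analytic non-affine $\mathcal G$-definable curve $C$ and choosing, after a translation, a $y$-regular smooth point of $C$ to identify with the identity of $G$, we form the family
\[
Y_{a,c} := (C \ominus t_a(C)) \circ (C \ominus t_c(C))^{-1},
\]
exactly as in \eqref{definition of Y}, but now $\ominus$ and $\oplus$ refer to the multiplicative (or twisted multiplicative) operation on $G$. In a complete model $\mathbb K$, represent $C$ near the identity as the graph of an analytic $h$ with $h(1)=1$ and set $h_a(x) := h(a x)/h(a)$. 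The key quantity, by Lemma \ref{lemmaN}, is $N_+(h) = p^{l}$ for some $l \geq 0$.

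Case $l = 0$: The argument of Lemma \ref{infinitely many slopes complete model} goes through unchanged in the multiplicative setting (using that multiplicative translations in $G$ are analytic and Lemma \ref{compositionMultLemma} provides the composition formula), and one obtains a $c \in C$ for which, after removing a finite definable subset from $C \ominus t_c(C)$ not containing the identity, the family $\{Y_{a,c}\}_{a \in C}$ is nearly $C$-generated with infinitely many $1$-slopes at $(1,1)$. Proposition \ref{proposition field given infinitely slopes} then directly yields the interpreted field.

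Case $l \geq 1$: This is the case where Frobenius forces all the natural first-order slope information to collapse, so the strategy of the additive case cannot run directly. Following the idea from \cite{HS} alluded to in the introduction, we exploit the failure to define an auxiliary abelian group operation $\boxplus$ on (a quotient of) $G$. Concretely, Lemma \ref{lemmaN} tells us that the coefficients of $h_a$ up to order $p^l - 1$ are constant as $a$ varies, while the order-$p^l$ coefficient $e_{p^l}(a)$ takes infinitely many values. Lemma \ref{compositionMultLemma} applied to $f_s \circ f_t$ and Lemma \ref{compositionMultLemma} then show that, at order $p^l$, composition acts by addition of these leading coefficients. Using Corollary \ref{C: definability of tangency} to detect higher-order tangency in $\mathcal G$, the relation ``$Y_{s} $ and $Y_{t}$ have the same order-$p^l$ coefficient at $(1,1)$'' is $\mathcal G$-definable on a generic subfamily. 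A standard group-chunk argument — patterned on the construction in Section \ref{FOSAAFFC} but one dimension lower, since only a $1$-dimensional family of leading coefficients is in play — produces a $\mathcal G$-interpretable abelian group $H$ whose generic type is $\mathcal K$-interalgebraic with a generic of $(K,+)$, i.e.\ $H$ is locally equivalent to $(K,+)$. By Proposition \ref{factAlgebraicGrouChunkMontenegro}, Theorem \ref{thmFiniteIndex}, and the results of Section \ref{ss: definable}, $H$ is definable and, up to a finite-index subgroup and a finite quotient, embeds as a subgroup of $(K,+)$. One then transfers the non-local-modularity of $\mathcal G$ and the non-affinity of $C$ into the new group structure (the hypothesis in the introduction that $C$ does not meet any coset of a definable subgroup in an infinite set is used precisely here, to prevent the transferred curve from being affine in $H$), producing a $\mathcal G$-definable strongly minimal non-affine plane curve $C'\subseteq H^2$. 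Proposition \ref{proposition additive case} applied inside $\mathcal G$ with $H$ and $C'$ then supplies the interpreted field.

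The main obstacle is the construction of $\boxplus$ and the verification that the transferred curve $C'$ remains non-affine inside $H^2$. Transferring non-affinity across a change of group operation is delicate: one must show that an infinite intersection of $C'$ with a coset of a subgroup of $H^2$ would, translated back through the order-$p^l$ slope correspondence, force $C$ to meet a coset of a definable subgroup of $G^2$ infinitely often, contradicting the standing hypothesis. Once this is in hand, the classification of $1$-dimensional commutative groups definable in ACVF (Acosta, as used in Theorem \ref{thmFiniteIndex}) together with Proposition \ref{equivalent groups} guarantees that Proposition \ref{proposition additive case} is applicable, closing the multiplicative case.
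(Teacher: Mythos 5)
Your high-level strategy matches the paper's: a dichotomy on whether the (appropriately scaled) first-order slopes vary infinitely, with the degenerate case reduced to the additive case via a group configuration built from the $N$-th coefficients and Lemma~\ref{compositionMultLemma}, then Hrushovski's group configuration theorem, Theorem~\ref{thmFiniteIndex}, and Proposition~\ref{proposition additive case}. However, there are two concrete gaps in the technical core. First, you set $h_a(x):=h(ax)/h(a)$ (multiplicative translates) and then invoke Lemma~\ref{lemmaN} to get $N_+(h)=p^l$ -- but Lemma~\ref{lemmaN} analyzes the \emph{additive} translates $f(x+a)-f(a)$ and says nothing about $h(ax)/h(a)$. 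The correct quantity in the multiplicative setting is the one the paper calls $N_\times(h)$ (introduced and analyzed afresh in Lemma~\ref{Lemma infinite N slopes complete}, whose proof uses the decomposition $h(x)=g(x^{p^r})$ rather than Lemma~\ref{lemmaN}). Second, you take $Y_{a,c}=(C\ominus t_a(C))\circ(C\ominus t_c(C))^{-1}$ with $\ominus$ the multiplicative operation and claim ``the argument of Lemma~\ref{infinitely many slopes complete model} goes through unchanged.'' It does not: the argument there hinges on the fact that the Taylor coefficients of $h-h_a$ are the differences $b_n-b_{n,a}$, so the vanishing $b_n=b_{n,a}$ for $n<p^m$ forces $h-h_a$ to be a series in $x^{p^m}$. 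The Taylor coefficients of the quotient $h/h_a$ are Cauchy products with the inverse series of $h_a$, and the corresponding vanishing structure does not transfer. The paper sidesteps this by using $Y_{a,c}=t_a(C)\circ t_c(C)^{-1}$ (no $\ominus$ at all; see Equation~\ref{defXmult}): when the 1-slopes $h_a'(1)$ already vary one cites Proposition~\ref{proposition field given infinitely slopes} directly, and when they do not one works with the $N_\times$-coefficients of $h_a\circ h_c^{-1}$ and never forms a quotient of power series.

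Your stated ``main obstacle'' -- transferring the non-affinity of $C$ to the new group $H$ -- is a red herring. Once Hrushovski's theorem produces a $\CG$-interpretable strongly minimal group locally equivalent to $\mathbb G_a$, the $\CG$-induced structure on it is non-locally modular (this transfers because the new group is non-orthogonal to the home sort in a strongly minimal structure), and by the equivalence recalled in Section~\ref{ss plane curves} (from \cite[Theorem 7.2]{castleHasson}) non-local modularity already supplies \emph{some} non-affine strongly minimal plane curve in $H^2$. There is no need to push $C$ itself through the slope correspondence, and the extra hypothesis you invoke (``$C$ does not meet any coset of any definable subgroup in an infinite set'') is not part of the proposition; the non-$\CG$-affinity stated there (finite stabilizer of the generic type) suffices.
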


Fix $G$ and $\mathcal G$ as in the statement. 

As in the additive case, since $C$ is locally Zariski closed, after possibly taking a translate and replacing $C$ by $C^{-1}$ we may assume that $(1,1)$ is a $y$-smooth point of $C$. Moreover, after removing finitely many points, none of them $(1,1)$, we may assume that $C$ and $C^{-1}$ are both strongly $t$-analytic.

The proof splits into  two cases. If the family $\{t_a(C):a\in C\}$ of translates of $C$ has infinitely many 1-slopes at $(1,1)$ then we can conclude using Proposition \ref{proposition field given infinitely slopes}. 

Most of this section is devoted to the situation where we have only finitely many 1-slopes at $(1,1)$, proving that in this case we can then reduce to the additive case: 

 \begin{proposition}\label{exists additive group configuration}
 Assume that $(G,\oplus)$ is locally equivalent to the multiplicative group. Let $C\subseteq G\times G$ be a $\mathcal G$-definable strongly minimal with $(1,1)\in C$ $y$-smooth. Assume moreover that $\{t_a(C):a\in C\}$ is  almost faithful  with finitely many $1$-slopes at $(1,1)$. Then there is a $\CG$-definable group locally equivalent to $\mathbb G_a$. 
 \end{proposition}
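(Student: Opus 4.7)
The plan follows the strategy from \cite{HS}: having only finitely many $1$-slopes forces, by strong minimality, a cofinite subfamily with a common $1$-slope at $(1,1)$; after normalizing this slope to $1$ and iterating coefficient by coefficient, we locate the smallest order $N \ge 2$ at which the curves genuinely differ, and then apply Lemma~\ref{compositionMultLemma}, which says that curve composition acts additively on the $N$-th Taylor coefficient. This yields a $\CG$-definable group locally equivalent to $(K,+)$.

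First I would pass to a complete elementarily equivalent model $\Kk$. Since $a \in C$ implies $(1,1) = t_a(a) \in t_a(C)$, every translate in the family passes through $(1,1)$; by $y$-smoothness each $t_a(C)$ is, near $(1,1)$, the graph of a definable analytic function $h_a$ with $h_a(1) = 1$. Strong minimality together with the hypothesis gives a cofinite $S_1 \subseteq C$ on which $h_a'(1)$ takes a constant value $\alpha_0$. Fixing some $c \in S_1$, I would replace the family by the $\CG$-definable family $\{t_a(C) \circ (t_c(C))^{-1}: a \in S_1\}$, whose members are the graphs of $h_a \circ h_c^{-1}$ and in particular have $1$-slope $1$ at $(1,1)$, placing us in the setting of Lemma~\ref{compositionMultLemma}.

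Iterating strong minimality, I would find the smallest $N \ge 2$ for which, on some cofinite $S \subseteq S_1$, all Taylor coefficients of orders $2, \dots, N-1$ at $1$ are constant while the $N$-th coefficient $e_N(a)$ takes infinitely many values; such $N$ exists because almost faithfulness and the Identity Theorem (Fact~\ref{identityTheoremf}) rule out all Taylor coefficients being simultaneously constant on an infinite subfamily. Lemma~\ref{compositionMultLemma} then yields
\[
e_N\bigl(h_a \circ h_b\bigr) \;=\; e_N(h_a) + e_N(h_b) \qquad (a, b \in S).
\]
Therefore $W := \{e_N(h_a): a \in S\}$ is an infinite subset of $K$ closed under addition, and curve composition on $S$ descends, modulo the equivalence ``equal $N$-th Taylor coefficient at $(1,1)$'', to an operation making the quotient into a commutative group which maps definably and finite-to-one onto $W \subseteq (K,+)$. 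This quotient is the sought group, locally equivalent to $\mathbb G_a$.

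The main obstacle is ensuring that the successive subfamilies and the equivalence relation are $\CG$-definable, since neither Taylor coefficients nor the analytic parametrizations are a priori $\CG$-definable. The key tool is Corollary~\ref{C: definability of tangency}: two curves in the family agree up to order $k$ at $(1,1)$ iff they have intersection multiplicity $\ge k+1$ there, which by almost faithfulness and the intersection-counting machinery of Section~\ref{BITIA} corresponds to strictly fewer geometric intersection points in a fixed neighbourhood of $(1,1)$ than the generic count---a $\CG$-definable condition. This makes $S_1$, the iterated subfamilies $S$, and the equivalence ``equal $N$-th coefficient'' all $\CG$-definable, while composition and inversion of plane curves are themselves $\CG$-definable, completing the construction. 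The elementarity transfer between $\Kk$ and $\CK$ is handled by the first-order formulations already set up in Sections~\ref{ss; complete models} and~\ref{BITIA}.
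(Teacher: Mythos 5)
Your setup is sound and you correctly identify the key technical ingredients: passing to a complete model, parametrizing translates by analytic functions $h_a$, normalizing the $1$-slope to $1$ via composition with $h_c^{-1}$, locating the minimal order $N\ge 2$ at which coefficients genuinely vary, and invoking Lemma~\ref{compositionMultLemma} to get additivity of the $N$-th coefficient under composition. These are precisely the ingredients the paper uses (via Lemmas~\ref{Lemma infinite N slopes complete} and~\ref{lemma first order multiplicative}).

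However, the final step --- constructing the group as a direct quotient of the curve family by ``same $N$-th Taylor coefficient'' --- has a genuine gap. Curve composition does \emph{not} preserve the one-parameter family $\{Y_a : a\in S\}$: the composition $Y_a \circ Y_b$ is generically a member of a two-dimensional family, not another $Y_{a'}$ (this is exactly the standing assumption from the end of Section~\ref{ss plane curves} that $\mathcal C\circ \mathcal C$ has strictly higher rank). Consequently your claimed set $W=\{e_N(h_a):a\in S\}$ need not be closed under addition, and curve composition does not descend to a binary operation on $S/\!\sim$ at all: it takes values outside the quotient. So ``the quotient is the sought group'' is not established --- you would need to exhibit a definable set on which the operation is actually internal and satisfies the group axioms, which the construction as written does not do. This is precisely the difficulty that Hrushovski's group configuration theorem is designed to bypass: rather than exhibiting the group operation directly, the paper assembles a six-point group configuration (playing the $N$-th coefficients off the $\mathbb G_a$-group configuration, with each algebraic dependence in $\mathcal G$ certified via Corollary~\ref{C: definability of tangency} and the intersection-count argument), and then obtains the $\mathcal G$-definable group abstractly from Hrushovski's theorem. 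I would recommend you replace your quotient construction with a group configuration argument along those lines.

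A secondary, smaller issue: you assert that ``same $N$-th coefficient'' is a $\CG$-definable equivalence relation via Corollary~\ref{C: definability of tangency}. What the tangency machinery actually delivers, applied with care, is that if two (independently generic, appropriately $C$-generated) curves share the $N$-th coefficient then one index is in $\acl_{\mathcal G}$ of the other plus the base --- which is exactly what a group configuration needs, but is weaker than exhibiting the equivalence relation itself as a first-order $\mathcal G$-formula. The group configuration route sidesteps this as well.
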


 As in the proof of Lemma \ref{infinitely many slopes complete model}, we will prove the following technical result: 

\begin{lemma}\label{Lemma infinite N slopes complete}
Let $\Kk$ be complete, $U\ni 1$ open and $h:U\to G$ a non-constant 
analytic with $h(1)=1$. For $s\in U$ let $h_s(x):=h(sx)/h(s)$ and let $h_s(x)=1+\sum_{n\geq 1}e_n(s) (x-1)^n$ its power series expansion around $1$. Let  
\[
    E_\times (h):=\{n\in \mathbb N: |\{e_n(s):s\in U\}|=\infty\},
\] and assume that $E_\times (h)\neq \emptyset$ and $N_\times(h):=\min E_\times (h)>1$.

Then there is some $c\in U$ be such that $h_c$ is invertible at $1$ and, if for $s\in U$ we let $f_a:=h_s\circ h_c^{-1}$, the function $f_s$ is analytic at $1$ and there is some $N\in \mathbb N$, such that:
    \begin{itemize}
        \item $f_s^{(1)}(1)=1$ for all $s\in U$,
        \item $f_s^{(n)}(1)=0$ for all $s\in U$ for all $1<n<N$, and
        \item $\{f^{(N)}_s(1):s\in U\}$ is infinite,
    \end{itemize} where, abusing notation, we let  $f^{(n)}(1)$ denote the $n$-th coefficient of the power series expansion of $f$ around $1$.
    \end{lemma}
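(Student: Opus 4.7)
The approach mirrors that of Lemma \ref{infinitely many slopes complete model} in the additive case, but is in fact technically simpler: the hypothesis $N_\times(h) > 1$ already pinpoints the smallest order $N$ at which the multiplicative-translate coefficients genuinely vary, and there is no Frobenius/\,$p$-th-power detour to make.

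The plan begins with observing that each coefficient $e_n(s)$ is itself an analytic function of $s$. Expanding $h(sx) = h(s + s(x-1))$ around $s$ and dividing by $h(s)$ (nonzero on a neighborhood of $1$, so shrink $U$ if necessary) yields the explicit formula
\[
e_n(s) \;=\; \frac{s^n\, h^{(n)}(s)}{n!\, h(s)},
\]
which is a ratio of analytic functions with non-vanishing denominator. Since $\{e_n(s) : s \in U\}$ is finite for every $n < N$, one of its fibres is infinite, and Fact \ref{identityTheoremf} forces $e_n(s) \equiv d_n$ to be constant on $U$. In particular $e_1(s) = s h'(s)/h(s) \equiv d_1$; if $d_1 = 0$ then $h' \equiv 0$ on $U$, contradicting the non-constancy of $h$, so $d_1 \neq 0$.

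Next, I will pick any $c \in U$. Since $h_c'(1) = d_1 \neq 0$, Fact \ref{inversionf} furnishes an analytic local inverse $g := h_c^{-1}$ near $1$, with $g(1) = 1$. Set $f_s := h_s \circ g$; after shrinking $U$ so $g$ maps a neighborhood of $1$ into the common domain of the $h_s$, each $f_s$ is analytic near $1$ with $f_s(1) = 1$. Writing $g(x) = 1 + \sum_{n\geq 1} b_n(x-1)^n$ and $f_s(x) = 1 + \sum_{n\geq 1} c_n(s)(x-1)^n$, Lemma \ref{compositionMultLemma} shows that $c_n(s)$ depends only on $e_1(s), \dots, e_n(s)$ and $b_1, \dots, b_n$. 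For every $n < N$, the inputs $e_j(s) = d_j$ are constant in $s$, so $c_n(s)$ is itself constant. Evaluating at $s = c$, where $f_c = h_c \circ h_c^{-1} = \mathrm{id}$, we read off $c_1(s) \equiv 1$ and $c_n(s) \equiv 0$ for $1 < n < N$.

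It remains to handle the coefficient $c_N(s)$. The explicit formula in the proof of Lemma \ref{compositionMultLemma} gives
\[
c_N(s) \;=\; e_1(s)\, b_N \;+\; \sum_{k=2}^{N-1} e_k(s)\, f_k \;+\; e_N(s)\, b_1^{N},
\]
where the $f_k$ depend only on $b_1, \dots, b_N$. The first $N-1$ summands are independent of $s$, while the last is $e_N(s)\, b_1^N$; a direct computation from $g \circ h_c = \mathrm{id}$ gives $b_1 = 1/d_1 \neq 0$, so $c_N(s)$ is an affine function of $e_N(s)$ with non-zero slope. Since $N \in E_\times(h)$ by hypothesis, $\{e_N(s) : s \in U\}$ is infinite, hence so is $\{c_N(s) : s \in U\}$. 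The only place where any care is needed is verifying $d_1 \neq 0$ (so that $h_c$ can be inverted at all), which is what rules out the degenerate possibility that $N_\times(h) > 1$ trivializes $h_s$ at first order; this is exactly where the non-constancy hypothesis on $h$ is used.
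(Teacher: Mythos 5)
Your proof follows the same overall strategy as the paper's (identify the constancy of the low-order coefficients, invert $h_c$, apply Lemma \ref{compositionMultLemma}), but it has a genuine gap in positive characteristic, which is precisely the case the paper is careful to handle.

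The gap is in the step ``$e_1(s) = s h'(s)/h(s) \equiv d_1$; if $d_1 = 0$ then $h' \equiv 0$ on $U$, contradicting the non-constancy of $h$.'' In characteristic $p > 0$, a non-constant analytic function can have identically vanishing derivative; the simplest example is $h(x) = x^p$, and more relevantly any $h$ of the form $h(x) = g(x^{p^r})$ with $r \geq 1$ and $g$ non-constant. Such an $h$ satisfies $h' \equiv 0$, so $e_1(s) \equiv 0$ and $h_c'(1) = 0$ for every $c$, which means no $h_c$ has an \emph{analytic} local inverse and Fact \ref{inversionf} does not apply. Moreover, this situation is not excluded by the hypotheses of the lemma: if $h(x) = g(x^p)$ with $g$ generic, then $h_s(x) = g_{s^p}(x^p)$, the coefficients $e_n(s)$ vanish unless $p \mid n$, and the non-vanishing ones do vary with $s$, so $E_\times(h) \neq \emptyset$ and $N_\times(h) = p^r m > 1$. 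You flag this worry at the end of your argument but then dismiss it incorrectly.

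The paper's proof handles this by explicitly factoring out the maximal power of Frobenius: it writes $h(x) = g(x^{p^r})$ with $r$ maximal, notes that by maximality (together with non-constancy) $g$ is \emph{not} a function of $x^p$, so $g'$ is not identically zero by the identity theorem, and then chooses $c$ so that the corresponding $g'(c^{p^r}) \neq 0$. This makes $h_c^{-1} = \Fr^{-r} \circ f$ with $f$ the analytic inverse of a translate of $g$, and the whole computation is then carried out at the level of $g$ rather than $h$. Your argument is correct as stated in characteristic $0$ (where $r = 0$ is automatic and $h' \equiv 0$ does force $h$ constant), but it needs this Frobenius detour to go through in general, and extending the additive result of \cite{KR} to positive characteristic is one of the stated goals of the paper.
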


\begin{proof}
    It will suffice to show:

     \begin{claim}
        There are $c$ arbitrarily close to $1$, an open $V\ni 1$, a function \[H(x,s)=1+\sum_{n\geq 1}d_n(H,s)(x-1)^n\] and $N\in \mathbb N$ such that $h_c$ is invertible at $1$ and:
        \begin{enumerate}
            \item The map $x\mapsto H(x,s)$ is analytic for each $s\in V$.
             \item  $H(x,s)=h_s\circ h_c^{-1}(x)$  for all $s\in V$ and $x$ in their common domain.
            \item $d_1(H,s)=1$ and $d_n(H,s)=0$ for all $s\in V$ and all $1<n<N$.
            \item $\{d_N(H,s): s\in V\}$ is infinite.

        \end{enumerate}
            
        \end{claim}

        \begin{claimproof} For $s\in U$ define $H_0(x,s):=h_s(x)$. Note that by continuity of multiplication, there is $V\ni 1$ open such that $V\cdot V\subseteq U$ so $h_s(x)=h(sx)/h(s)$ converges in $V$ for all $s\in V$.
 
      Write the expansion of $h_s(x)$ around $1$ as: 
    
    $$h_s(x)=1+\sum_{n\geq 1} d_n(H_0,s) (x-1)^n.$$

By construction, $N_\times(h)$ is  the minimal natural number $n$ such that $d_{n}(H_0,s)$ is infinite as $s$ varies in $V$. 



    Shrinking $V$ we may assume that for $n<N_\times(h)$, $d_n(H_0,s)$ is constant as $s$ varies in $V\setminus\{1\}$.     
    
    Let $r$ be the maximal natural number ($r$ may be $0$) such that there exists some function $g(x)$ analytic in some neighborhood of $1$ with $$h(x)=g(x^{p^r}).$$  
    Shrinking $V$ we assume that $g$ is analytic in $V$.    
    By maximality of $r$, and the fact that $h$ is non-constant, $g$ is not a function in the  $x^p$ which implies by the Identity Theorem (Fact \ref{identityTheoremf}) that  $g'(x)$ is not locally zero. So $g'(x)=0$ 
    for only finitely many points in $V$. Thus, we can find $c_0\in V$ arbitrarily close to $1$ such that both $g'(c_0)\neq 0$ and $d_{N_\times(h)}(H_0,\Fr^{-r}(c_0))\neq 0$; by our choice of $N_\times(h)$, both are open and non-empty conditions, which implies that such a $c_0$ exists. 
Let $c:=\Fr^{-r}(c_0)$.

    As before, let $g_a(x)=g(ax)/g(a)$. $$g'_{c_0}(1)=g'(c_0)\frac{c_0}{g(c_0)}\neq 0,$$ so Fact \ref{implicitFunctionTheoremf} implies that $g_{c_0}$ has a local inverse $f$ converging in an open neighborhood of $1$. Shrinking $V$ we may assume that both, $f$ and $g$, converge in $V$.

    Notice that $$h_c(x)=g((cx)^{p^r})/g(c^{p^r})=g(c_0 x^{p^r})/g(c_0)=g_{c_0}(x^{p^r})$$ and as $f$ is a local inverse for $g_{c_0}$ then $\Fr^{-r}\circ f$ is an inverse for $h_c$ showing that $h_c$ is invertible. 

  We now show that the function
   $$H(x,a):=g_{a^{p^r}} \circ f (x) = g(a^{p^r} f(x))\frac{1}{g(a^{p^r})}$$  satisfies the conclusion of the claim.

   Shrinking $V$ further, we may assume that $x\mapsto H(x,a)$ is analytic in $V$ for all $a\in V$. This gives Clause 1. 

\bigskip

   We proceed to proving Clause 2. Namely, that for $a\in V$ we need to show that  $H(x,a)=h_a\circ h_c^{-1}(x)$ for all $x$ in their common domains.

   Fix $a,x_0\in V$ and let $y_0:=H(x_0,a)$ we prove that $y_0=h_a\circ h_c^{-1}(x_0)$. Let $z:=f(x_0)$ so, by definition of $f$, 
   \[g_{c_0}(z)=g(c_0z)/g(c_0)=x_0\] 
   and \[g_{a^{p^r}}(z)=g(a^{p^r}z)/g(a^{p^r})=g_{a^{p^r}}(z)=g(a^{p^r}f(x))/g(a^{p^r})=H(x_0,a)=y_0.\]
   
   Let $w:=\Fr^{-r}(z)$. Then the above equalities imply
   \[
   h_{c}(w)=h(c w)\frac{1}{h(c)}=g\left((cw)^{p^r}\right)\frac{1}{g(c^{p^r})}=g(c_0z)\frac{1}{g(c_0)}=g_{c_0}(z)=x_0, \text{ and }
   \]
   \[
   h_{a}(w)=h(a w)\frac{1}{h(a)}=g\left((aw)^{p^r} \right)\frac{1}{g(a^{p^r})}=g_{a^{p^r}}(w^{p^r})=g_{a^{p^r}}(z)=y_0,
    \]
   so $ h_c^{-1} (x_0)=w$ and $h_a(w)=y_0$. Thus, $y_0=h_a\circ h_c^{-1}(x_0)$ which completes the proof of Clause 2.

\bigskip

Write $$H(x,s)=1+\sum_{n\geq 1} d_n(H,s) x^n.$$
Now we prove that -- after possibly  shrinking $V$ even more --  $d_1(H,s)=1$ and $d_n(H,s)=0$ for all $s\in V$ and all $1<n<N_\times(h) p^r$.

Write the power series expansion of $g_{a^{p^r}}$ around $1$ as:

By definition of $d_n(H_0,a)$ we have $b_n(a)=d_{np^r}(H_0, a)$ and $b_n(c)=d_{np^r}(H_0,c)$. Moreover, $d_k(H_0,a)=d_k(H_0,c)=0$ for all $k\in \mathbb N$ such that $p^r\nmid k$. As we are assuming that $d_{N_\times(h)}(H_0,c)\neq 0$ it follows that $N_\times(h)$ is a multiple of $p^r$ so $N_\times(h)=N p^r$ for some $N$. We are also assuming that $d_n(H_0,s)$ is constant as $s$ varies on $V\setminus \{1\}$ for $n< N_\times(h)$, so in particular $d_n(H_0,a)=d_n(H_0,c)$ for all $a\in V\setminus \{1\}$ and all $n<N_\times(h)$ 

This implies 
$$h_{a}(x)= 1 + b_1 (x-1)^{p^r} + \ldots  + d_{N_\times(h)}(H_0,a)(x-1)^{N_\times(h)} + \sum_{n > N} d_{np^r}(H_0,a) (x-1)^{np^r}$$
and 
$$h_{c}(x)= 1 + b_1(x-1)^{p^r} + \ldots  +  d_{N_\times(h)}(H_0,a)(x-1)^{N_\times(h)}+ \sum_{n > N} d_{np^r}(H_0,c)(x-1)^{np^r}.$$

Since for $i$ less than $N$ we have
$$b_i:=b_i(a)=d_{ip^r}(H_0,a)=d_{ip^r}(H_0,c)=b_i(c),$$
$$g_{a^{p^r}}(x)=1+b_1(x-1)+\ldots+d_{N_\times(h)}(H_0,a)(x-1)^{N}+\sum_{n>N}d_{np^r}(H_0,a)(x-1)^n$$
and 
$$g_{c_0}(x)=1+b_1(x-1)+\ldots+d_{N_\times(h)}(H_0,c)(x-1)^{N}+\sum_{n>N}d_{np^r}(H_0,c)(x-1)^n.$$

Applying Lemma \ref{N} to $g_{c_0}(x+1)-1$, as $f$ is the inverse of $g_{c_0}$, the coefficient of $(x-1)^n$ in the inverse of $g_{a^{p^r}}$ is the same as the coefficient of $(x-1)^n$ in $f(x)$ for $n < N$. Thus, if the power expansion of $f$ around $1$ is given by:
$$f(x)=1 + c_1 (x-1) +\ldots + c_{N-1}(x-1)^{N-1}+c_{N} (x-1)^{N}+ \sum_{n>N} c_n (x-1)^n,$$ 
the inverse of $g_{a^{p^r}}$ has a power expansion around $1$ of the form
$$g_{a^{p^r}}^{-1}(x)=1 + c_1 (x-1) +\ldots + c_{N-1}(x-1)^{N-1}+ c_{N}(a) (x-1)^{N}+ \sum_{n>N} e_n (x-1)^{n}$$ for some $e_n$.

For $n<N$ the coefficient of $(x-1)^n$ in $f$ is the same as the coefficient of $(x-1)^n$ in $g^{-1}_{a^{p^r}}$, Lemma \ref{compositionMultLemma} implies that for $n<N$ the coefficient of $(x-1)^n$ in 
$$g_{a^{p^r}}\circ f$$ is the same as the coefficient of $(x-1)^n$ in

$$g_{a^{p^r}}\circ g^{-1}_{a^{p^r}}=1+(x-1).$$ 
So
\begin{equation}\label{ln}
    g_{a^{p^r}}\circ f (x) = 1 + (x-1) + d_{N}(H,a)(x-1)^{N}+\sum_{n>N} d_n(H,a)(x-1)^n
\end{equation}
and $d_1(H,a)=1$ and $d_n(H,a)=0$ for all $1<n<N$. This completes Clause 3.

\bigskip

Finally, it follows from Equation \ref{composeCoefEquation} that the coefficient $d_{N}(H,a)$ in the Equation \ref{ln} is given by:

\begin{equation}\label{dNEquation}
    d_{N}(H,a)=b_1 c_{N} + b_2 q_2+\ldots+ b_{N-1} q_{N-1} + d_{N_\times(h)}(H_0,a) c_{1}^{N}
\end{equation}
where for $n=2,\ldots, N-1$

$$q_n=\sum_{i_1+\ldots+i_n=N} b_{i_1}\cdots b_{i_n}.$$

But $q_n$ only depends on $b_1,\ldots,b_{N-1}$ for $n=2,\ldots,N-1$. So the first $N-1$ terms on the right hand side of Equation \ref{dNEquation} only depend on $b_1,\ldots,b_{N-1}$ and $c_1,\ldots, c_N$ all of which are constant as $a$ varies. As $d_{N_\times(h)}(H_0,a)$ takes infinitely many values, so must $d_{N}(H,a)$, and $\{d_N(H,s):s\in V\}$ is infinite. \end{claimproof}

This finishes the proof of Lemma \ref{Lemma infinite N slopes complete}.
\end{proof}

\begin{lemma}\label{lemma first order multiplicative}
      Let  $C\subseteq G\times G$ be a curve such that the family $\{t_a(C):a\in C\}$ is almost faithful. Assume moreover that $(1,1)$ is a $y$-smooth point of $C$ and that the family of translates of $C$ has finitely many $1$-slopes at $(1,1)$. Then there exist an open set $V\ni (1,1)$ and an element $c\in V\cap C$  such that, denoting for $a\in C$   \begin{equation}\label{defXmult}
    Y_{a,c}:=t_a(C)\circ t_c(C)^{-1}, 
\end{equation}
we have:

    \begin{enumerate}
        
        \item The family $(Y_{a,c})_{a\in C}$ is almost faithful.
        \item  
        The family $(Y_{a,c})_{a\in V\cap C}$ has infinitely many $N$-slopes for some $N>0$ and for all $a\in V\cap C$ the $1$-slope of  $Y_{a,c}$ at $(1,1)$ is $1$ and the $n$-slope of $Y_{a,c}$ at $(1,1)$ is $0$ for $1<n<N$.
    \end{enumerate}

    \end{lemma}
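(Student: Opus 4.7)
The statement is first-order, so I work in a complete model $\Kk$. By Definition \ref{D: strongly t-an} and the Implicit Function Theorem (Fact \ref{implicitFunctionTheoremf}) there is an open $U \ni 1$ and an analytic function $h : U \to \Kk$ with $h(1) = 1$ whose graph (possibly after a Frobenius twist in the second coordinate, absorbed as in the proof of Lemma \ref{Lemma infinite N slopes complete}) lies in $C$ near $(1,1)$. I present the argument as if $h$ parameterizes $C$ directly; the Frobenius correction is handled by the internal parameter $r$ of Lemma \ref{Lemma infinite N slopes complete}. For $a = (a_1, h(a_1)) \in C$ with $a_1 \in U$, a direct computation gives that $t_a(C) = \{(x/a_1, y/h(a_1)) : (x,y) \in C\}$ coincides near $(1,1)$ with the graph of $h_{a_1}(x) := h(a_1 x)/h(a_1)$; so $\{t_a(C) : a \in C\}$ is locally parameterized by $\{h_s\}_{s \in U}$.

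The first substantive step is to extract the right index. Almost faithfulness of $\{t_a(C)\}$ transfers to the family of graphs of $\{h_s\}_{s \in U}$, so Lemma \ref{existence of infinely many N slopes Lemma} (with $x_0 = 1$) supplies some coefficient index $n$ with $\{e_n(s) : s \in U\}$ infinite, i.e.\ $E_\times(h) \neq \emptyset$. The hypothesis of finitely many $1$-slopes at $(1,1)$ forces, via analyticity of $s \mapsto h_s'(1)$, this first coefficient to take only finitely many values, so $1 \notin E_\times(h)$ and $N_\times(h) > 1$. I then invoke Lemma \ref{Lemma infinite N slopes complete} to obtain $c_1 \in U$ (arbitrarily close to $1$) such that $h_{c_1}$ is invertible at $1$, together with $N > 1$ such that $f_s := h_s \circ h_{c_1}^{-1}$, for $s$ in some open $V_0 \ni 1$, satisfies $f_s^{(1)}(1) = 1$, $f_s^{(k)}(1) = 0$ for $1 < k < N$, and $\{f_s^{(N)}(1) : s \in V_0\}$ infinite. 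Set $c := (c_1, h(c_1))$ and $V := \{(a_1, h(a_1)) : a_1 \in V_0\} \subseteq C$.

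Unwinding composition of relations, near $(1,1)$ the curve $Y_{a,c} = t_a(C) \circ t_c(C)^{-1}$ contains the graph of $f_{a_1} = h_{a_1} \circ h_{c_1}^{-1}$, so clause (2) of the lemma is immediate from the conclusion of Lemma \ref{Lemma infinite N slopes complete}. For clause (1), suppose $|Y_{a,c} \cap Y_{a',c}| = \infty$. Each $Y_{a,c}$ is a $1$-dimensional definable set, so the two curves share an irreducible component; and since $(1,1)$ is $y$-smooth in both, the unique component through $(1,1)$ of each must coincide, containing the local graphs of both $f_{a_1}$ and $f_{a'_1}$. The Identity Theorem (Fact \ref{identityTheoremf}) then forces $f_{a_1} = f_{a'_1}$, hence $h_{a_1} = h_{a'_1}$ after post-composing with $h_{c_1}$. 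Then $t_a(C)$ and $t_{a'}(C)$ agree near $(1,1)$ and thus intersect infinitely, so almost faithfulness of $\{t_a(C)\}$ bounds the admissible $a'$ by a finite set depending on $a$.

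The main anticipated obstacle is the power-series bookkeeping inside Lemma \ref{Lemma infinite N slopes complete}, where the multiplicative twist (Frobenius substructure via the parameter $r$) replaces the cleaner additive $N_+$-analysis; the remaining work is a routine transport between the geometric family of translates and the analytic family of normalized functions, closely parallel to Lemma \ref{lemma infinite slopes}.
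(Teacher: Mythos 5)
Your proof of clause (2) matches the paper's exactly: implicit function theorem to get $h$, Lemma \ref{existence of infinely many N slopes Lemma} to see $E_\times(h)\neq\emptyset$, the finitely-many-1-slopes hypothesis to force $N_\times(h)>1$, Lemma \ref{Lemma infinite N slopes complete} to produce $c$, and the observation that $Y_{a,c}$ contains the graph of $h_a\circ h_c^{-1}$ near $(1,1)$. This is precisely the route taken by the paper, which in fact says nothing at all about clause (1) and treats it as understood.

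Your attempt to actually verify clause (1) is laudable, but the argument you give has a gap. You write that if $|Y_{a,c}\cap Y_{a',c}|=\infty$ the two curves share an irreducible component, and then conclude "since $(1,1)$ is $y$-smooth in both, the unique component through $(1,1)$ of each must coincide." This inference does not hold: an infinite intersection forces the two curves to share \emph{some} irreducible component of their Zariski closures, but there is no reason for that shared component to be the one passing through $(1,1)$. The curves $Y_{a,c}$ and $Y_{a',c}$ are not asserted to be irreducible, and they could, in principle, share a component far from $(1,1)$ while having distinct local graphs $f_{a_1}\neq f_{a_1'}$ there. So the Identity Theorem is not directly applicable as you invoke it. The standard and robust way to get almost faithfulness of $(Y_{a,c})_{a\in C}$ is via canonical bases: $Y_{a,c}\in\dcl(a,c,C)$, and conversely $t_a(C)$ is, up to finite symmetric difference, a union of components of $Y_{a,c}\circ t_c(C)$, so $a\in\acl(\ulcorner Y_{a,c}\urcorner,c,C)$ (here using that $t_c(C)$ is a non-trivial curve, hence finite-to-finite). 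Since $\{t_a(C)\}_{a\in C}$ is almost faithful this makes $a$ and the canonical base of $Y_{a,c}$ inter-algebraic over $c,C$, which is exactly almost faithfulness. A minor further point: the $V$ in the statement should be an open subset of $G\times G$, whereas you define $V$ as a subset of $C$; this is harmless (replace it by $V_0\times B$ for a suitable ball $B$) but worth being precise about.
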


    \begin{proof}
         The statement is elementary, so we may prove it in a complete model.  
        Let $F(x,y)$ be as provided by the fact that $(1,1)$ is a $y$-smooth point of $C$. Then as $F(1,1)=0$ and $\frac{\partial F}{\partial y}(1,1)\neq 0$ we may apply Fact \ref{implicitFunctionTheoremf} and find $U\ni 1$ open and $h:U\to K$ analytic such that $(x,h(x))\in C$ for all $x\in U$. As the family of translates of $C$ is almost faithful so it is the family of graphs of translates of $h$ and then Lemma \ref{existence of infinely many N slopes Lemma} implies that $E_\times (h)$ -- as defined in the statement of Lemma \ref{Lemma infinite N slopes complete} -- is non empty. Moreover, our assumptions imply that $N_\times (h)>1$, so we may apply Lemma \ref{Lemma infinite N slopes complete} and use the fact that $Y_{a,c}$ contains the graph of $h_a\circ h_c^{-1}$ to conclude.
    \end{proof}

We are ready to prove Proposition \ref{exists additive group configuration}. We construct in $\CG$ a group configuration inter-algebraic with a standard group configuration for $(K,+)$. The construction is very similar to the one carried out in Proposition \ref{proposition field given infinitely slopes}. The idea is that if $\CY$ is a ($\CG$-definable) family of plane curves with infinitely many $N$-slopes at $(1,1)$ then, if $N>1$, composition of curves in $\CY$ acts on the $N$-th coefficients of curves in $\CY$ by addition.

\begin{proof}[Proof of Proposition \ref{exists additive group configuration}]
     Let $V$ and $c$ be as provided by Lemma \ref{lemma first order multiplicative} and for $a\in V$ put $Y_a=t_a(C)\circ t_c(C)^{-1}$ and let $\CY=(Y_a)_{a\in C}$.
 
 Let $(f_a)_{a\in V\cap C}$ be a family of functions witnessing that $(Y_a)_{a\in V\cap C}$ has infinitely many $N$ slopes at $(1,1)$ and let $B$ be an open ball contained in the set of $N$-slopes of that family.
 
 Let $t\in B$, then there are generic independent elements $a_1, a_2\in B-t$ such that $a_1+a_2=a_3 \in B-t$.

    We can find $s_{t},s_{a_1}$ and $s_{a_2}\in V\cap C$ such that $f^{(N)}_{s_t}(1)=t$ $f^{(N)}_{s_{a_1}}(1)=a_1+t$ and $f^{(N)}_{s_{a_2}}(1)=a_2+t$. 

    In the same way, we may find $s_{a_3}\in V\cap C$ such that $f^{(N)}_{s_{a_3}}(1)=a_3+t$. By Lemma \ref{compositionMultLemma} it follows that 
   \begin{equation}\label{equation N slopes equals}
   (f_{s_{a_1}}\circ f_{s_{a_2}})^{(N)}(1)=a_1+a_2+t+t=a_3+t+t=(f_{s_{a_3}}\circ f_{s_{t}})^{(N)}(1).\end{equation}

    Moreover, there is $c_1\in B-t$ is generic over all the data such that $a_1+a_2+c_1=c_2$ and $a_2+c_1=c_3$ both belong to $B-t$. So 
    \begin{equation}\tag{**}\label{group configuration additive}
        \{a_1,a_2,a_3,c_1,c_2,c_3\}\sub B-t
    \end{equation}
    
    is a standard group configuration for $\mathbb G_a$. Thus for $i=1,2,3$ we may find $s_{c_i}\in V\cap C$ such that $f^{(N)}_{s_{c_i}}(1)=c_i+t$. 

    Then \[\{s_{a_1},s_{a_2},s_{a_3},s_{c_1},s_{c_2},s_{c_2}\}\] is a group configuration over $t$ equivalent to (\ref{group configuration additive}). It will suffice to show that it is a group configuration over $s_t$ for $\mathcal G$.

     Notice that the family $\CY\circ Y_{s_t}$ has Morley Rank  $1$ and, by the comment at the end of Section \ref{ss plane curves}, we may assume that $\CY\circ \CY$ has Morley Rank greater than $1$. Therefore, as $Y_{s_{a_1}}\circ Y_{s_{a_2}}$ is a generic element of $\CY\circ \CY$ and $Y_{s_{a_3}}\circ Y_{s_t}$ is generic in $\CY\circ Y_{s_t}$, it follows from Remark \ref{R: finite intersection} that $ Y_{s_{a_3}}\circ Y_{s_t} \cap  Y_{s_{a_1}}\circ Y_{s_{a_2}}$ is finite.
     
     In addition, by Equation \ref{equation N slopes equals} \[(f_{s_{a_1}}\circ f_{s_{a_2}})^{(N)}(1)=(f_{s_{a_3}}\circ f_{s_{t}})^{(N)}(1),\] so $Y_{s_{a_1}}\circ Y_{s_{a_2}}$ and $Y_{s_{a_3}}\circ Y_{s_t}$ have the same $N$-order slope at $(1,1)$ and then Corollary \ref{C: definability of tangency} implies that there are infinitely many $s\in V\cap C$ such that $$\mid Y_s\circ Y_{s_t} \cap  Y_{s_{a_1}}\circ Y_{s_{a_2}} \mid > \mid Y_{s_{a_3}}\circ Y_{s_t} \cap  Y_{s_{a_1}}\circ Y_{s_{a_2}} \mid.$$ 
    As $C$ is strongly minimal the set of $s\in S$ such that $$\mid Y_s\circ Y_{s_t} \cap  Y_{s_{a_1}}\circ Y_{s_{a_2}} \mid = \mid Y_{s_{a_3}}\circ Y_{s_t} \cap  Y_{s_{a_1}}\circ Y_{s_{a_2}} \mid$$ is finite so after adding $s_t$ to the parameters $s_{a_3}\in \acl_{\mathcal G}(s_{a_1},s_{a_2})$. The rest of the algebraic relations needed for the group configuration follows by the same argument.
\end{proof}

   Finally, we prooceed to the proof of Proposition \ref{proposition multiplicative case}.

    \begin{proof}[Proof of Proposition \ref{proposition multiplicative case}]

   By the discussion following \ref{D: strongly t-an}, after removing finitely many points of $C$ we may assume that $C$ and $C^{-1}$ are both strongly $t$-analytic. We consider the almost faithful family $\{t_a(C):a\in C\}$.  If the family of translates $\{t_a(C):a\in C\}$ has infinitely many slopes at $(1,1)$ then Proposition \ref{proposition field given infinitely slopes} implies the existence of an interpretable field. 
    
    Otherwise, Proposition \ref{exists additive group configuration} provides a group configuration in $\mathcal G$ inter-algebraic with a standard group configuration for the additive group. By Hrushovski's group configuration there is a  $\mathcal G$-interpretable strongly minimal group $(H,\otimes)$ with a group configuration  inter-algebraic with a group configuration of the additive group of $K$. By Theorem \ref{thmFiniteIndex} there is a  $\mathcal G$-interpretable strongly minimal group $(G,\oplus)$  containsing a subgroup $G_0$ of finite index and a finite subgroup $F$ such that  $G_0/F$ is isomorphic to a subgroup of the additive group. As $G$ is commutative the group $G/F$ is $\mathcal G$ definable and the $\CG$-induced structure  on $(G/F,\oplus)$ is strongly minimal and non locally modular. Thus, replacing $G$ with $G/F$  we may assume that $F$ is trivial.  Notice that then $G$ has a finite index subgroup isomorphic to a subgroup of the additive group and the $\CG$-induced structure on $(G,\oplus)$ is strongly minimal and non locally modular, so by Proposition \ref{proposition additive case} there is a $\mathcal G$-interpretable field which is $\mathcal K$-definably isomorphic to $\mathcal K$.
    \end{proof}

\section{Conclusion}\label{POTM}
	We sum up everything we did to provide a complete proof of the main result of the paper. For the reader's convinence, we restate the theorem: 
 \begin{thm*}
     Let $\CK$ be an algebraically closed valued field, and  $\mathcal G:=(G,\oplus, \dots)$ a non-locally modular strongly minimal $\CK$-group-relic. Assume further that $G$ is locally equivalent to either $(K,+)$ or to $(K^*,\cdot)$.  Then $\mathcal G$ interprets a field, $F$, $\CK$-definably isomorphic to $(K,+,\cdot)$. Furthermore, the $\mathcal G$-induced structure on $F$ is that of a pure algebraically closed field.
\end{thm*}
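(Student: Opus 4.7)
The plan is to combine Theorem~\ref{thmFiniteIndex} with Propositions~\ref{proposition additive case} and~\ref{proposition multiplicative case} to extract the interpretable field, and then to handle the purity statement separately. First I would apply Theorem~\ref{thmFiniteIndex} to $(G,\oplus)$ to obtain a definable finite index subgroup $G_0\le G$ and a finite (hence normal, since $G$ is abelian) subgroup $F\le G_0$ such that $G_0/F$ is definably isomorphic to one of: an infinite subgroup of $(K,+)$, an infinite subgroup of $(K^*,\cdot)$, or $\mathbb G_{m,a}$ for some $a$ of positive valuation. After passing to the quotient $G/F$ and then restricting to $G_0/F$---which remains $\mathcal G$-interpretable and inter-algebraic with $G$---the induced structure is still strongly minimal and non-locally modular, and by non-local modularity still carries a strongly minimal non-$\mathcal G$-affine plane curve $C$.

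With this reduction in place, the three resulting cases feed directly into the results of Sections~\ref{TAC} and~\ref{TMC}: the additive case is handled by Proposition~\ref{proposition additive case}, and the two multiplicative sub-cases (subgroup of $(K^*,\cdot)$ and $\mathbb G_{m,a}$) by Proposition~\ref{proposition multiplicative case}. Each of these delivers a $\mathcal G$-interpretable infinite field $F$. The fact that $F$ is $\mathcal K$-definably isomorphic to $(K,+,\cdot)$ follows at this stage from $F$ being in finite-to-finite correspondence with a definable subset of $K$, combined with standard results on pure definable fields in ACVF, exactly as invoked in the proof of Proposition~\ref{proposition field given infinitely slopes}.

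For the purity assertion, the inclusion of the pure field structure into the $\mathcal G$-induced structure on $F$ is immediate since $+$ and $\cdot$ are $\mathcal G$-definable. For the reverse, let $X\subseteq F^n$ be $\mathcal G$-definable. Then $X$ is $\mathcal K$-definable, so via the $\mathcal K$-isomorphism $F\cong(K,+,\cdot)$ it corresponds to an ACVF-definable subset of $K^n$. The $\mathcal G$-induced structure on $F$ is strongly minimal (being interpretable in $\mathcal G$) and expands the pure algebraically closed field $(F,+,\cdot)$. One then wants to argue that no proper strongly minimal expansion of ACF by relations that are reducts of ACVF and live on a group carrying $(K,+)$ or $(K^*,\cdot)$ at full dp-rank can exist: any such extra relation would have to encode genuine valuation data, but this is incompatible with the induced structure being stable of Morley rank one while $G$ already realizes dp-rank one.

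The main obstacle, to my mind, is the purity claim rather than the field construction, which at this point is essentially bookkeeping built on the preceding sections. The delicate part is to rule out that the $\mathcal G$-induced structure on $F$ could detect some ACVF-definable but not field-definable relation, such as a ball or a translate of $\mathcal O$. A clean argument likely passes through showing that the induced structure on $F$ is stable of Morley rank $1$ expanding an algebraically closed field and concluding, via Zariski-geometry-type tools together with the classification of interpretable one-dimensional groups from Section~\ref{EOTS}, that it must coincide with the pure field structure.
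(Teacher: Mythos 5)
Your overall plan is correct in outline -- reduce via Theorem~\ref{thmFiniteIndex}, then feed into Propositions~\ref{proposition additive case} and~\ref{proposition multiplicative case} -- but there are two genuine gaps.

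First, the reduction. You propose to pass to $G_0/F$ and assert that it ``remains $\mathcal G$-interpretable''. This is not automatic: Theorem~\ref{thmFiniteIndex} produces a $\CK$-\emph{definable} finite-index subgroup $G_0$, and there is no a priori reason for $G_0$ to be $\mathcal G$-definable. Without that, Proposition~\ref{proposition multiplicative case} cannot be applied to $G_0/F$, since its hypothesis is that the \emph{ambient} group $(G,\oplus)$ (the one carrying the $\mathcal G$-relic) is a subgroup of $\mathbb G_m$ or a $\mathbb G_{m,\lambda}$, and all the constructions in that proposition are carried out $\mathcal G$-definably inside that group. The paper avoids this by a different route: first replace $G$ by its $\mathcal G$-connected component $G^0$, so that $G$ has \emph{no} proper $\mathcal G$-definable subgroup of finite index; quotient by the finite $F$; and then observe that in the multiplicative cases the $\mathcal G$-definable endomorphism $\oplus_n$ (with $n=|G/G_0|$) has finite kernel (torsion in $\mathbb G_m$ or $\mathbb G_{m,\lambda}$ is finite) and image a $\mathcal G$-definable finite-index subgroup contained in $G_0$; by $\mathcal G$-connectedness the image is all of $G$, forcing $G_0=G$. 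In the additive case no such argument is available (in characteristic $p$ the map $\oplus_n$ may be trivial), which is exactly why Proposition~\ref{proposition additive case} is stated so as to require only the \emph{existence} of a $\CK$-definable additive $G_0$, not a restriction to it. Your version conflates these two situations, and in the multiplicative case the missing $\oplus_n$ argument is precisely the content you need.

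Second, the purity claim. Your sketch (``any extra relation would have to encode genuine valuation data, which is incompatible with the induced structure being stable of Morley rank one while $G$ already realizes dp-rank one'') is an intuition, not a proof; stability of the induced structure does not by itself rule out extra relations, since e.g.\ an arbitrary generic subset of $K$ would define a stable (indeed trivially pregeometric) expansion. The paper's argument is short and concrete and you should reproduce it: since ACVF is algebraically bounded, model-theoretic $\acl$ in $\CK$ agrees with field-theoretic algebraic closure; the $\mathcal G$-induced structure on $F$ is sandwiched between the pure ACF structure and the full $\CK$-induced structure, and both of these have the same $\acl$, so the induced structure on $F$ is an $\acl$-preserving strongly minimal expansion of an algebraically closed field. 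By~\cite[Theorem 1]{HR2} any such expansion is an expansion by (algebraic) constants, giving purity. This is the mechanism; there is no need for Zariski-geometry-type machinery.

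Finally, your identification of the interpreted field with $K$ (as opposed to the residue field $\mathbf{k}$) should be made explicit, as in the paper: $F$ is in finite-to-finite correspondence with a definable subset of some $K^n$, so $\dim(F)=1$ in Gagelman's dimension, while $\dim(\mathbf{k})=0$; combined with~\cite[Theorem 6.23]{HruRid} this forces $F\cong K$. Your phrase ``standard results on pure definable fields in ACVF'' gestures at this but does not rule out $\mathbf{k}$, which is the only thing the dimension count is there to do.
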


	\begin{proof}
				
    Since $G$ is strongly minimal $G^0$, its stability theoretic connected component, exists and is a definable subgroup. So replacing $G$ with $G^0$ we  may assume that $G$ has no $\CG$-definable subgroups of finite index.  By the reduction of Section \ref{ss: definable}, we may assume that $G$ is definable. 
				
    Since $G$ is strongly minimal, it is abelian. Replacing $G$ with a quotient by a finite subgroup, if needed, we conclude  by Theorem \ref{thmFiniteIndex}, that there exists a $\CK$-definable subgroup $G_0\le G$ of finite index that is  $\CK$-definably isomorphic to either a subgroup of the additive group, or to a subgroup of the multiplicative group or to $\mathbb G_{m,x}$. Consequently, we  may apply the construction of Section \ref{ss: the setting}. In particular $G$ is locally isomorphic to either $(K,+)$ or to $(K^*, \cdot)$. 
				
	In the former case, Proposition \ref{proposition additive case} completes the proof of the theorem.
				
    In the other cases, if $n=|G/G_0|$  consider the map $\oplus_n$ sending $g$ to $$\underbrace{g\oplus g \dots \oplus g}_{n\text{ times}}.$$ Since $G_0$ is isomorphic to a subgroup of the multiplicative group or to $\mathbb G_{m,x}$ it follows that $\oplus_n$ has finite kernel. Since $G$ has no $\CG$-definable subgroups of finite index the image of $\oplus_n$ is the whole of $G$. Since the image of $\oplus_n$ is contained in $G_0$, it follows that $G_0=G$. implying that $G$ is either a subgroup of the multiplicative group or $\mathbb G_{m,x}$. Proposition \ref{proposition multiplicative case} completes the construction of the field in all cases. 
				
	To complete the proof of the theorem, it remains to show that the field $F$ constructed up until now is $\CK$-definably isomorphic to $K$. Since $\CG$ is strongly minimal and $F$ is interpretable in $\CG$ there is a finite-to-finite $\CG$-definable correspondence between $K$ and $G$. As we have shown in Section \ref{ss: definable} since $G$ may be assumed to be definable, there is a finite-to-finite correspondence between $F$ and a definable subset of $K^n$ (some $n$). In particular $\dim(F)=1$ (where $\dim$ is Gagelman's extension of dimension to $\CK^{eq}$, as discussed in Section \ref{ss: definable}). By \cite[Theorem 6.23]{HruRid} $F$ is $\CK$-definably isomorphic to either $K$ or the residue field $\mathbf{k}$. Since $\dim(\mathbf{k})=0$, and dimension is preserved under definable bijections, it must follow that $F$ is definably isomorphic to $K$. 
				
	To show that $F$ is a pure field, we use an argument suggested to us by B. Castle. As already noted, $\CK$ is algebraically bounded (i.e, the model theoretic $\acl$ is the field theoretic algebraic closure). So the model theoretic algebraic closure in $F$ is the field theoretic algebraic closure. By \cite[Theorem 1]{HR2} any strongly minimal $\acl$-preserving expansion of an algebraically closed field, is an expansion by algebraic constants, with the desired conclusion.	\end{proof}
			
    We conclude by recalling that the full trichotomy for strongly minimal sets definable in ACVF is now known (by reducing to the results of the present paper).

			\bibliographystyle{alpha}
			\bibliography{biblio}

\begin{thebibliography}{HHP22b}

\bibitem[Abh01]{LAG}
Shreeram~Shankar Abhyankar.
\newblock {\em Local analytic geometry}, volume~14.
\newblock World Scientific, 2001.

\bibitem[AL21a]{acostaACVF}
Juan~Pablo Acosta~L{\'o}pez.
\newblock One dimensional commutative groups definable in algebraically closed valued fields and in the pseudo-local fields.
\newblock {\em arXiv preprint arXiv:2112.00430}, 2021.

\bibitem[AL21b]{acosta}
Juan~Pablo Acosta~L{\'o}pez.
\newblock One dimensional groups definable in the p-adic numbers.
\newblock {\em The Journal of Symbolic Logic}, 86(2):801--816, 2021.

\bibitem[ALH23]{AcHa}
Juan~Pablo Acosta~L{\'o}pez and Assaf {Hasson}.
\newblock {On groups and fields definable in 1-h-minimal fields}.
\newblock {\em arXiv e-prints}, page arXiv:2303.01127, June 2023.

\bibitem[Ben08]{Bendetto}
Robert~L. Benedetto.
\newblock An {A}hlfors islands theorem for non-{A}rchimedean meromorphic functions.
\newblock {\em Trans. Amer. Math. Soc.}, 360(8):4099--4124, 2008.

\bibitem[Cas23]{CAS}
Benjamin Castle.
\newblock Restricted trichotomy in characteristic zero.
\newblock {\em Journal of the Amterican Mathematical Society}, 2023.
\newblock To appear, available on arXiv preprint arXiv:2209.00730.

\bibitem[CH22]{castleHasson}
Benjamin Castle and Assaf Hasson.
\newblock Very ampleness in strongly minimal sets.
\newblock {\em arXiv preprint arXiv:2212.03774}, 2022.

\bibitem[CH23]{CasHa}
Benjamin {Castle} and Assaf {Hasson}.
\newblock {Very ampleness in strongly minimal sets}.
\newblock {\em {M}odel {T}heory}, 2023.
\newblock To appear. Available at arXiv:2212.03774.

\bibitem[{Che}09]{Cher}
William {Cherry}.
\newblock {Lectures on Non-Archimedean Function Theory}.
\newblock {\em arXiv e-prints}, page arXiv:0909.4509, September 2009.

\bibitem[CS18]{ChSi}
Artem Chernikov and Pierre Simon.
\newblock Definably amenable {NIP} groups.
\newblock {\em J. Amer. Math. Soc.}, 31(3):609--641, 2018.

\bibitem[CW02]{Cher2}
William Cherry and Julie Tzu-Yueh Wang.
\newblock Non-{A}rchimedean analytic maps to algebraic curves.
\newblock In {\em Value distribution theory and complex dynamics ({H}ong {K}ong, 2000)}, volume 303 of {\em Contemp. Math.}, pages 7--35. Amer. Math. Soc., Providence, RI, 2002.

\bibitem[DGL11]{DoGoLi}
Alfred Dolich, John Goodrick, and David Lippel.
\newblock Dp-minimality: basic facts and examples.
\newblock {\em Notre Dame J. Form. Log.}, 52(3):267--288, 2011.

\bibitem[EHP21]{HE}
Pantelis~E. Eleftheriou, Assaf Hasson, and Ya'acov Peterzil.
\newblock Strongly minimal groups in o-minimal structures.
\newblock {\em J. Eur. Math. Soc. (JEMS)}, 23(10):3351--3418, 2021.

\bibitem[Gag05]{Gagelman}
Jerry Gagelman.
\newblock Stability in geometric theories.
\newblock {\em Ann. Pure Appl. Logic}, 132(2-3):313--326, 2005.

\bibitem[HHM06]{HrACVFQE}
Deirdre Haskell, Ehud Hrushovski, and Dugald Macpherson.
\newblock {\em {Definable sets in algebraically closed valued fields: elimination of imaginaries}}, volume 597.
\newblock 2006.

\bibitem[HHM08]{hru-haskell2005stable}
Deirdre Haskell, Ehud Hrushovski, and Dugald Macpherson.
\newblock {\em {Stable domination and independence in algebraically closed valued fields}}, volume~30 of {\em {Lecture Notes in Logic}}.
\newblock Association for Symbolic Logic, Chicago, IL, 2008.

\bibitem[HHP22a]{HaHaPeVF}
Yatir Halevi, Assaf Hasson, and Ya'acov Peterzil.
\newblock Interpretable fields in various valued fields.
\newblock {\em Adv. Math.}, 404:Paper No. 108408, 2022.

\bibitem[HHP22b]{HY}
Yatir Halevi, Assaf Hasson, and Ya'acov Peterzil.
\newblock On groups interpretable in various valued fields.
\newblock {\em arXiv preprint arXiv:2206.05677}, 2022.

\bibitem[Hol95]{HolQE}
Jan~E. Holly.
\newblock Canonical forms for definable subsets of algebraically closed and real closed valued fields.
\newblock {\em J. Symbolic Logic}, 60(3):843--860, 1995.

\bibitem[HP94]{HRPIGroups}
Ehud Hrushovski and Anand Pillay.
\newblock Groups definable in local fields and pseudo-finite fields.
\newblock {\em Israel Journal of Mathematics}, 85(1):203--262, 1994.

\bibitem[HP11]{HrPiNIP}
Ehud Hrushovski and Anand Pillay.
\newblock On {NIP} and invariant measures.
\newblock {\em J. Eur. Math. Soc. (JEMS)}, 13(4):1005--1061, 2011.

\bibitem[HRK19]{HruRid}
Ehud Hrushovski and Silvain Rideau-Kikuchi.
\newblock Valued fields, metastable groups.
\newblock {\em Selecta Math. (N.S.)}, 25(3):Paper No. 47, 58, 2019.

\bibitem[Hru87]{Hr4}
Ehud Hrushovski.
\newblock Locally modular regular types.
\newblock In {\em Classification theory ({C}hicago, {IL}, 1985)}, volume 1292 of {\em Lecture Notes in Math.}, pages 132--164. Springer, Berlin, 1987.

\bibitem[Hru92]{HR2}
Ehud Hrushovski.
\newblock {Strongly minimal expansions of algebraically closed fields}.
\newblock {\em Israel Journal of Mathematics}, 79:129--151, 1992.

\bibitem[HS17]{HS}
Assaf Hasson and Dmitry Sustretov.
\newblock Incidence systems on cartesian powers of algebraic curves.
\newblock {\em arXiv preprint arXiv:1702.05554}, 2017.

\bibitem[Joh22]{JohCminimalexist}
Will Johnson.
\newblock A criterion for uniform finiteness in the imaginary sorts.
\newblock {\em Arch. Math. Logic}, 61(3-4):583--589, 2022.

\bibitem[KR16]{KR}
Piotr Kowalski and Serge Randriambololona.
\newblock Strongly minimal reducts of valued fields.
\newblock {\em The Journal of Symbolic Logic}, 81(2):510--523, 2016.

\bibitem[Lip93]{Lip}
L.~Lipshitz.
\newblock Rigid subanalytic sets.
\newblock {\em Amer. J. Math.}, 115(1):77--108, 1993.

\bibitem[Mar06]{marker}
David Marker.
\newblock {\em Model theory: an introduction}, volume 217.
\newblock Springer Science \& Business Media, 2006.

\bibitem[MOS20]{montenegroOnshuus}
Samaria Montenegro, Alf Onshuus, and Pierre Simon.
\newblock Stabilizers,-generics, and prc fields.
\newblock {\em Journal of the Institute of Mathematics of Jussieu}, 19(3):821--853, 2020.

\bibitem[MP90]{MaPi}
D.~Marker and A.~Pillay.
\newblock Reducts of {$({\bf C},+,\cdot)$} which contain {$+$}.
\newblock {\em J. Symbolic Logic}, 55(3):1243--1251, 1990.

\bibitem[Rab93]{rabinovich}
Evgenia~D Rabinovich.
\newblock {\em Definability of a field in sufficiently rich incidence systems}, volume~14.
\newblock School of Mathematical Sciences, Queen Mary and Westfield College, 1993.

\bibitem[Sil09]{Silverman}
Joseph~H. Silverman.
\newblock {\em The arithmetic of elliptic curves}, volume 106 of {\em Graduate Texts in Mathematics}.
\newblock Springer, Dordrecht, second edition, 2009.

\bibitem[Sim15]{Si}
Pierre Simon.
\newblock {\em A guide to {NIP} theories}, volume~44 of {\em Lecture Notes in Logic}.
\newblock Association for Symbolic Logic, Chicago, IL; Cambridge Scientific Publishers, Cambridge, 2015.

\bibitem[SW19]{SimWal}
Pierre Simon and Erik Walsberg.
\newblock Tame topology over dp-minimal structures.
\newblock {\em Notre Dame J. Form. Log.}, 60(1):61--76, 2019.

\bibitem[Wag93]{wagon}
Stan Wagon.
\newblock {\em The Banach-Tarski Paradox}.
\newblock Number~24. Cambridge University Press, 1993.

\end{thebibliography}
		\end{document}